\DeclareMathAlphabet{\dutchcal}{U}{dutchcal}{m}{n}
\newcommand{\dcu}{\dutchcal{u}}
\newcommand{\dcv}{\dutchcal{v}}
\newcommand{\dcw}{\dutchcal{w}}
\newcolumntype{L}[1]{>{\raggedright\let\newline\\\arraybackslash\hspace{0pt}}m{#1}}
\newcolumntype{C}[1]{>{\centering\let\newline\\\arraybackslash\hspace{0pt}}m{#1}}
\newcolumntype{R}[1]{>{\raggedleft\let\newline\\\arraybackslash\hspace{0pt}}m{#1}}
\newtheoremstyle{theoremstyle}
{10pt}      %
{5pt}       %
{\itshape}  %
{}          %
{\bfseries} %
{}         %
{ }      %
{}          %
\newtheoremstyle{algorithmstyle}
{10pt}      %
{5pt}       %
{}  %
{}          %
{\bfseries} %
{}         %
{ }      %
{}          %
\newtheoremstyle{examplestyle}
{10pt}      %
{5pt}       %
{}          %
{}          %
{\bfseries} %
{}         %
{ }      %
{}          %
\newtheorem*{rep@theorem}{\rep@title}
\newcommand{\newreptheorem}[2]{%
\newenvironment{rep#1}[1]{%
 \def\rep@title{#2 \ref{##1}}%
 \begin{rep@theorem}}%
 {\end{rep@theorem}}}
\newcommand{\subalign}[1]{%
  \vcenter{%
    \Let@ \restore@math@cr \default@tag
    \baselineskip\fontdimen10 \scriptfont\tw@
    \advance\baselineskip\fontdimen12 \scriptfont\tw@
    \lineskip\thr@@\fontdimen8 \scriptfont\thr@@
    \lineskiplimit\lineskip
    \ialign{\hfil$\m@th\scriptstyle##$&$\m@th\scriptstyle{}##$\hfil\crcr
      #1\crcr
    }%
  }%
}
\theoremstyle{theoremstyle}
\newtheorem{theorem}{Theorem}[section]
\newtheorem{lemma}[theorem]{Lemma}
\newtheorem{proposition}[theorem]{Proposition}
\newtheorem{corollary}[theorem]{Corollary}
\theoremstyle{examplestyle}
\newtheorem{example}[theorem]{Example}
\newtheorem{definition}[theorem]{Definition}
\newtheorem{notation}[theorem]{Notation}
\newtheorem*{notation*}{Notation}
\newtheorem{remark}[theorem]{Remark}
\newtheorem{assumption}[theorem]{Assumption}
\theoremstyle{algorithmstyle}
\newcommand{\CC}{\mathbb{C}}
\newcommand{\RR}{\mathbb{R}}
\newcommand{\ZZ}{\mathbb{Z}}
\newcommand{\PP}{\mathbb{P}}
\newcommand{\suchthat}{\;\ifnum\currentgrouptype=16 \middle\fi|\;}
\newcommand{\bigmid}{\left.\vphantom{\Big\{} \suchthat \vphantom{\Big\}}\right.}
\newcommand{\bigslant}[2]{{\raisebox{.2em}{$#1$}\left/\raisebox{-.2em}{$#2$}\right.}}
\newcommand{\an}{{\mathrm{an}}}
\newcommand{\lin}{{\mathrm{lin}}}
\newcommand{\nlin}{{\mathrm{nlin}}}
\newcommand{\st}{{\mathrm{st}}}
\DeclareMathOperator{\codim}{codim}
\DeclareMathOperator{\Conv}{Conv}
\DeclareMathOperator{\initial}{in}
\DeclareMathOperator{\mult}{mult}
\DeclareMathOperator{\MV}{MV}
\DeclareMathOperator{\rec}{rec}
\DeclareMathOperator{\Span}{Span}
\DeclareMathOperator{\Spec}{Spec}
\DeclareMathOperator{\Star}{Star}
\DeclareMathOperator{\trop}{trop}
\DeclareMathOperator{\Trop}{trop}
\DeclareMathOperator{\val}{val}
\newcommand\restr[2]{{\left.\kern-\nulldelimiterspace #1 \right|_{#2}}}
\newcommand{\oset}[3][0ex]{%
  \mathrel{\mathop{#3}\limits^{
    \vbox to#1{\kern-2\ex@
    \hbox{$\scriptstyle#2$}\vss}}}}
\newcommand{\uset}[3][0ex]{%
  \mathrel{\mathop{#3}\limits_{
    \vbox to#1{\kern-7\ex@
    \hbox{$\scriptstyle#2$}\vss}}}}
\newcommand{\customlabel}[2]{%
   \protected@write \@auxout {}{\string \newlabel {#1}{{#2}{\thepage}{#2}{#1}{}} }%
   \hypertarget{#1}{#2}%
}
\begin{document}

\title[Generic root counts and flatness in tropical geometry]{Generic root counts and flatness\\ in tropical geometry} %

\author{Paul Alexander Helminck}
\address{Mathematical Institute, Tohoku University, Japan.
}
\email{paul.helminck.a6@tohoku.ac.jp}
\urladdr{https://paulhelminck.wordpress.com}
\author{Yue Ren}
\address{Department of Mathematics, Durham University, United Kingdom.}
\email{yue.ren2@durham.ac.uk}
\urladdr{https://yueren.de}

\subjclass[2020]{14T90,14M25,13P15}

\date{\today}

\keywords{tropical geometry, non-archimedean geometry, Berkovich analytifications, families of polynomial equations, root counts, tropical intersection numbers.}

\begin{abstract}
  We use tropical and non-archimedean geometry to study the generic number of solutions of families of polynomial equations over a parameter space $Y$. In particular, we are interested in the choices of parameters for which the generic root count is attained.
  Our families are given as subschemes $X\subseteq T$ where $T$ is a relative torus over $Y$.
  We generalize Bernstein's theorem from an intersecting family of hypersurfaces $X=V(f_1)\cap\dots\cap V(f_n)$ to an intersecting family of higher-codimensional schemes $X=X_1\cap\dots\cap X_k$, replacing the mixed volume by a tropical intersection product.
  Central to our work is the notion of tropical flatness of $X$ around a point $P\in Y$, which allows us to transfer tropical properties of the  fiber over $P$ to generic properties.  We show that tropical flatness holds over a dense open subset of the Berkovich analytification $Y^\an$, and that the tropical intersection number is attained as a root count at all $P\in Y^\an$ around which the $X_i$'s are tropically flat and the tropical prevariety of the fibers $\bigcap_{i=1}^k\Trop(X_{i,P})$ is bounded.
  We then study the generic root count of a wide class of parametrized square polynomial systems. This in particular gives tropical formulas for the volumes of Newton-Okounkov bodies, and the number of complex steady states of chemical reaction networks.
\end{abstract}

\maketitle

\section{Introduction}\label{sec:Introduction}
Consider a parametrized family of polynomial equations over an algebraically closed field, such as
\begin{equation}
  \label{eq:introSystem2}
  a_{1} x^3 +a_{2}x+ a_{3} y^2 = 0 \quad\text{and}\quad b_{1} x^3+b_{2} y^2+b_{3}=0
\end{equation}
over $\CC$ with variables $x,y$ and parameters $a_{i},b_{i}$ for $1\leq{i}\leq{3}$. It has $6$ solutions for a generic choice of parameters, i.e., for $a_{i}$ and $b_{i}$ in a dense open subset of $\mathbb{C}^{6}\subseteq \Spec(\mathbb{C}[a_{1},a_{2},a_{3},b_{1},b_{2},b_{3}])\eqqcolon Y$.  We refer to that number as its \emph{generic root count}, and it is the main object of interest for this paper. Two important examples of such parametrized families come from the theory of Newton-Okounkov bodies \cite{KavehKhovanskii2010,KavehKhovanskii2012} and chemical reaction networks \cite{Dickenstein16}. These systems can be obtained from generic systems as in \cref{eq:introSystem2} by imposing a set of \emph{algebraic relations} on the parameters, which we view as a Zariski closed subspace $Z$ of the parameter space $Y$. By restriction, every irreducible closed subspace $Z\subseteq Y$ gives a new parametrized system with its own (restricted) generic root count. For instance, the linear relations $a_{1}=a_{2}=b_{1}$ and $a_{3}=b_{2}=b_{3}$ give the Zariski closed subspace $Z=V(a_{1}-a_{2},a_{2}-b_{1},a_{3}-b_{2},b_{2}-b_{3})$ of $Y$, and the generic root count of the system in \cref{eq:introSystem2} over $Z$ is $2$. Establishing explicit combinatorial formulas for these generic root counts is one of the main goals of this paper.

These parametrized families of polynomial equations are ubiquitous in mathematics and beyond. They describe the $27$ lines on a smooth cubic surface, the dynamics of the Wnt signaling pathway \cite{GHRS16}, and the motion of the Gough-Steward platform \cite{SW05}. Many applications require solving polynomial systems, which can for example be done using homotopy continuation. For a single solution, Lairez has shown this to be possible in average polynomial time \cite{Lairez17}, thus solving Smale's 17th problem. If all solutions are needed, a major difficulty is predicting the number of solutions in the first place, which can be very high in theory but is often surprisingly small in practice. This forms one of the main motivations for finding combinatorial formulas for generic root counts in this paper.

An archetypal example of such a formula is given by \emph{B\'{e}zout's theorem}, which states that $n$ polynomials in $n$ variables of degrees $d_{1},\dots,d_{n}\in\ZZ_{\geq 0}$ over an algebraically closed field $K$ generically have $\prod_{i=1}^n d_{i}$ solutions in $K^{n}\subseteq \mathbb{A}^{n}_{K}=$ \linebreak $\Spec(K[x_{1},\dots,x_{n}])$.  For example, two bivariate polynomials of degree $3$ generically have $9$ solutions, which shows that generic instances of System~\eqref{eq:introSystem2} are not generic in the sense of B\'ezout.

Another example is the \emph{Bernstein-Kushnirenko theorem} \cite{Bernstein1975,Kushnirenko1976}, which states that $n$ Laurent polynomials in $n$ variables with fixed monomial supports $S_{1}, \dots, S_{n}\subseteq \mathbb{Z}^{n}$ generically have as many solutions in the torus $(K^\ast)^n\subseteq $\linebreak $\Spec(K[x_{1}^{\pm},\dots,x_{n}^{\pm}])$ as the (normalized) mixed volume of their Newton polytopes $\Conv(S_1), \dots, \Conv(S_n)\subseteq\RR^n$.  For example, the two bivariate polynomials in \cref{eq:introSystem2} with monomial supports $S_1=\{(2,0),(0,2),(1,0)\}$ and $S_1=\{(2,0), (0,2),$ $(0,0)\}$ generically have $6$ solutions, which is the mixed volume of their corresponding Newton polytopes, see \cref{fig:IntroExample}.

\begin{figure}[ht]
\centering
\begin{tikzpicture}
  \node (mixedSubdivision) at (-3.5,0)
  {
    \begin{tikzpicture}
      \fill[blue!20]
      (1,0) -- (3,0) -- (0,2) -- cycle;
      \fill[red!20]
      (3,0) -- (6,0) -- (3,2) -- cycle;
      \draw[blue!50!black,thick]
      (1,0) -- (3,0) -- (0,2) -- cycle
      (3,2) -- (0,4);
      \draw[red!50!black,thick]
      (3,0) -- (6,0) -- (3,2) -- cycle
      (0,2) -- (0,4);
      \fill
      (1,0) circle (2pt)
      (3,0) circle (2pt)
      (6,0) circle (2pt)
      (0,2) circle (2pt)
      (3,2) circle (2pt)
      (0,4) circle (2pt);
    \end{tikzpicture}
  };

  \node (tropicalIntersection) at (3.5,0)
  {
    \begin{tikzpicture}
      \coordinate (v1) at (-2,-1);
      \node[opacity=0.2,xshift=0.5mm,yshift=1mm] at (v1)
      {
        \begin{tikzpicture}[scale=0.3]
          \fill[blue]
          (1,0) -- (3,0) -- (0,2) -- cycle;
        \end{tikzpicture}
      };
      \draw[blue!50!black,thick]
      (v1) -- ++(2,3)
      (v1) -- ++(0,-1) node[right,yshift=2mm,font=\footnotesize] {$2$}
      (v1) -- ++(-2,-1);
      \fill[blue!50!black] (v1) circle (2pt);
      \coordinate (v2) at (0,0);
      \node[opacity=0.2,xshift=2mm,yshift=1mm] at (v2)
      {
        \begin{tikzpicture}[scale=0.3]
          \fill[red]
          (3,0) -- (6,0) -- (3,2) -- cycle;
        \end{tikzpicture}
      };
      \draw[red!50!black,thick]
      (v2) -- ++(1.333,2)
      (v2) -- ++(0,-2) node[right,yshift=2mm,font=\footnotesize] {$3$}
      (v2) -- ++(-4,0) node[above,xshift=2mm,font=\footnotesize] {$2$};
      \fill[red!50!black] (v2) circle (2pt);
      \fill[draw=black,fill=white] (-1.333,0) circle (3pt);
      \node at (-1.333,0)
      {
        \begin{tikzpicture}[scale=0.3]
          \draw[blue!50!black,densely dotted]
          (3,0) -- (0,2)
          (3,2) -- (0,4);
          \draw[red!50!black,densely dotted]
          (3,0) -- (3,2)
          (0,2) -- (0,4);
        \end{tikzpicture}
      };
      \node[anchor=south east,yshift=6mm,font=\scriptsize] at (-1.333,0)
      {
        $\textcolor{red!50!black}{2}\cdot \left|\det \left(
         \begin{smallmatrix}
             \textcolor{red!50!black}{-1} & \textcolor{blue!50!black}{2} \\
              \textcolor{red!50!black}{0} & \textcolor{blue!50!black}{3}
         \end{smallmatrix}\right)\right|$
      };
    \end{tikzpicture}
  };
\end{tikzpicture}
\caption{A mixed subdivision of the Minkowski sum of the Newton polytopes of $f_{1}$ and $f_{2}$. The white cell is the mixed cell, which has normalized volume $6$. This is also the tropical intersection number of the two dual tropical curves, which intersect in one point with multiplicity $6$. The integers next to the edges are their tropical multiplicities. }
\label{fig:IntroExample}
\end{figure}

A third prominent example can be found in the works of Kaveh and Khovanskii \cite{KavehKhovanskii2010,KavehKhovanskii2012}, who consider polynomials with fixed polynomial supports. Their generic root count is the birational intersection index, and in special cases it is the volume of the associated \emph{Newton-Okounkov body}. For instance, if we impose the linear relations $a_{1}=a_{2}$ and $b_{2}=b_{3}$ on the parametrized  System~\eqref{eq:introSystem2}, then we obtain two generic bivariate polynomials with polynomial support $\{x^2+x,y^2\}$ and $\{x^2,y^2+1\}$. In this case, the generic root count is the same as the one we obtain from the Bernstein-Kushnirenko theorem: $6$. However, if we further impose the linear relations $a_{1}=b_{1}$ and $a_{2}=b_{2}$ to go to the closed subspace $Z$, then the root count drops to $2$, so that it is {not} generic in the sense of Kaveh-Khovanskii.

This paper is a continuation of the aforementioned works, and our main goal is to obtain new formulas for generic root counts of more general classes of parametrized systems, including System~\eqref{eq:introSystem2} for linear subspaces of $Y$, but also systems arising in applications such as chemical reaction networks \cite{Dickenstein16}.  Our main tool is tropical geometry, which studies piecewise-linear objects arising from polynomial equations, and our formulas are given in terms of tropical intersection numbers.
One can regard our work as a generalization of the Bernstein-Kushnirenko-Theorem, as the mixed volume is a tropical intersection number of hypersurfaces \cite[Theorem 4.6.8]{MS15}.  The difference is that we make essential use of tropicalizations of higher codimension, relying on results by %
Osserman and Payne \cite{OssermanPayne2013} and Osserman and Rabinoff \cite{OR2013}.

Our general set-up is as follows:  Let $Y=\Spec(A)$ be a parameter space, which is integral and of finite type over a field $K$, let $T\coloneqq \Spec(A[x_{1}^{\pm},\dots,x_{n}^{\pm}]) \rightarrow Y$ be a relative torus over $Y$, and consider a set of closed subschemes $X_{i}\subseteq T$. One can informally view  the $X_{i}$ as families of closed subschemes of $\Spec(K[x_{1},\dots,x_{n}])$ parametrized over $Y$.  Let $X=\bigcap_i X_i$ be their intersection, which we assume to be generically zero-dimensional, although our techniques can also show that specific schemes are generically zero-dimensional.
For System~\eqref{eq:introSystem2}, the above translates to the following. We have %
the coordinate ring $A=\mathbb{C}[a_{1},a_{2},a_{3},b_{1},b_{2},b_{3}]$ with parameter space $Y=\Spec(A)$, %
the relative torus $T=\Spec(A[x^{\pm},y^{\pm}])$, and the two subschemes $X_1,X_2$ given by the two equations.  

There are two \emph{key questions} we wish to address in this paper:

\medskip

\begin{description}[leftmargin=5mm]
\item[Key Question 1]\label{enumitem:Question1}
  For which choices of parameters $P\in Y(L)$ with values in a non-trivially valued field extension $K\to L$ %
  is the generic root count of $X$ equal to the tropical intersection number of the fibers $X_{i,P}$?
\item[Key Question \customlabel{enumitem:Question2}{2}]
  With a view towards applications, which systems of hypersurfaces $X_i$ can be re-embedded so that the answer to Question \ref{enumitem:Question1} applies?
\end{description}

\medskip

We discuss our answers for these questions in Sections \ref{sec:IntroKQ1} and \ref{sec:IntroKQ2}, together with where they can be found in the paper. We have made an effort so that the corresponding passages can be read independently.

\subsection{Specialization and generization for tropical varieties}\label{sec:IntroKQ1}
Question~\ref{enumitem:Question1} is answered in Sections \ref{sec:FiberwiseTropicalizations} and \ref{sec:TropGeneric}.  We use the language of Berkovich spaces, as it offers a natural framework in which one can tropicalize over non-classical points of the parameter space $Y$.
Moreover, some of the technical results we require are only found in Berkovich theory, and many applications naturally have analytic parameter spaces, see \cref{rem:AnalyticFamilies}.

Our main answer is \cref{thm:MainThm1} in \cref{sec:GRCTropicalIntersectionNumber}, which states that the generic root count of $X$ equals the tropical intersection product of the $X_{i,P}$'s under two conditions:
\begin{enumerate}
\item the intersection of the tropicalizations of the $X_{i,P}$'s is bounded,
\item the $X_i$'s are \emph{tropically flat} around $P$.
\end{enumerate}
The boundedness of the intersection is a weaker form of transversality, and we have already seen that some sort of transversality condition may be necessary. For example, on the closed subspace $Z\subseteq Y$ considered before, the generic root count of System~\eqref{eq:introSystem2} does not equal the tropical intersection number of the two hypersurfaces, and, by the Transverse Intersection Theorem~\cite[Theorem~3.4.12]{MS15}, this means that the tropicalizations of the two hypersurfaces cannot intersect transversally.

Tropical flatness is the main topic of \cref{sec:FiberwiseTropicalizations}, and we use it in this paper to connect properties of various fibers, as in classical algebraic geometry. %
Informally, a family of schemes $X\to Y$ is flat around a point $P\in Y$ if the fibers vary nicely around $P$.  This is for instance exemplified by \cite[Chapter III, Corollary 9.10]{Hartshorne1977}, which says that the Hilbert polynomials of the fibers of a closed subscheme of relative projective space are the same if a family is flat. Similarly, if $X$ is tropically flat around a point $P\in Y$, then the tropicalizations of the fibers %
vary nicely around $P$.  As before, this implies that information which is preserved under tropicalization also varies nicely around $P$. This includes invariants such as the dimension, but also quantities such as tropical intersection numbers that can be used to obtain generic root counts.

The following examples illustrate the points at which the family is tropically flat in two important cases:
\begin{enumerate}
\item If $X_{i}=V(f)$ is a parametrized hypersurface, then it is tropically flat over all parameters for which no coefficient of $f$ vanishes, see \cref{cor:TropConstant}.  The two parametrized hypersurfaces of System~\eqref{eq:introSystem2} for instance are tropically flat over all parameters in the torus $(K^\ast)^2$.
\item If $X_i$ is a parametrized linear space, then it is tropically flat at all parameters around which its matroid does not change, see \cref{lem:FlatnessMatroids}.
\end{enumerate}
Under our assumptions on $Y$, we show in \cref{thm:TropicalGenericFlatness2} that any $X_{i}$ is tropically flat over a dense open subset of the parameter space, giving an analogue of Grothendieck's generic flatness theorem \cite[Th\'{e}or\`{e}me 6.9.1]{EGA42} \cite[\href{https://stacks.math.columbia.edu/tag/052A}{Proposition 052A}]{stacks-project}. Conversely, if $X_{i}$ is tropically flat at $P$, then many properties of $X_{i,P}$ that are preserved in its tropicalization hold for $X_i$ generically.
  For example, \cref{pro:DimensionsTransverseIntersections} shows that the dimension of $X_{i,P}$ is the generic dimension of $X_i$, and \cref{thm:MainThm1} shows that the tropical intersection number of the $X_{i,P}$ is the generic root count of $X$. %
The necessity of tropical flatness for \cref{thm:MainThm1} is illustrated in Examples
\ref{exa:TropicallyFlatIllustration} and \ref{ex:nonSquare}.

Finally, we introduce the notions of \emph{torus-equivariance} and \emph{parametric independence} in \cref{sec:TorusEquivariant}.  Together, they provide a simple criterion for the existence of transverse intersections.  To be precise, in a torus-equivariant family $X_i$, one can translate the fibers $X_{i,P}$ torically using a torus action on the parameter space. Tropically, this means that we can freely translate the $\trop(X_{i,P})$'s by adjusting the parameter $P$.  If the $X_i$'s are furthermore parametrically independent, then these translations can be done independent of each other.  Combining both notions yields an easy criterion for which \cref{thm:MainThm1} holds.

\subsection{Generic root counts of square systems}\label{sec:IntroKQ2}
A partial answer to Question~\ref{enumitem:Question2} can be found in Sections \ref{sec:modifications} and \ref{sec:linearDependencies}.  Systems that we discuss in-depth include  generalizations of the systems studied by Kaveh and Khovanskii, and many systems arising from applications such as chemical reaction networks, the Kuramoto model, and Duffing oscillators.

In \cref{sec:modifications}, we introduce the notion of a \emph{tropically rectifiable} square system, which are systems that can be reembedded to produce transverse intersection.   These reembeddings are also called \emph{tropical modifications}, and they are commonly used to ``repair'' tropicalizations \cite{CuetoMarkwig2016}.  We prove in \cref{thm:GRCModification} that the resulting tropical intersection number $\trop(\hat X_{\lin,P})\cdot \trop(\hat X_{\nlin,P})$ for generic $P$ equals the generic root count in an open set of the torus $T$.  The open subset arises from intrinsic obstructions similar to those in the works of Kaveh and Khovanskii \cite{KavehKhovanskii2010,KavehKhovanskii2012}.  We then provide descriptions of said open set and when it is equal to $T$. Moreover, we show that $P$ is generic if the matroid of $\hat X_{\lin,P}$ is generic.  Using a result by Jensen and Yu \cite[Corollary 5.2]{AndersJosephine2016}, this gives a decomposition of the generic root count into mixed volumes.

In \cref{sec:linearDependencies}, we discuss linearly parametrized systems and simplify the results from \cref{sec:modifications} for two classes of parametrized systems.

In \cref{sec:verticalDependencies}, we focus on \emph{systems with vertical parameter dependencies}, or \emph{vertical systems} for short, which for example arise from the steady state equations of chemical reaction networks. In \cref{prop:verticalDependenciesRootCount}, we express their generic root count as the tropical intersection product of a tropical linear space and a tropicalized binomial variety.

In \cref{sec:horizontalDependencies}, we focus on \emph{systems with horizontal parameter dependencies}, or \emph{horizontal systems} for short, which for example include systems studied by Kaveh and Khovanskii. In \cref{prop:horizontalDependenciesRootCount}, we express their generic root count of a tropical variety dependent on the polynomial support and tropical hyperplanes.  This in particular gives a formula for the birational intersection indices from \cite{KavehKhovanskii2010}, and hence also for the volume of Newton Okounkov bodies from \cite{KavehKhovanskii2012}, in terms of tropical intersection numbers.  We demonstrate our technique in three examples:  the stationary equations of the Kuramoto model \cite{Kuramoto2019}, Duffing oscillators \cite{BMMT2022} and steady-state equations of chemical reaction networks \cite{Dickenstein16}.

\subsection*{Acknowledgments}
The authors would like to thank the anonymous referees for %
carefully reading an initial version of this paper and for providing numerous comments that greatly helped in improving the exposition and readability.  
The authors would like to thank Mateusz Michałek and Tianran Chen for providing valuable feedback and examples. The authors would like to thank Alex Fink and Felipe Rinc\'on for helpful discussions on tropical linear spaces. The authors would further like to thank the participants of the Durham tropical geometry seminar, Jeffrey Giansiracusa, James Maxwell, and Stefano Mereta, for many productive discussions. Both authors are supported by UK Research and Innovation under the Future Leaders Fellowship program (MR/S034463/2). The first author is moreover supported by a JSPS Postdoctoral Fellowship for Research in Japan (ID No P23769) and KAKENHI 23KF0187 as a Postdoctoral Fellow at Tohoku University and the University of Tsukuba.

\section{Preliminaries}\label{sec:preliminaries}
In this section, we briefly review some basic concepts and fix our notation.  In particular, we define generic root counts, fiberwise tropicalizations and local tropical bases.

\subsection{Generic properties}\label{sec:genericProperties}
In this section, we fix the main setting of our paper and introduce the main properties of interest.

\begin{notation}\label{con:mainConvention}
  For the remainder of the paper, let $K$ be an algebraically closed field with a non-archimedean absolute value $|\cdot|_K\colon K\rightarrow\RR_{\geq 0}$.

  Let $A$ be a $K$-algebra of finite type and let $Y=\Spec(A)$ be its associated scheme.  We will refer to $A$ as the \emph{parameter ring} and $Y$ as the \emph{parameter space}.  We will assume $Y$ to be integral and use $\eta$ to denote its unique generic point.  Moreover, abbreviating $A[x^\pm]\coloneqq A[x_1^\pm,\dots,x_n^\pm]$, let $T\coloneqq \Spec(A[x^\pm])$ be an $n$-dimensional torus over $Y$, and denote the projection by $p\colon T\rightarrow Y$. If $P\in Y$, then we write $k(P)$ for the residue field of $P$. There is a natural ring homomorphism $A\to k(P)$ and for $f\in{A}$, we write $f(P)\in k(P)$ for the image of $f$ under this homomorphism.

  Let $B=A[x^\pm]/I$ for some ideal $I\subseteq A[x^\pm]$. We identify $X\coloneqq \Spec(B)$ with a closed subspace of $T=\Spec(A[x^{\pm}])$ through the closed immersion induced by the natural ring homomorphism $A[x^{\pm}]\to A[x^{\pm}]/I$. We will not distinguish between $X$ and its image in $T$.  %
  By composing the inclusion $X\to T$ with $p$, we obtain a morphism $p_{X}\colon X\to Y$, and we will often abbreviate $p=p_X$ if the context is clear.
  For a point $P\in Y$, we denote $A[x^\pm]_P\coloneqq A[x^\pm]\otimes_K k(P)$ and $T_P\coloneqq \Spec(A[x^\pm]_P)$ as well as $B_P\coloneqq B\otimes_K k(P)$ and $X_P\coloneqq \Spec(B_P)$. We refer to $X_P$ as the \emph{specialization} of $X$ at $P$.
\end{notation}

\begin{definition}
  The \emph{root count} of $X$ at $P\in Y$, denoted by $\ell_{X,P}\in\ZZ_{\geq 0}\cup\{\infty\}$, is the $k(P)$-vector space dimension  of $B_{P}$.
  The \emph{generic root count} of $X$ is the root count $\ell_{X,\eta}$ at the generic point $\eta\in Y$.  We say $X$ is \emph{generically finite} if $\ell_{X,\eta}<\infty$.
\end{definition}

\begin{example}\label{ex:runningExampleGenericRootCount}
  Let $K=\mathbb{C}\{\!\{t\}\!\}$ be the field of complex Puiseux series, $Y=\mathrm{Spec}(A)$ for $A=K[a_1,a_2,a_3,b_1,b_2,b_3,c_1,c_2]$, and $T=\mathrm{Spec}(A[x^{\pm},y^{\pm},z^{\pm}])$.  Consider the subscheme $X \subseteq T$  given by the ideal
  \begin{equation*}
    I\coloneqq \big( a_1x^2+a_2y^2+a_3y,\; b_1x^2+b_2y^2+b_3z,\; c_1z+c_2 \big).
  \end{equation*}

  One can show that the generic root count $\ell_{X,\eta}$ is $4$, while over $P\in Y$ with $(a_1b_2-a_2b_1)(P)=0$ the root count $\ell_{X,P}$ drops to $2$.  Over $P\in Y$ with $c_1(P)=0$, $c_2(P)=0$, or $a_1(P)=0=a_2(P)$ the root counts drops to $0$.
\end{example}

Besides generic root counts, other important generic properties of $X$ are:

\begin{definition}
  \label{def:genericProperties}
  We say that $X$ is
  \begin{enumerate}
  \item \emph{generically Cohen-Macaulay}, if $X_\eta$ is Cohen-Macaulay,
  \item \emph{generically pure}, if $X_\eta$ is pure,
  \item \emph{generically $d$-dimensional}, if $X_\eta$ is $d$-dimensional,
  \item \emph{generically $k$-codimensional}, if $X_\eta$ is $k$-codimensional in $T_\eta$ or equivalently $X$ is generically $(n-k)$-dimensional.
  \end{enumerate}
\end{definition}

While all generic properties are defined via the generic fiber $X_\eta$, note that they indeed reflect the behavior of $X$ over a dense open subset of $Y$.

\begin{lemma}
  \label{lem:genericRootCount}
  Let $X$ be generically finite with generic root count $\ell_{X,\eta}=k$. Then there is a dense open subset $U\subseteq Y$ such that $\ell_{X,P}=k$ for all $P\in{U}$.
\end{lemma}
\begin{proof}
  Let $A$ and $B$ be the coordinate rings of $Y$ and $X$ respectively. By Grothendieck's generic freeness theorem \cite[\href{https://stacks.math.columbia.edu/tag/051S}{Lemma 051S}]{stacks-project}, we can find an $f\in{A}$ such that $B_{f}\cong A^{k}_{f}$. Here $B_{f}$ and $A_{f}$ are the localizations of $B$ and $A$ with respect to $f$. By taking $U=D(f)$, we then directly find the desired result.
\end{proof}

\begin{lemma}\label{lem:GenericDimensions}
  If $X$ has generic dimension $d$, then there is a dense open subset $U\subseteq Y$ such that $X_P$ has dimension $d$ for all $P\in U$. Similarly, if $X$ has generic codimension $k$, then there is a dense open subset $U\subseteq Y$ such that $X_P$ is of codimension $k$ for all $P\in U$.
\end{lemma}
\begin{proof}
    This follows from \cite[\href{https://stacks.math.columbia.edu/tag/05F7}{Lemma 05F7}]{stacks-project}.
\end{proof}

\begin{lemma}\label{lem:GenericallyCM}
  If $X$ is generically Cohen-Macaulay, then there is a dense open subset $U\subseteq{Y}$ such that the restricted morphism $p_X|_{p_X^{-1}(U)}\colon p_X^{-1}(U)\to{U}$ is Cohen-Macaulay.
\end{lemma}
\begin{proof}
  We can assume by generic flatness that $p$ is flat. Consider the open subset $W$ from \cite[\href{https://stacks.math.columbia.edu/tag/045U}{Lemma 045U}]{stacks-project}. Its complement $Z=X\backslash{W}$ is closed and the fiber of $Z\to{Y}$ over the generic point is empty by assumption. This gives an open set $U_{\eta}$ containing $\eta$ for which  $p^{-1}(U_{\eta})\cap{Z}=\emptyset$ by \cite[\href{https://stacks.math.columbia.edu/tag/02NE}{Lemma 02NE}]{stacks-project}. The induced morphism $p^{-1}(U_{\eta})\to{U_{\eta}}$ is automatically Cohen-Macaulay.
\end{proof}

\begin{lemma}\label{lem:LemmaPurity}
  If $X$ is generically pure, then there is a dense open subset $U\subseteq{Y}$ such that $X_P$ is pure for all $P\in Y$.
\end{lemma}
\begin{proof}
  We follow the proof of \cite[\href{https://stacks.math.columbia.edu/tag/055A}{Lemma 055A}]{stacks-project}, where the notation $f$ is used for our morphism $p$. %
  By \cite[\href{https://stacks.math.columbia.edu/tag/0551}{Lemma 0551}]{stacks-project}, we can find a dense open subset $V\subseteq Y$ and a surjective finite \'{e}tale map $g\colon Y'\to V$ with induced commutative diagram
  \begin{equation*}
    \begin{tikzcd}
      X'=X\times_{V}Y' \arrow{r}{g'} \arrow{d}{p'}  & X_{V}=V\times_{Y}X \arrow{d}  \arrow{r} & X \arrow{d}{p}\\
      Y' \arrow{r}{g}& V \arrow{r}{} & Y
    \end{tikzcd}
 \end{equation*}
  such that:
  \begin{itemize}
  \item The squares are Cartesian.
  \item $Y'$ is irreducible and affine.
  \item The morphisms $g$ and $g'$ are surjective finite \'{e}tale.
  \item All irreducible components of the generic fiber of $p'$ are geometrically irreducible.
  \end{itemize}
  There is one point in $Y'$ lying over $y$, which we denote by $y'$.
  As in the proof of \cite[\href{https://stacks.math.columbia.edu/tag/055A}{Lemma 055A}]{stacks-project}, we may assume that the number of geometrically irreducible components $X'_{i,y'}$ of the fibers $X'_{y'}$ is constant over a dense open $V'\subseteq Y'$.  These components are necessarily pure of dimension $d$ again, see \cite[\href{https://stacks.math.columbia.edu/tag/04KX}{Lemma 04KX}]{stacks-project} and \cite[\href{https://stacks.math.columbia.edu/tag/07NB}{Section 07NB}]{stacks-project}. We set $Y'=V'$ and $V=g(V')$, which is again open since \'{e}tale maps are open.

  Let $X'_{i}$ be the closure of $X'_{i,y}$ in $X'$.  The proof of \cite[\href{https://stacks.math.columbia.edu/tag/055A}{Lemma 055A}]{stacks-project} then shows that we can find an open $V'\subseteq{Y'}$ such that the fibers of the $X'_{i}$'s over $V'$ are geometrically irreducible and $X'=\bigcup_i X'_{i}$ over $V'$. We can furthermore assume by shrinking $V'$ that all the fibers of the $X'_{i}$'s are of dimension $d$ by \cite[\href{https://stacks.math.columbia.edu/tag/05F7}{Lemma 05F7}]{stacks-project}. We again set $Y'=V'$ and $V=g(V')$.

  We now have a morphism $g\colon Y'\to V$ such that the base change $p'$ of $p$ is relatively pure of dimension $d$. Consider the proof of \cite[\href{https://stacks.math.columbia.edu/tag/0556}{Lemma 0556}]{stacks-project}, giving a bijection between the geometrically irreducible components of $p$ over $V$ and $p'$ over $V'$. Under this bijection the Krull dimensions of the irreducible components are not changed, since they are obtained by base change over a field extension. We thus obtain the desired result.
\end{proof}

\begin{remark}
    We note here that generically Cohen-Macaulay morphisms are almost generically pure. %
Namely, if $X_{\eta}$ is %
Cohen-Macaulay, then there exist open and closed subschemes $X_{\eta,i}$ with $\bigsqcup_{i=0}^{r} X_{\eta,i}=X_{\eta}$ such that the $X_{\eta,i}$ are pure of dimension $i$ by \cite[\href{https://stacks.math.columbia.edu/tag/02NM}{Lemma 02NM}]{stacks-project}. %
This is the reason for the additional purity condition in \cref{thm:MainThm1}.   %
\end{remark}

An important case for us are relative global complete intersections \cite[\href{https://stacks.math.columbia.edu/tag/00SP}{Definition 00SP}]{stacks-project}. We recall their definition in the generic case here. %

\begin{definition}
We say that $X=\Spec(B)$ is a \emph{generic global complete intersection} of dimension $d$ if $X_{\eta}$ is of dimension $d$ and there are $f_1,\dots,f_{n-d}\in A[x_{1}, \dots, x_{n}]$ such that $B\otimes_{A}K(A)\cong K(A)[x_{1},\dots ,x_{n}]/(f_{1},\dots ,f_{n-d})$. If $d=0$, then we say that $X$ is \emph{generically square}.

\end{definition}

\begin{lemma}\label{lem:GlobalCompleteIntersection}
Suppose that $X$ is a generic global complete intersection. Then there is a dense open subset $U\subseteq Y$ such that $p^{-1}(U)\to U$ is a relative global complete intersection.
\end{lemma}
\begin{proof}
    This follows from \cite[\href{https://stacks.math.columbia.edu/tag/00ST}{Lemma 00ST(2)}]{stacks-project}.
\end{proof}

Sparse polynomial systems with fixed monomial supports form an important class of examples.  These are generic global complete intersections, but not global complete intersections.

\begin{definition}
  \label{def:UniversalFamilies}
  Let $n,k\in\mathbb{Z}_{\geq 0}$, and $[k]\coloneqq\{1,\dots,k\}$. A \emph{fixed monomial support} is a tuple $S=(S_1,\dots,S_k)$ of finite subsets $S\subseteq \ZZ^n$.  We will generally assume that $|S_i|>1$.
  Let $A=K[c_{i,\alpha}| i\in [k], \alpha\in S_i]$, and consider the (Laurent) polynomials
  \begin{equation*}
    f_{i}\coloneqq\sum_{\alpha\in S_i} c_{i,\alpha} x^{\alpha}\in A[x_1^{\pm},\dots,x_n^{\pm}] \qquad\text{for }i\in [k].
  \end{equation*}
  We refer to $X\coloneqq\mathrm{Spec}(A[x_1^{\pm},\dots,x_n^{\pm}]/(f_{1},\dots,f_{k}))$ as the \emph{universal family} with monomial support $S$. %
\end{definition}

Suppose $k=n$. Then there are 
 simple combinatorial conditions on the monomial supports that characterize or guarantee whether the universal family is generically non-empty and thus square, see %
 \cite[Theorem 2.2]{Esterov2019} or \cite[Theorem 3]{Yu16}. 

\subsection{Root counts and specialization}
In this section, we prove a folklore result on root counts that is well known in applied algebraic geometry. It says that the root count of a generically finite $X$ can only go down under specialization, provided the morphism $p\colon X\rightarrow Y$ is sufficiently flat.  The fact that this is not generally true can be seen in the following two examples where $p$ is not flat:

\begin{example}
  \label{exa:CounterExampleRC}
  In the following two examples, the generic root count $\ell_{X,\eta}$ is lower than the specialized root count $\ell_{X,P}$ for some $P\in Y$. These examples are translated versions of well-known affine examples. The translations make sure the interesting phenomena occur in the torus, in keeping with  
  \cref{con:mainConvention}. %
  \begin{enumerate}[leftmargin=*]
  \item Take $A=\CC[a]$, $Y=\Spec(A)$, and $X=\Spec(A[x^\pm]/(a(x-1),(x-1)^d))$ for $d>1\in \mathbb{Z}_{\geq{1}}$. Its generic root count is $1$, however its root count at $a=0$ is $d$.
  \item Let $A=\CC[a^{\pm},b^{\pm}]/((a-1)^2-(b-1)^3-(b-1)^2)$, $Y=\Spec(A)$, and $X=\Spec(A[x^\pm]/(x^2-b,(a-1)x-(b-1))$. Note that $X$ is the normalization of $Y$, so that $p\colon X\to Y$ is birational. This means that the generic root count is $1$. Over the point $a=1=b$, we however have two points.
  \end{enumerate}
\end{example}

When it comes to the root count of specializations, we are primarily interested in points $P\in Y$ at which the specialization $X_P$ is finite.  These are exactly the points for which the morphism $p\colon X\rightarrow Y$ is quasi-finite at every point in the preimage.

\begin{definition}
  \label{def:quasiFiniteLocus}
  We denote the \emph{quasi-finite locus} of $X$ by
  \begin{equation*}
    \mathrm{QF}(p)\coloneqq\{P\in{Y}\mid X_P \text{ finite}\}.
  \end{equation*}
\end{definition}

We now show that root counts decrease under specialization, assuming the morphism $X\rightarrow Y$ is sufficiently flat.

\begin{lemma}\label{lem:GRCDecreases2}
  Let $X$ be generically finite, and suppose that $p^{-1}(\mathrm{QF}(p))$ is contained in the flat locus of $p$.  Then $\ell_{X,\eta}\geq{\ell_{X,y}}$ for all $y\in{\mathrm{QF}(p)}$.
\end{lemma}
\begin{proof}
 Let $\mathcal{O}_{Y,y}$ be the local ring of $Y$ at $y$, and let $R$ be a valuation ring dominating $\mathcal{O}_{Y,y}$ \cite[\href{https://stacks.math.columbia.edu/tag/00IA}{Lemma 00IA}]{stacks-project}. In particular, the field of fractions of $R$ is $K(Y)$. Note that we can also assume that $R$ is a discrete valuation ring by \cite[\href{https://stacks.math.columbia.edu/tag/00PH}{Lemma 00PH}]{stacks-project}. We write $k$ for the residue field of $R$. The inclusion $\mathcal{O}_{Y,y}\subseteq R$ gives a morphism $P\colon\Spec(R)\to Y$ such that the image of the closed point is $y$, and the image of the generic point is the generic point of $Y$. We consider the Henselization $R^{h}$ of $R$, which is again a local normal domain by \cite[\href{https://stacks.math.columbia.edu/tag/06DI}{Lemma 06DI}]{stacks-project}. By composing $P\colon\Spec(R)\to Y$ with the faithfully flat map $\Spec(R^{h})\to \Spec(R)$, we may assume that $R$ is Henselian.     %

 Set $M\coloneqq B\otimes_A R$. %
 This is a quasi-finite and flat $R$-algebra by assumption. %
 Since $R$ is Henselian, we can write
 \begin{equation*}
     M=M_{\mathrm{fin}}\times{M_{\mathrm{nfin}}},
 \end{equation*}
 where $M_{\mathrm{fin}}$ is finite and $M_{\mathrm{nfin}}\otimes_{R}{k}=(0)$, see \cite[\href{https://stacks.math.columbia.edu/tag/04GG}{Lemma 04GG, part 13}]{stacks-project}.  Moreover, $M_{\mathrm{fin}}$ is flat over $R$, so it is free.  We have $\ell_{X,\eta}=\mathrm{rank}(M_{\eta})=\mathrm{rank}(M_{\mathrm{fin},\eta})+\mathrm{rank}(M_{\mathrm{nfin},\eta})$ and $\mathrm{rank}(M_{s})=\mathrm{rank}(M_{\mathrm{fin},s})$. But $\ell_{X,y}=\mathrm{rank}(M_{\mathrm{fin},s})=\mathrm{rank}(M_{\mathrm{fin},\eta})$, so we obtain the statement of the lemma.
\end{proof}

As an immediate corollary, root counts decrease for square systems:

\begin{corollary}\label{cor:GRCDecreases2}
  Suppose that $X$ is generically square. Then $p\colon X\to Y$ is flat at the quasi-finite points of $p$.
  In particular, root counts go down under specialization.
\end{corollary}
\begin{proof}
This follows from \cite[\href{https://stacks.math.columbia.edu/tag/00ST}{Lemma 00ST}]{stacks-project}, \cite[\href{https://stacks.math.columbia.edu/tag/00SW}{Lemma 00SW}]{stacks-project} and \cref{lem:GRCDecreases2}.
\end{proof}

We close this subsection with a few remarks:

\begin{remark}\label{rem:rootCounts}\
  \begin{enumerate}[leftmargin=*]
  \item First, note that points $y\in Y$ in the scheme-theoretic language are prime ideals and need not be points on the variety.  Hence \cref{lem:GRCDecreases2} also shows that for sufficiently flat families generic root counts decrease under specialization.  The root count $\ell_{X,y}$ equals the generic root count of
    \begin{math}
      X\cap p^{-1}(V(y)),
    \end{math}
    where $V(y)=\overline{\{y\}}$ denotes the corresponding closed subscheme of $Y$.
  \item Second, the proof of \cref{lem:GRCDecreases2} in fact shows that the non-flatness is the only thing that can cause errant behavior. Namely, if we assume that $R$ is a discrete valuation ring, then $M_{\mathrm{fin}}\cong R^{m}\times M_{\mathrm{fin},\mathrm{tors}}$ by the structure theorem of finitely generated modules over principal ideal domains. There is only a local increase in the finite part of the root count if $M_{\mathrm{fin},\mathrm{tors}}$ is non-trivial, which is equivalent to $M_{\mathrm{fin}}$ being non-flat. Indeed, in this case $M_{\mathrm{nfin}}$ is a finite number of copies of the fraction field of $R$ by \cite[\href{https://stacks.math.columbia.edu/tag/02ML}{Lemma 02ML}]{stacks-project}, so it is already flat over $R$.
  \item Lastly, note that for other notions of ``root count'', we can weaken the conditions under which root counts decrease under specialization:

    For example, if one ignores the ramification multiplicities of the points in the fibers $X\to Y$ (the resulting number is sometimes called the \emph{separable degree}), then, by \cite[Th\'{e}or\`{e}me 18.10.16]{EGA44}, the root counts go down under specialization for any generically finite $X$ as long as $Y$ is geometrically unibranch \cite[\href{https://stacks.math.columbia.edu/tag/0BQ2}{Definition 0BQ2}]{stacks-project} (e.g., $Y$ normal).  In \cref{exa:CounterExampleRC} (1), the separable degree is $1$ everywhere. However, in \cref{exa:CounterExampleRC} (2), the separable degree goes up because $Y$ is not geometrically unibranch.
  \end{enumerate}
\end{remark}

\subsection{A classification of the possible root counts}\label{sec:degeneracyGraph}
We now show how \cref{lem:GRCDecreases2} leads to a natural finite set of points $y_{i,j}$ with subvarieties $Z_{i,j}$ that %
quantifies the different root counts of a parametrized system if $X$ is square.  Finding more specific results on this set of $y_{i,j}$ %
seems pertinent to the problem of determining generic root counts.

\begin{lemma}\label{lem:QuasiFiniteLocusOpen}
  Suppose that $X$ is generically square. Then $\mathrm{QF}(p)$ is open and $p^{-1}(\mathrm{QF}(p))\to \mathrm{QF}(p)$ is flat.
\end{lemma}
\begin{proof}
  By \cref{cor:GRCDecreases2}, $p$ is %
  flat at every point $x$ lying over $y\in{\mathrm{QF}(p)}$. %
  The inverse image of $\mathrm{QF}(p)$ in $X$ is thus in the open flat locus $U$. Consider the restriction $p_{U}$ of $p$ to $U$, which is a flat morphism. By \cite[\href{https://stacks.math.columbia.edu/tag/02NM}{Lemma 02NM}]{stacks-project}, the locus of relative dimension zero of $p_{U}$ is open in $U$.
  The image of this locus under $p_{U}$ is exactly $\mathrm{QF}(p)$. Since flat morphisms are open, we conclude that $\mathrm{QF}(p)$ is open.
\end{proof}

Consider the morphism of schemes $p^{-1}(\mathrm{QF}(p))\to \mathrm{QF}(p)$ induced from \cref{lem:QuasiFiniteLocusOpen}.
By \cref{cor:GRCDecreases2}, root counts decrease under specialization, so that $\ell_{X,y}\leq\ell_{X,\eta}$ for every $y\in{\mathrm{QF}(p)}$. The morphism $p^{-1}(\mathrm{QF}(p))\to \mathrm{QF}(p)$ thus has universally bounded fibers \cite[\href{https://stacks.math.columbia.edu/tag/03J4}{Definition 03J4}]{stacks-project}. %
We now apply %
\cite[\href{https://stacks.math.columbia.edu/tag/07RY}{Lemma 07RY}]{stacks-project} %
and find that there are reduced closed subschemes
\begin{equation*}
    \emptyset=Z_{-1}\subseteq{}Z_{0}\subseteq{Z_{1}}\subseteq\dots\subseteq{Z_{\ell_{X,\eta}}}=\mathrm{QF}(p)
\end{equation*}
such that %
$$Z_{i}\backslash{Z_{i-1}}=\{P\in \mathrm{QF}(p):\ell_{X,P}=i\}.$$ %
Every $Z_{i}\backslash Z_{i-1}$ has finitely many generic points %
which we denote by $y_{i,j}$. Note that if $Z_{i}=Z_{i-1}$, then there are no such generic points.  %

We give an informal interpretation of these points $y_{i,j}$. Each $y_{i,j}$ gives a new parametrized system over which the generic root count is $i$. Indeed, we can consider the closure $Z_{i,j}=\overline{\{y_{i,j}\}}$ with its reduced induced subscheme structure, and then take the base change $X\times_{Y}Z_{i,j}\to Z_{i,j}$, which has generic root count $i$. If we view points as prime ideals and thus as collections of relations, then these points can be seen as containing a \emph{minimal} number of relations such that the root count becomes  exactly $i$. That is, any other prime ideals in $Z_{i}\backslash Z_{i-1}$ will also give rise to the same root count, and they contain the prime ideals corresponding to the $y_{i,j}$'s. Suppose now that $y_{i,j}$ admits a specialization %
$y_{i-1,k}$, %
so that $y_{i-1,k}\in Z_{i,j}$. Then this gives a minimal way to change the root count from $i$ to $i-1$. Suppose for instance that $X$ and $Y$ are defined over $\mathbb{C}$. We can then consider the $Z_{i,j}$'s as subvarieties %
of the parameter space, and having a specialization of the form above means that we can find a path $\gamma\colon [0,1]\to Z_{i,j}(\mathbb{C})$ such that $\gamma(t)$ is not in $Z_{i-1,j}(\mathbb{C})$ for all $t\neq{1}$ and $j$, but $\gamma(1)\in Z_{i-1,k}$. Here we used the fact that the complex points of an irreducible (and thus connected) variety form a connected space with respect to the usual complex topology. The $y_{i,j}$'s and corresponding $Z_{i,j}$'s can thus be seen as giving a road map for the different generic root counts that occur for $p:X\to Y$ when moving around the parameter space.

Combinatorially speaking, we can define this road map as follows. Viewing a scheme as a poset via specialization, we endow the set of $y_{i,j}$
  with the induced poset structure. The Hasse diagram of this poset then defines a finite graph that provides a pictorial representation of the various possible root counts and the ways one can navigate between them. Determining this graph explicitly seems challenging, even in small examples.

\begin{remark}
  Consider the square universal family $X$ with fixed monomial supports $M_{i}$ from \cref{def:UniversalFamilies}. By the BKK-Theorem (see \cref{cor:BKK}), the generic root count is the corresponding mixed volume. In Sections \ref{sec:modifications} and \ref{sec:linearDependencies} we will study the root counts of $X$ over certain non-generic points $P\neq\eta$ of $Y$. In terms of the language introduced above, if we know that the introduced tropical intersection number is lower than the mixed volume, then this shows the existence of a set of non-trivial $y_{i,j}$. Note however that we do not show that our prime ideals are minimal in the sense discussed above.   %
\end{remark}

\subsection{Berkovich spaces}\label{sec:BerkovichSpaces}
Next we go over the basics of Berkovich analytifications of schemes.  We will use $Y$ to denote the affine scheme, though our definition will of course apply to both $X$ and $Y$ from \cref{con:mainConvention}.  For more details on Berkovich spaces, we refer the reader to \cite{Gubler2013}, \cite{Temkin15} or \cite{BPR2016}.

\begin{definition}
  \label{def:analytification}
  The \emph{Berkovich analytification} of $Y$ is the set
  \begin{equation*}
    Y^{\mathrm{an}}\coloneqq \left\{ P = (P',|\cdot |_P) \bigmid
      \begin{array}{c}
        P'\in Y \text{ and } |\cdot|_P\colon k(P')\rightarrow\mathbb{R}_{\geq 0} \text{ an absolute value}\\[0.5mm]
        \text{ on the residue field } k(P') \text{ extending } |\cdot|_K
      \end{array}
    \right\}.
  \end{equation*}
  There is a natural forgetful map $\pi\colon Y^{\mathrm{an}}\to{Y}$ mapping $P=(P',|\cdot|_P)$ to $P'$, and given $P=(\pi(P),|\cdot|_P)\in Y^\an$ and $f\in A$ we will generally write %
  \begin{enumerate}
  \item $k(P)$ for $k(\pi(P))$,
  \item $f(P)$ for $f(\pi(P))\in k(\pi(P))$,
  \item $|f(P)|$ for $|f(\pi(P))|_P\in \RR_{\geq 0}$,
  \item $\mathrm{val}(f(P))$ for $-\mathrm{log}|f(P)|$, where $\log(\cdot)$ is the natural logarithm.  
  \end{enumerate}
  We say that $P\in Y^\an$ is \emph{rational} if the induced map $K\to{k(P)}$ is an isomorphism.\linebreak
  The valuation topology on $Y^{\mathrm{an}}$ is generated by the sets
  \begin{equation*}
    \mathbb{B}(r_{1},r_{2},f)=\{P\in Y^\an\mid r_{1}<|f(P)|_P<r_{2}\} \quad\text{for } 0\leq r_1<r_2 \text{ and } f\in A,
  \end{equation*}
  and $\pi$ is continuous with respect to the valuation topology on $Y^{\an}$ and the Zariski topology on $Y$.

  Note that any morphism of schemes $p\colon X\rightarrow Y$ induces a morphism on the analytifications $p^{\mathrm{an}}\colon X^{\mathrm{an}}\to{Y^{\mathrm{an}}}$ as follows:
  For any $P\in X^{\an}$, we have an injection of residue fields $k(p(\pi(P)))\to k(\pi(P))$ and we define $p(P)\in Y^{\an}$ to be the point $p(\pi(P))\in Y$ together with the restriction of the absolute value $|\cdot|_P$ on $k(\pi(P))$ to $k(p(\pi(P)))$. The induced map $p^{\an}\colon X^{\an}\to Y^{\an}$ is continuous with respect to the valuation topology on both spaces.
\end{definition}
\begin{remark}
    Although one usually assumes in the theory of Berkovich spaces that $K$ is complete, the above definition also works when $K$ is not complete. We will be explicit about requiring $K$ to be complete when citing results from the literature on Berkovich spaces.
\end{remark}

\cref{def:analytification} is taken from \cite{Nicaise2016}, and it is equivalent to the definition using multiplicative seminorms found in \cite{Gubler2013,Temkin15,BPR2016} by \cite[Remark~2.2]{Gubler2013}.

One can also think of points in $Y^{\an}$ as equivalence classes of $L$-valued points of~$Y$, where $L$ is a valued field extension of $K$. Given an $L$-valued point as a ring homomorphism $\psi\colon A\to L$, which in turn induces an injection $k(P')\to L$, the point in $Y^{\an}$ is $(\mathrm{ker}(\psi), |\cdot|_\psi)$, where $|\cdot|_\psi$ is the restriction of the absolute value on $L$ to $k(P')$.  We will regularly use this description of points in $Y^\an$ throughout the paper.

\smallskip

In fact, while we fixed $Y$ in \cref{con:mainConvention}, we will at times have to perform a base change $Y_L\rightarrow Y$ in some of the proofs, usually to make some point $P\in Y$ rational in the following sense:

\begin{remark}
  \label{rem:RationalPoints}
  Let $L$ be a valued field extension of $K$, and let $Y_L\coloneqq\Spec(A\otimes_K L)$.

  If $L=k(P)$ for some $P=(P',|\cdot|_P)\in Y^\an$, then there is a canonical point $P_{L}=(P_L',|\cdot|_{P_L})\in{Y^{\mathrm{an}}_{L}}$, where $P_L'\in Y_L$ is the ideal generated by $P'\in Y$ and $|\cdot|_{P_L}\colon k(P_L)=k(P)\rightarrow \RR_{\geq 0}$ is the same absolute value as $|\cdot|_{P}$.  Most importantly, $P_L\in Y^\an$ is rational.

  For general $L$, we use $P_L\in Y_L^\an$ to denote any point lying over $P$. This exists by applying \cite[Lemma 2.3]{Gubler2013} to $Y^{\mathrm{an}}_{\hat{L}}\to{Y^{\mathrm{an}}_{L}}\to{Y^{\mathrm{an}}}$, where $\hat{L}$ is the completion of~$L$.  If $P$ is rational, then $P_L$ is unique.

  Note that $K$ is assumed to be complete in \cite[Lemma 2.3]{Gubler2013}, but the proof works verbatim for non-complete fields as well. Namely, one needs the fact if $K\to L_{i}$ are valued field extensions for $i=1,2$, then there is a valued field $L$ that fits into a commutative diagram of valued field extensions
\begin{equation*}
    \begin{tikzcd}
        K \arrow[r] \arrow[d] & L_{1} \arrow[d] \\
        L_{2} \arrow[r] & L
    \end{tikzcd}
\end{equation*}
But this immediately follows from the case where $K$ and the $L_{i}$'s are complete. %

\end{remark}

An important result we use in this paper is the fact that dense open subsets $U$ in $Y$ give dense open subsets $U^{\an}$ in $Y^{\an}$. We record this here for the convenience of the reader.

\begin{proposition}\label{lem:DensityZariski}
  Assume that $K$ is complete. Let $X$ be a scheme that is locally of finite type over $K$, and let $U\subseteq{X}$ be a dense open set. Then $U^{\mathrm{an}}$ is open and dense.
\end{proposition}
\begin{proof}
This follows from \cite[Proposition 2.6.4]{Berkovich1993}.
\end{proof}

\subsection{Fiberwise tropicalizations and local tropical bases}
In this section, we define the necessary tropical objects of our work.  Recall that $Y=\Spec(A)$ is an integral scheme and that $X=\Spec(A[x^\pm]/I)$ is a closed subscheme of a relative torus $T=\Spec(A[x^\pm])$ over $Y$ by \cref{con:mainConvention}. %

\begin{definition}
  \label{def:fiberwiseTropicalization}
  Let $P=(P',|\cdot|_{P})\in Y^\an$. Let $X_{P'}\to T_{P'}$ be the fiber of $X\to T$ over $P'$. We consider $X_{P'}$ and $T_{P'}$ as schemes over the valued field $k(P)$. %
  We define the 
  \emph{fiberwise tropicalization} of $X$ at $P$ to be the tropicalization of $X_{P'}$ inside $T_{P'}$, %
  as in \cite[Section 3]{Gubler2013} or \cite[Definition 3.2.1]{MS15}. %
  We endow this set with the structure of a weighted polyhedral complex in 
  $\RR^n$ as in \cite[Definition 13.4]{Gubler2013} or \cite[Definition 3.4.3]{MS15}.  %

  Given a decomposition $X=X_1\cap\dots\cap X_k$, we refer to the intersection of their fiberwise tropicalizations $\bigcap_{i=1}^k \Trop(X_{i,P})$ as a \emph{fiberwise tropical prevariety} at $P$.
\end{definition}

\begin{example}
  \label{ex:runningExampleFiberwiseTropicalization}
  Let $X$ and $Y$ be as in \cref{ex:runningExampleGenericRootCount}.  Consider the decomposition $X = X_1\cap X_2$ given by the two ideals
  \begin{equation*}
    I_1\coloneqq \big(a_1x^2+a_2y^2+a_3y,\; b_1x^2+b_2y^2+b_3z\big) \quad\text{and}\quad I_2\coloneqq \big( c_1z+c_2 \big),
  \end{equation*}
  and, for $\lambda>0$, the point
  \begin{equation*}
    P_\lambda\coloneqq \Big(a_1-(1+t^\lambda), a_2-1, a_3-1, b_1-1, b_2-1, b_3-1, c_1-t^2, c_2-1\Big) \in Y.
  \end{equation*}

  The fiberwise tropicalization $\Trop(X_{2,P_\lambda})$ is independent of $\lambda$.  It is $\Trop(X_{2,P_\lambda})=(0,0,-2)+\Span(e_1,e_2)$, where $e_1,e_2,e_3$ are the unit weights in the variables $x,y,z$, respectively.

  In contrast, the fiberwise tropicalization $\Trop(X_{1,P_\lambda})$ depends on $\lambda$.  It consists of two vertices $v_1=(0,0,0)$ and $v_2=(\lambda,\lambda,\lambda)$, both connected by an edge, and each vertex connected to two rays in four distinct directions $u_1,\dots,u_4$, see \cref{fig:OSCARcomputation} for an \textsc{OSCAR} computation in the case $\lambda=4$.

  In particular, $\Trop(X_{1,P_\lambda})$ and $\Trop(X_{2,P_\lambda})$ intersect in two points (each of multiplicity $2$), of which one diverges as $\lambda$ increases, see \cref{fig:tropicalIntersection}.
  This is consistent with the observation in \cref{ex:runningExampleGenericRootCount} that the generic root count equals $\ell_{X,\eta}=4$ yet at $P=(a_1b_2-a_2b_1)$ the root count equals $2$.
\end{example}

\begin{figure}[t]
  \centering
  \begin{minipage}[t]{0.475\linewidth}
\begin{lstlisting}[basicstyle=\ttfamily\scriptsize, escapeinside={(*@}{@*)}]
julia> vertices_and_rays(TropI1)
6-element SubObjectIterator{...}:
 [0, 0, 0]    (*@ \textcolor{blue!50!black}{\# $\eqqcolon v_1$} @*)
 [4, 4, 4]    (*@ \textcolor{blue!50!black}{\# $\eqqcolon v_2$} @*)
 [1, 1, 2]    (*@ \textcolor{blue!50!black}{\# $\eqqcolon u_3$} @*)
 [0, 0, -1]   (*@ \textcolor{blue!50!black}{\# $\eqqcolon u_4$} @*)
 [-1, 0, 0]   (*@ \textcolor{blue!50!black}{\# $\eqqcolon u_5$} @*)
 [-1, -2, -2] (*@ \textcolor{blue!50!black}{\# $\eqqcolon u_6$} @*)
\end{lstlisting}
  \end{minipage}
  \begin{minipage}[t]{0.475\linewidth}
\begin{lstlisting}[basicstyle=\ttfamily\scriptsize, escapeinside={(*@}{@*)}]
julia> IncidenceMatrix(
         maximal_polyhedra(TropI1))
5x6 IncidenceMatrix
[1, 2] (*@ \textcolor{blue!50!black}{$=\mathrm{conv}(v_1,v_2)$} @*)
[2, 3] (*@ \textcolor{blue!50!black}{$=v_2+\RR_{\geq0}\cdot u_3$} @*)
[2, 4] (*@ \textcolor{blue!50!black}{$=v_2+\RR_{\geq0}\cdot u_4$} @*)
[1, 5] (*@ \textcolor{blue!50!black}{$=v_1+\RR_{\geq0}\cdot u_5$} @*)
[1, 6] (*@ \textcolor{blue!50!black}{$=v_1+\RR_{\geq0}\cdot u_6$} @*)
\end{lstlisting}
  \end{minipage}
  \caption{OSCAR output for $\Trop(X_{1,P_4})$ from \cref{ex:runningExampleFiberwiseTropicalization} (black) and their interpretation (blue).}
  \label{fig:OSCARcomputation}
\end{figure}

\begin{figure}[t]
  \centering
  \begin{tikzpicture}[x={(240:4mm)},y={(0:8mm)},z={(90:4mm)},every node/.style={font=\small}]
    \coordinate (v1) at (0,0,0);    %
    \coordinate (v2) at (4,4,4);    %
    \coordinate (p1) at (-1,-2,-2); %
    \coordinate (p2) at (4,4,-2);   %

    \coordinate (foo1) at (-2,-4,-4); %
    \coordinate (foo2) at (4,4,-3.75); %
    \draw[densely dashed,thin] (p1) -- (foo1);
    \draw[densely dashed,thin] (p2) -- (foo2);

    \fill[blue,opacity=0.2] (-1.75,-3.5,-2) -- (-1.75,6.5,-2) -- (6,6.5,-2) -- (6,-3.5,-2) node[anchor=north west,opacity=1] {$\Trop(X_{2,P_\lambda})$} -- cycle;

    \draw[thick] (v1) -- (v2);

    \draw[->,thick] (v2) -- ++(1.5,1.5,3) node[anchor=south west] {$u_3$};
    \draw[thick] (v2) -- node[right] {$\Trop(X_{1,P_\lambda})$} (p2);
    \draw[->,thick] (foo2) -- ++(0,0,-1) node[anchor=north] {$u_4$};

    \draw[->,thick] (v1) -- ++(-2.5,0,0) node[anchor=south] {$u_5$};
    \draw[thick] (v1) -- (p1);
    \draw[->,thick] (foo1) -- ++(-1,-2,-2) node[anchor=north east] {$u_6$};

    \draw[->,dashed,red!50!black] (p2) -- ++(1,1,0);
    \draw[->,thick,red!50!black] (v2) -- ++(1,1,1) node[right] {$(1,1,1)$};

    \fill (v1) circle (2pt);
    \fill (v2) circle (2pt);
    \node[anchor=south east] at (v1) {$v_1$};
    \node[anchor=south east] at (v2) {$v_2$};
    \draw[fill=white] (p1) circle (2pt);
    \draw[fill=white] (p2) circle (2pt);

  \end{tikzpicture}
  \caption{The intersection of $\Trop(X_{1,P_\lambda})$ and $\Trop(X_{2,P_\lambda})$ from \cref{ex:runningExampleFiberwiseTropicalization} for $\lambda=4$. The red arrows show how $\Trop(X_{1,P_\lambda})$ and consequently the intersection changes as $\lambda\rightarrow\infty$.}
  \label{fig:tropicalIntersection}
\end{figure}

Next we require the notion of tropical bases in our setting.

\begin{definition}\label{def:TropicalBases}
  Let $P\in Y^\an$, and let $V\subseteq Y$ be a Zariski-open set with $\pi(P)\in V$. Let $A_V$ be the induced coordinate ring, and $I_V$ be the induced ideal.  Let $f_1,\dots,f_k\in I_V\subseteq A_V[x^\pm]$ be a set of generators, say $f_i=\sum c_{i,\alpha}x^{\alpha}$ for some $c_{i,\alpha}\in A_V$.

  We say $f_1,\dots,f_k$ are a \emph{$K$-rational local tropical basis} of $X$ around $P$ if there is an open neighbourhood $U\subseteq Y^\an$ with $P\in U$ and $\pi(U)\subseteq V$ such that $f_{1,Q},\dots,f_{k,Q}\in I_Q$ is a tropical basis for all $Q\in U$, i.e., $\Trop(X_{Q})=\bigcap_{i=1}^{k} \Trop(V(f_{i})_{Q})$. Moreover, we say $f_1,\dots,f_k$ are \emph{non-degenerate} around $P$, if $c_{i,\alpha}(Q)\neq 0$ for all $Q\in U$ unless $c_{i,\alpha}(P)=0$.

  A \emph{(non-degenerate) local tropical basis} around $P$ is a (non-degenerate) $L$-rational local tropical basis of $X_L$ at $P_L$ for some valued field extension $K\to{L}$ and a point $P_{L}$ lying over $P$.
\end{definition}

\begin{example}
  \label{ex:runningExampleTropicalBasis}
  Let $X_1$ and $X_2$ be as in \cref{ex:runningExampleFiberwiseTropicalization}, i.e., given by the ideals
  \begin{equation*}
    I_1\coloneqq \big(\underbrace{a_1x^2+a_2y^2+a_3y}_{\eqqcolon f_1},\; \underbrace{b_1x^2+b_2y^2+b_3z}_{\eqqcolon f_2}\big) \quad\text{and}\quad I_2\coloneqq \big( \underbrace{c_1z+c_2}_{\eqqcolon g} \big).
  \end{equation*}
  We then have
  \begin{itemize}[leftmargin=7mm]
  \item $g$ is a local tropical basis of $X_2$ for all $P\in Y^{\an}$, and it is non-degenerate as long as $c_{1}(P)\neq 0\neq c_{2}(P)$.
  \item $f_1,f_2$ form a local tropical basis of $X_1$ for those $P\in Y^{\an}$ for which $\Trop(V(f_1)_P)$ and $\Trop(V(f_2)_P)$ intersect transversally.
  \item $f_1,f_2$ and $f_0\coloneqq b_1f_1-a_1f_2 = (a_1b_2 - a_2b_1)y^2 - a_3b_1y + a_1b_3z$ form a local tropical basis of $X_1$ for all $P\in Y^{\an}$. It is non-degenerate for all $P$ for which $a_{i}(P)\neq 0\neq b_{i}(P)$ and $(a_{1}b_{2} - a_{2}b_{1})(P)\neq 0$.
  \end{itemize}
\end{example}

Note that local tropical basis are preserved under valued field extensions:

\begin{lemma}
  \label{lem:tropicalBasisFieldExtension}
  Let $f_{1},\dots,f_k$ be a $K$-rational local tropical basis at $P$ and let $K\to{L}$ be a valued field extension. Then $f_{1},\dots,f_k$ are an $L$-rational local tropical basis at any $P_{L}$ mapping to $P$ under the morphism $Y_{L}^{\mathrm{an}}\to{Y^{\mathrm{an}}}$.
\end{lemma}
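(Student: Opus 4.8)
The statement to prove is essentially a base-change stability claim: if $f_1,\dots,f_k$ form a $K$-rational local tropical basis at $P$ (so there are opens $U\ni P$ in $Y^{\mathrm{an}}$ and $V$ in $Y$ with $\pi(U)\subset V$, satisfying non-degeneracy and the tropical basis condition), then after a valued field extension $K\to L$ and a choice of $P_L$ over $P$, the same polynomials still work. The plan is to transport the data along the base change: set $V_L=V\times_K L$, take $U_L=(\pi_L^{\mathrm{an}})^{-1}(U)$ — or more precisely an open neighborhood of $P_L$ contained in the preimage of $U$ under the map $Y_L^{\mathrm{an}}\to Y^{\mathrm{an}}$ — and check the two conditions pointwise.

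First I would handle the non-degeneracy condition. A point $Q_L\in U_L$ maps to a point $Q\in U$ under $Y_L^{\mathrm{an}}\to Y^{\mathrm{an}}$. The coefficients $c_{i,\alpha}\in A_V$ have images $c_{i,\alpha}^{(L)}\in A_{V_L}=A_V\otimes_K L$, and by compatibility of the forgetful maps there is a commutative square relating $A_V\to k(\pi(Q))$ and $A_{V_L}\to k(\pi_L(Q_L))$; in particular $c_{i,\alpha}(Q)=0$ iff $c_{i,\alpha}^{(L)}(Q_L)=0$, since the second ring map factors through the first up to the field extension $k(\pi(Q))\hookrightarrow k(\pi_L(Q_L))$, which is injective. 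So non-degeneracy at $Q$ gives non-degeneracy at $Q_L$. Second, for the tropical basis condition, I would use that tropicalization commutes with valued field extension: for a fixed point $Q\in U$ with field of definition and valuation, the fiber $I_{Q_L}$ in $k(\pi_L(Q_L))[x^\pm]$ is obtained from $I_Q$ by extension of scalars along $k(\pi(Q))\to k(\pi_L(Q_L))$, and $\trop(X_{Q_L})=\trop(X_Q)$ because tropicalization of a subscheme of a torus is insensitive to base field extension (the Bieri–Groves set only depends on the valuations appearing, which are preserved). Likewise $\trop(V(f_i)_{Q_L})=\trop(V(f_{i})_Q)$. Hence the equality $\trop(X_{Q_L})=\bigcap_i \trop(V(f_i)_{Q_L})$ follows from the corresponding equality downstairs; so $f_{1,Q_L},\dots,f_{k,Q_L}$ form a tropical basis of $I_{Q_L}$. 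Finally I must check that $f_1,\dots,f_k$ still generate $I_{V_L}$, which is immediate since generation is preserved by flat base change and $I_{V_L}=I_V\otimes_{A_V}A_{V_L}$.

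The main obstacle I anticipate is bookkeeping around the map $Y_L^{\mathrm{an}}\to Y^{\mathrm{an}}$ and the compatibility of residue fields and valuations: one needs that a point $Q_L$ over $Q$ really does induce a valued extension of $k(\pi(Q))$ compatibly with everything in \cref{con:mainConvention}, and that $U_L$ can be chosen open (it is, being the preimage of an open under a continuous map, intersected with a neighborhood of $P_L$). None of this is deep, but it requires care to state precisely. The only genuinely substantive input is the invariance of tropicalizations of torus subschemes under valued base field extension, which is standard (see \cite{BPR2016,Gubler2013}) and I would cite rather than reprove. Everything else is a routine diagram chase, so I would keep the written proof short, emphasizing the two invariance facts and leaving the compatibility verifications to the reader.
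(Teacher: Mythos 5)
Your argument is correct and follows essentially the same route as the paper's proof: you pull back the open neighborhood along the continuous map $Y_L^{\mathrm{an}}\to Y^{\mathrm{an}}$ and then invoke invariance of tropicalizations under valued field extension (the paper cites \cite[Theorem 3.2.4]{MS15} for the latter). You additionally spell out the non-degeneracy and generation checks, which the paper leaves implicit; these are routine and correctly handled.
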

\begin{proof}
  The morphism $X_{L}^{\mathrm{an}}\to{X^{\mathrm{an}}}$ is continuous, so the neighborhood $U$ of $P$ gives an open neighborhood $U_{L}$ of $P_{L}$. The $f_{i}$'s then still form a tropical basis on $U_{L}$ since tropicalizations are invariant under field extensions by \cite[Theorem 3.2.4]{MS15}.
\end{proof}

Finally, we introduce an abbreviation for the cardinality of a finite stable intersection, and show that it is invariant under translation and taking recession fans.   The latter is a special case of \cite[Theorem 5.7]{AHR16}. For the definition and properties of stable intersections, see \cite[Section 3.6]{MS15}.

\begin{definition}
  \label{def:tropicalIntersectionNumber}
  Let $\Sigma_1,\dots,\Sigma_k$ be balanced polyhedral complexes of complementary dimension in $\RR^n$, i.e., $\codim(\Sigma_1)+\dots+\codim(\Sigma_k)=n$.  Their \emph{tropical intersection number} is the cardinality of their stable intersection, counted with multiplicity:
  \begin{equation*}
    \Sigma_1\cdot\ldots\cdot\Sigma_k\coloneqq \#\Big(\Sigma_1\cap_{\st}\dots\cap_\st\Sigma_k\Big).
  \end{equation*}
\end{definition}

\begin{lemma}
  \label{lem:tropicalIntersectionProductTranslationInvariant}
  Let $\Sigma_1,\dots,\Sigma_k$ be balanced polyhedral complexes in $\RR^n$ of complementary dimension and let $v_1,\dots,v_k\in\RR^n$.  Then
  \[ \Sigma_1\cdot\ldots\cdot \Sigma_k = (\Sigma_1+v_1)\cdot \ldots\cdot(\Sigma_k+v_k) \, . \]
\end{lemma}
\begin{proof}
  Without loss of generality, we may assume that $k=2$ and that $v_2=0$. Consider the function $m\colon [0,1] \rightarrow \ZZ$ given by $t \mapsto (\Sigma_1+t\cdot v_1)\cdot \Sigma_2$.  %
  By the alternative definition of stable intersection via perturbations in \cite[Proposition~3.6.12]{MS15}, the tropical intersection product is invariant under perturbation, hence $m$ is locally constant on $[0,1]$.  Since $[0,1]$ is connected, it follows that $m$ is constant.
\end{proof}

\begin{lemma}
  \label{lem:tropicalIntersectionNumberRecessionFan}
  Let $\Sigma_1,\dots,\Sigma_k$ be balanced polyhedral complexes of complementary dimension in $\RR^n$, and let $\rec(\Sigma_1)$ denote the recession fan of $\Sigma_1$.  Then
  \begin{equation*}
    \Sigma_1\cdot\ldots\cdot\Sigma_k = \rec(\Sigma_1)\cdot\Sigma_2\cdot\ldots\cdot\Sigma_k.
  \end{equation*}
\end{lemma}
\begin{proof}
  Without loss of generality, we may assume that $k=2$.  For $\lambda>0$ consider $\lambda\cdot\Sigma_1\coloneqq\{\lambda\cdot\sigma_1\mid\sigma_1\in\Sigma_1\}$, where $\lambda\cdot\sigma_1$ denotes linear scaling $\sigma_1$ by $\lambda$, and $\mult_{\lambda\cdot\Sigma_1}(\lambda\cdot\sigma_1)\coloneqq \mult_{\Sigma_1}(\sigma_1)$.  Note that $\lambda\cdot\Sigma_1$ describes a degeneration of $\Sigma_1=1\cdot\Sigma_1$ to $\rec(\Sigma_1)=\lim_{\lambda\to 0}\lambda\cdot\Sigma_1$ where the limit is taken with respect to the Hausdorff distance.  Observe however that locally the degeneration looks like a translation, i.e., for all $w\in \lambda\cdot\Sigma_1\cap_\st\Sigma_2$ there is a $u\in\RR^n$ such that for $\varepsilon>0$ sufficiently small $(\lambda\pm\varepsilon)\cdot\Sigma_1\cap_\st\Sigma_2=(\lambda\cdot\Sigma_1\pm\varepsilon\cdot u)\cap_\st\Sigma_2$ locally around $w$. The statement now follows from the fact that tropical intersection numbers are invariant under translation by \cref{lem:tropicalIntersectionProductTranslationInvariant}.
\end{proof}

\section{Tropical flatness and the generic tropical flatness theorem}\label{sec:FiberwiseTropicalizations}
This section revolves around the notion of tropical flatness, which can be regarded as a regularity condition on the variation of the fiberwise tropicalizations. Indeed, we will see in \cref{sec:TropGeneric} that if $X$ is tropically flat around $P$, then many properties of the fiber $X_P$ that are evident in the fiberwise tropicalization $\trop (X_{P})$ become generic properties of $X$.
Moreover, we show that under our assumption in \cref{con:mainConvention} any scheme $X$ is tropically flat for generic $P\in Y^{\an}$.

\subsection{Tropical flatness}\label{sec:tropicalFlatnessBasics}
In this section, we introduce tropical flatness, and describe around which points hypersurfaces and linear spaces are tropically flat.

\begin{definition}
  \label{def:TropicallyFlat}
  We say $X$ is \emph{tropically flat} over $P\in{Y^{\mathrm{an}}}$ if it admits a non-degenerate local tropical basis at $P$ as defined in \cref{def:TropicalBases}. The \emph{tropically flat locus} is the set of all $P\in{Y^{\mathrm{an}}}$ over which $X$ is tropically flat.
\end{definition}

We will see in \cref{sec:GenericTropicalFlatness} that the tropically flat locus is dense in $Y^{\an}$. We discuss two important examples here where the tropically flat locus can be described explicitly.  %

\begin{lemma}\label{cor:TropConstant}
  Let $X=V(f)$ for some non-constant polynomial $f\in A[x_{1}^{\pm},\dots,x_{n}^{\pm}]$, say $f=\sum_\alpha c_{\alpha}x^{\alpha}$. Let $U=\bigcap_\alpha D(c_{\alpha})$.  Then the tropically flat locus of $X$ is $U^{\an}$.
\end{lemma}
\begin{proof}
  Note that $f_P$ is a tropical basis for the ideal it generates for all $P\in Y^{\an}$ with $f_P\neq 0$ \cite[Example 2.6.4]{MS15}.  Hence $f$ is a local tropical basis around all $P\in U^{\an}$. And $f$ is non-degenerate around all $P\in U^{\an}$ by construction of $U^\an$.
\end{proof}

We now describe the tropical flat locus for linear spaces, for which we need the Pl\"ucker vectors of both the linear space as well as its orthogonal complement:

\begin{definition}\label{def:PlueckerVectors}
  Let $X=V(I)$ for some linear ideal $I\subseteq A[x_{1}^{\pm},\dots,x_{n}^{\pm}]$. Suppose that $X$ is generically of codimension $k$, and fix a set of generators $I_{\eta} = \langle f_{1,\eta},\dots,f_{k,\eta}\rangle$, say $f_{i,\eta}=\sum_{j=1}^n c_{i,j}(\eta)\cdot x_j\in K(Y)[x_{1}^{\pm},\dots,x_{n}^{\pm}]$ for some $c_{i,j}\in A$.
  Consider the coefficient matrix $C\coloneqq (c_{i,j})_{i\in[k],j\in[n]}\in A^{k\times n}$ and denote $C(\eta)\coloneqq (c_{i,j}(\eta))_{i\in[k],j\in[n]}\in K(Y)^{k\times n}$.  Let $D\in A^{(n-k)\times n}$ be a matrix such that $D(\eta)\in K(Y)^{(n-k)\times n}$ is of full rank and $C(\eta)\cdot D(\eta)^t=0$.  We denote the maximal minors of $C$ and $D$ by $p_{\Lambda}$ and $q_{[n]\setminus\Lambda}$ for $\Lambda\in\binom{[n]}{n-k}$, respectively, and refer to them as \emph{Pl\"ucker vectors}.
\end{definition}

\begin{remark}
Note that the $p_{I}$'s and $q_{I}$'s are only well defined up to a $K(Y)^{*}$-multiple, as we for instance can choose a different basis for the row space of $C$ and the row space of $D$. Moreover, we can do this independently for $C$ and $D$, so the $c_{J}$ relating the two sets of Pl\"{u}cker vectors can be general elements of $K(Y)^{*}$.  We will assume a fixed basis for both and only evaluate the $p_{I}$'s and $q_{I}$'s at points of $Y^{\an}$ where they define regular functions.
\end{remark}

\begin{lemma}\label{lem:FlatnessMatroids}
  Let $X=V(I)$ for some linear ideal $I\subseteq A[x_{1}^{\pm},\dots,x_{n}^{\pm}]$.  Let $p_{\Lambda},q_{[n]\setminus\Lambda}\in A$ be defined as in \cref{def:PlueckerVectors}, and denote $U\coloneqq \bigcap_{\Lambda\in\binom{[n]}{k}}D(p_\Lambda\cdot q_{[n]\setminus\Lambda})$. Then the tropically flat locus of $X$ contains $U^{\an}$.
\end{lemma}
\begin{proof}
  Let $P\in U^{\an}$.  As $U\subseteq \bigcap_{\Lambda\in\binom{[n]}{k}}D(p_\Lambda)$, we may assume that $f_1,\dots,f_k$ are non-degenerate around $P$.
  For $M=\{i_{k-1},\dots,i_{n}\}\subseteq [n]$ let $g_{M}\coloneqq \sum_{j=k-1}^n q_{M\setminus \{i_j\}} \cdot x_{i_j}\in A[x_1,\dots,x_n]$, so that the $g_{M,\eta}$'s form a tropical basis of $I_\eta$ by \cite[Lemma 4.3.16]{MS15}.  As $U\subseteq \bigcap_{\Lambda\in\binom{[n]}{k}}D(q_{[n]\setminus\Lambda})$, we can find $c_M\in A$ such that $f_M\coloneqq c_M\cdot g_M\in I$ and the $f_{M,Q}$ remain a tropical basis of $I_Q$ by \cite[Lemma 4.3.16]{MS15}. Then the $f_i$'s and the $f_M$'s form a non-degenerate local tropical basis around $P$, showing that $X$ is tropically flat around $P$.
\end{proof}

\begin{remark}
  If $I$ is affine linear, then we can apply the result above to the homogenization to obtain a locus over which $X$ is tropically flat.
\end{remark}

\subsection{Tropically flat morphisms are locally constant}\label{sec:GenericTropicalFlatness}

In this section we show that the tropicalization of a subscheme $X\subseteq T$ that is tropically flat around a point $P$ is locally constant after a non-archimedean field extension. This is the key property that allows us to extend local results to global results in \cref{sec:TropGeneric}. It relies on the following lemma.

\begin{lemma}\label{lem:ApproximationLemma}
  Let $g\in{A}$ and $P\in{Y^{\mathrm{an}}}$ with $r\coloneqq |g(P)|\neq{0}$. Let $L\coloneqq k(P)$ be the residue field of $P$ and let $L\to  N$ be a valued field extension. Let $P_{L}\in Y^{\mathrm{an}}_{L}$ denote the canonical point and $P_{N}\in Y^{\mathrm{an}}_{N}$ any point over $P_{L}$, as in \cref{rem:RationalPoints}.
  Then there is a non-empty open neighborhood $U\subseteq Y^{\mathrm{an}}_{N}$ around $P_{N}$ and an element $c\in{L}$ (which we view as an element of $N$ through $L\to N$) with $|c|=r$ such that for all $Q\in U$
  \begin{equation}
    \label{eq:ApproximationLemma}
    |g(Q)-c(Q)|<r \quad\text{and}\quad |g(Q)|=r.
  \end{equation}
\end{lemma}
\begin{proof}
  Set $c\coloneqq g(P)$ and consider the open ball $U\coloneqq\{Q\in{Y_{N}^{\mathrm{an}}}\mid |(g-c)(Q)|<r\}$. Note that $U$ is open since $r\neq{0}$. For any $P_{N}\in{Y_{N}^{\mathrm{an}}}$ mapping to $P$, we have $P_{N}\in{U}$ since $g(P_{N})=g(P)=c$.
  The inequality in the lemma is then satisfied by definition. Note that such a point $P_{N}$ exists by \cref{rem:RationalPoints}.

  For the equality in the lemma, we use the non-archimedean triangle inequality, except the inequality is an equality since the two involved absolute values are distinct:
  \begin{equation*}
    |g(Q)|= \max\{|g(Q)-c(Q)|,|c(Q)|\}=|c(Q)|=r. \qedhere
  \end{equation*}
\end{proof}

\vspace{0.2cm}

\begin{example}\label{exa:LocalConstancy}
  Let $K=\mathbb{C}\{\!\{t\}\!\}$ be the field of complex Puiseux series, let $A=K[x]$, and let $Y=\Spec(A)$. The Berkovich analytification $Y^{\an}$ is the infinite $\RR$-tree described in \cite[Section 2.1]{BakerRumely2010}, albeit without the point at infinity.  We will focus on the case where $g\coloneqq x\in A$ in this example.

  Consider the $K$-rational point $P\coloneqq (x-1)\in Y$ for which we have $k(P)=K$, $c\coloneqq g(P)=1$, $r\coloneqq |g(P)|=1$, and $U=\{Q\mid |x(Q)-1|<1\}$.  It is straightforward to verify that the conditions in Equation~\eqref{eq:ApproximationLemma} hold.

  Now consider the \emph{Gauss point} $P\coloneqq \zeta_{G}\in Y^\an$, which is $\zeta_G=(\eta,|\cdot |_{\zeta_G})$ where $\eta\in Y$ is the generic point and $|\cdot|_{\zeta_G}\colon L\coloneqq K(x)\rightarrow \RR_{\geq0}$ is the absolute value with
  \begin{equation*}
    \Big|\sum_{i=0}^{n}c_{i}x^{i}\Big|_{\zeta_{G}}=\max\Big\{|c_{i}|\mid 0\leq{i}\leq{n}\Big\}.
  \end{equation*}
The new coordinate algebra is then $A_{L}=L\otimes_{K}K[x]$. Note that it is a bit dangerous to identify $A_L$ with $L[x]$, since we have two distinct copies of $x$: $x\otimes 1$ and $1\otimes x$. In particular,  $1\otimes x-x\otimes 1$ is not zero. We write $\zeta_{G,L}$ for the canonical point of $Y^{\an}_{L}$ lying over $\zeta_{G}$, which can be obtained from the ring homomorphism $A_{L}\to L$ sending $1\otimes x$ and $x\otimes 1$ to $x$.

  The natural map $A\to A_{L}$ maps $g=x\in A$ to $1\otimes x$. Since $g(\zeta_{G})$ maps to $x\otimes 1$ under $L\to A_{L}$, we see that the element $g-c$ used in \cref{lem:ApproximationLemma} is $1\otimes x-x\otimes 1$. Moreover, $r\coloneqq |g(\zeta_{G})|=1$, so that the open neighborhood is %
  \begin{equation*}
    U=\Big\{Q\in Y^{\an}_{L}\mid |1\otimes x-x\otimes 1|<1\Big\}.
  \end{equation*}
  Note that $(1\otimes x-x\otimes 1)(\zeta_{G,L})=0$ so that $\zeta_{G,L}\in U$. Another point of $U$ is for instance given by the ring homomorphism $\psi_{Q}:A_{L}\to L$ sending $1\otimes x$ to $x+t$. On this open neighborhood $U$, we have that $|g(Q)|=1$. %

\end{example}

As an immediate corollary, we find that fiberwise tropicalizations that are tropically flat are locally constant after a non-archimedean base change.

\begin{corollary}\label{cor:TropicallyFlatImpliesConstant}
  Suppose that $X$ is tropically flat over a point $P\in Y^{\an}$. Then there is a valued field extension $K\subseteq N$, a point $P_{N}\in Y^\an_N$ over $P$, and an open neighborhood $U_{N}\subseteq Y^{\an}_{N}$ of $P_{N}$ such that $\Trop(X_{Q})$ is equal to $\Trop(X_{P})$ %
  for all $Q\in U_{N}$. %
\end{corollary}
\begin{proof}
  Let $f_{1},\dots,f_{k}$ be a non-degenerate local tropical basis around $P$ with field extension $K\to  L$, open neighborhood $U_{L}$ and point $P_{L}$. By \cref{lem:ApproximationLemma}, there is a valued field extension $L\to N$, a point $P_{N}$ lying over $P_{L}$, and open neighborhoods $V_{i,j,N}$ of $P_{N}$ over which the absolute values of the nonzero coefficients $c_{i,j}$'s of the $f_{i}$'s are constant. %
  We then take $W\coloneqq(\bigcap_{i,j} V_{i,j,N})\cap U_{N}$  to conclude that the tropicalizations are constant over $W$. Moreover, we have %
  $\Trop(X_{N,P_{N}})=\Trop(X_{P})$ (see \cref{lem:tropicalBasisFieldExtension}), so that all these tropicalizations are equal to  %
  $\mathrm{trop}(X_{P})$.
\end{proof}
\begin{remark}
   If $X$ is not tropically flat around $P$, then the statement of \cref{cor:TropicallyFlatImpliesConstant} is generally not true, even after a non-archimedean field extension. For instance, consider $X=V(f)$ for $f=y_{1}-y_{2}-(x-1)\in A[y_{1}^{\pm},y_{2}^{\pm}],$ where $A=K[x]$ as in \cref{exa:LocalConstancy}. Then $X$ is not tropically flat over $P=(x-1)$, and it is not constant near $P$ over any valued field extension. %
\end{remark}

\subsection{Generic tropical flatness theorem}

In this section, we prove that under the conditions in \cref{con:mainConvention} the tropically flat locus of $X=V(I)$ contains an open and dense set of $Y^\an$.  Central to our arguments are generic valuations and the following closed neighborhoods that contain them:

\begin{definition}\label{def:GenericValuation}
  Recall that $\pi\colon Y^\an\rightarrow Y, (P,|\cdot|_P)\mapsto P$ in \cref{def:analytification} denotes the natural forgetful map. A \emph{generic valuation} for $Y$ is a point $P\in Y^{\mathrm{an}}$ such that $\pi(P)$ is the generic point of $Y$.  Moreover, for a finite set $\mathcal C\in A$ we set
  \begin{equation*}
    \mathbb{B}_{P,\mathcal{C}}\coloneqq \{Q\in Y^{\an}\mid |c(Q)|=|c(P)| \text{ for all } c\in\mathcal C\}.
  \end{equation*}
\end{definition}

Before we come to the proof, note by the following two lemma that generic valuations are dense in $Y^\an$, and, if $K$ is non-trivially valued, then the closed subset $\mathbb{B}_{P,\mathcal{C}}$ contains rational points and open set neighborhoods around them:

\begin{lemma}\label{lem:GenericValuations3}
  Any basic open set $\mathbb{B}(r_{1},r_{2},h)\coloneqq \{Q\in Y^\an\mid r_1<|h(P)|<r_2\}$, where $h\in {A\backslash K}$ and $0\leq r_{1}<r_{2}$, contains a generic valuation $P\in Y^{\an}$.
\end{lemma}
    \begin{proof}
  Consider the subalgebra $K[h]\subseteq{A}$, giving a non-constant morphism $\mathrm{Spec}(A)\to\mathbb{A}^{1}_{K}$. We extend this to a transcendence basis for $A$, so that $K(A)$ is finite over $K(h_{1},\dots,h_{n})$, where $h_{1}=h$. This induces a rational map $\mathrm{Spec}(A)\to\mathbb{A}^{n}$ that is finite over an open subset $U$ of $\mathbb{A}^{n}$.

  Write $\Gamma$ for the value group of $K$. For any $v=(v_1,\dots,v_n)\in\Gamma^{n}$, we now have a natural generic valuation $P_{0}\in (\mathbb{A}^{n})^{\an}$ such that $|h_{i}(P_{0})|=v_{i}$. Explicitly, we can construct the algebra $\mathcal{A}=R[h_{1}/\varpi^{v_{1}},\dots,h_{n}/\varpi^{v_{n}}]$ over the valuation ring $R$ of $K$. Here $\varpi^{v_{i}}$ is the element obtained from a chosen splitting of $v\colon K^{*}\to \Gamma$ which exists by \cite[Lemma 2.1.15]{MS15}. Note that we used our assumption that $K$ is algebraically closed here by \cref{con:mainConvention}. The spectrum of $\mathcal{A}$ is isomorphic to $\mathbb{A}^{n}_{R}$, and the localization of $\mathcal{A}$ at the ideal $\mathfrak{m}\mathcal{A}$ is a valuation ring that contains $R$. This induces a natural valuation on $K(A)$ that has the desired properties.

  Let $r$ be a real number with $r_{1}<r<r_{2}$ such that $-\mathrm{log}(r)\in\Gamma$. This exists because $K$ is algebraically closed.
We now take $v_{1}=-\mathrm{log}(r)$ and obtain a point $P_{0}$. Any point $P$ in the preimage of $P_{0}$ under the map $Y\to (\mathbb{A}^{n})^{\an}$ then has the desired properties.
\end{proof}

\begin{lemma}\label{lem:FindingRationalPoints}
  Let $P$ be a generic valuation and let $\mathcal{C}\subseteq A$ be a finite non-empty subset. Suppose that $K$ is non-trivially valued and algebraically closed. Then there is a rational point $Q\in \mathbb{B}_{P,\mathcal{C}}$ and an open neighborhood $U\subseteq \mathbb{B}_{P,\mathcal{C}}$ around $Q$.
\end{lemma}
\begin{proof}
    We write $\Gamma\coloneqq \mathrm{val}(K)\subseteq\mathbb{R}$ for the value group of $K$.
    We extend the set $\mathcal{C}$ to a generating set $\mathcal{C}'$ of $A$.  This gives a closed embedding
  \begin{equation*}
    \phi\colon Y\to\mathbb{A}^{m}.
  \end{equation*}
  Let $v\in(\Gamma\cup\{\infty\})^m$ be the tropicalization of $\phi^{\mathrm{an}}(P)$. This lies in the tropicalization of $\phi(Y)$, so by \cite[Theorem 3.2.3]{MS15} we can find a $K$-rational point of $\phi(Y)$ that tropicalizes to $v$. This also gives a $K$-rational point $Q$ of $Y$ and it has the desired properties by construction. The last part now follows from \cref{lem:ApproximationLemma}.
\end{proof}

The overall proof of \cref{thm:TropicalGenericFlatness2} has two main intermediate steps:  Given a generic valuation $P\in Y^\an$, we show that there exists a finite subset $\mathcal C\subseteq A$ such that:

\begin{enumerate}
\item (\cref{lem:GroebnerComplexStable2}) the fiberwise Gr\"obner complex of $I$ is constant on $\mathbb{B}_{P,\mathcal C}$ if $I$ is a homogeneous polynomial ideal. The proof is done by looking at the individual Gr\"obner polyhedra of $I_P$ and requires some Gr\"obner basis arguments, and the homogeneity is necessary for the Gr\"obner complex to be well-defined.
\item (\cref{lem:fiberwiseTropicalizationLocallyConstant}) the fiberwise tropicalization of $X$ is constant on $\mathbb{B}_{P,\mathcal C}$ after extending the parameter space $Y$ using a finite covering $Y'\to Y$. The proof is done by looking at a tropical witness set from \cref{def:AlgebraicWitnesses}, and the covering is necessary in order to regard said witnesses as elements in the coordinate ring. %
\end{enumerate}

Lastly, the proof of \cref{thm:TropicalGenericFlatness2} combines all aforementioned results and shows that any basic open set $\mathbb{B}(\epsilon_{0},\epsilon_{1},h)\subseteq Y^{\an}$ intersects some $\mathbb{B}_{P,\mathcal C}$ that is contained in the tropically flat locus of $X$. %

\subsection*{Stability of Gr\"obner complexes}
Before we prove the stability of Gr\"obner complexes, we need to recall several concepts from \cite[Section 2.4 and 2.5]{MS15}.  In order for said concepts to be well defined, we will regard $I$ as a homogeneous polynomial ideal.

\begin{definition}
  Let $Q\in Y^{\mathrm{an}}$ and $w\in\RR^n$.
  A finite set $G_Q\subseteq I_Q$ is called a \emph{Gr\"obner basis} with respect to $w$, if $I_Q=\langle G_Q\rangle$ and $\initial_w(I_Q)=\langle \initial_w(g)\mid g\in G_Q\rangle$.
  The \emph{Gr\"obner polyhedron} of $I_Q$ around $w$ is $C_w(I_Q) \coloneqq \mathrm{cl}(\{w'\in\RR^n\mid\initial_{w'}(I_Q)=\initial_w(I_Q)\})$, where $\mathrm{cl}(\cdot)$ denotes the closure with respect to the Euclidean topology. The \emph{Gr\"obner complex} of $I_Q$ is denoted by $\Sigma(I_Q)\coloneqq\{C_w(I_Q)\mid w\in\RR^n \}$. If $I_Q$ is homogeneous, then $\Sigma(I_Q)$ is a finite polyhedral complex and $\mathrm{trop}(V(I_Q))$ is the support of a subcomplex of $\Sigma(I_Q)$.
\end{definition}

\begin{lemma}\label{lem:GroebnerBasisStable2}
  Let $K$ be non-trivially valued, let $I$ be a homogeneous ideal. Let $P\in Y^{\mathrm{an}}$ be a generic valuation, and $w\in\RR^n$ a weight vector. Then there is a finite subset $\mathcal G=\{g_1,\dots,g_m\}\subseteq I$ and a finite subset $\mathcal{C}\subseteq A$ such that the following hold:
  \begin{enumerate}
  \item \label{enumitem:GB1} $\mathcal G_P=\{g_{1,P},\dots,g_{m,P}\}$ is a Gr\"obner basis of $I_P$ with respect to $w$,
  \item \label{enumitem:GB2} $\mathcal G_{Q}=\{g_{1,Q},\dots,g_{m,Q}\}$ is a Gr\"obner basis of $I_{Q}$ w.r.t. $w$ for all $Q\in \mathbb{B}_{P,\mathcal{C}}$,
  \item \label{enumitem:GB3} the monomial supports of $\mathcal G_{Q}$ and $\mathcal G_P$ are equal for all $Q\in \mathbb{B}_{P,\mathcal{C}}$,
  \item \label{enumitem:GB4} the valuations of the coefficients of $\mathcal{G}_{Q}$ and $\mathcal G_P$ coincide for all $Q\in \mathbb{B}_{P,\mathcal{C}}$.
  \end{enumerate}
\end{lemma}
\begin{proof}
  Fix an ordering $\succ$ on the monomials in $x_{1},\dots,x_{n}$ and, for $Q\in Y^\an$, let $>$ denote the ordering on polynomials in $k(Q)[x]$ defined using $w$ and the valuation on $k(Q)$ as in \cite[Definition~2.3]{ChanMaclagan2019}.  Let $\mathcal{G}_{P}=\{g_{1,P},\dots,g_{m,P}\}\subseteq I_{P}$ be a Gr\"{o}bner basis with respect to $w$ as computed by \cite[Algorithm 2.9]{ChanMaclagan2019}. In particular, for any two $g_{i,P},g_{j,P}\in \mathcal G_P$ their $S$-polynomial $S(g_{i,P},g_{j,P})$ will have normal form $0$ with respect to $\mathcal G_P$.  By \cite[Algorithm 2.4]{ChanMaclagan2019} this implies that there are $h_{k,P}\in k(Y)[x_1,\dots,x_n]$ such that
  \begin{equation*}
    S(g_{i,P},g_{j,P})=\sum_{k=1}^m h_{k,P}g_{k,P} \quad\text{and}\quad h_{k,P}g_{k,P}\geq{S(g_{i,P},g_{j,P})} \text{ for } k\in[m].
  \end{equation*}
  We may assume without loss of generality that $g_i\coloneqq g_{i,P}$ and $h_i\coloneqq h_{i,P}$ are in $A[x_1,\dots,x_n]\subseteq k(Y)[x_1,\dots,x_n]$. Set $\mathcal G\coloneqq \{g_1,\dots,g_m\}$ and take $\mathcal{C}$ to be the set of all coefficients of $h_{i}$, $g_{i}$ and $S(g_{i},g_{j})$, so that the valuations of these elements are then constant on $\mathbb{B}_{\mathcal{C}}$. Then Conditions \eqref{enumitem:GB1}, \eqref{enumitem:GB3} and \eqref{enumitem:GB4} hold straightforwardly.
  Moreover, for all $Q\in{\mathbb{B}_{\mathcal{C}}}$ we still have
  \begin{equation*}
    S(g_{i,Q},g_{j,Q})=\sum_{k=1}^m h_{k,Q}g_{k,Q} \quad\text{and}\quad h_{k,Q}g_{k,Q}\geq{S(g_{i,Q},g_{j,Q})} \text{ for } k\in[m].
  \end{equation*}
  By \cite[Algorithm 2.9]{ChanMaclagan2019}, this implies that $\mathcal{G}_{Q}\subseteq I_Q$ is a Gr\"{o}bner basis with respect to $w$, hence Condition \eqref{enumitem:GB2} holds also.
\end{proof}

\begin{lemma}\label{lem:GroebnerComplexStable2}
  Let $K$ be non-trivially valued and let $I$ be a homogeneous ideal. Let $P\in{Y^{\mathrm{an}}}$ be a generic valuation. Then there is a finite subset $\mathcal{C}\subseteq A$ such that for all $Q\in \mathbb{B}_{\mathcal{C}}$
  \begin{equation*}
    \Sigma(I_{P})=\Sigma(I_{Q}).
  \end{equation*}
\end{lemma}
\begin{proof}
  As there are finitely many Gr\"obner polyhedra \cite[Theorem 2.5.3]{MS15}, we can pick $w_1,\dots,w_m\in \RR^n$ such that $\Sigma(I_P)=\{C_{w_1}(I_P),\dots,C_{w_m}(I_P)\}$.
  Recall that, by the proof of \cite[Proposition 2.5.2]{MS15}, a Gr\"obner polyhedron $C_w(I_Q)$ is uniquely determined by the monomial support and coefficient valuations of a Gr\"obner basis with respect to $w$.
  The statement hence follows from applying \cref{lem:GroebnerBasisStable2} to all $w_i$ and taking the union of all resulting $\mathcal C$.
\end{proof}

\subsection*{Stability of tropicalizations}
To show the stability of tropicalizations, we need the following lemma to construct suitable points on $X_{P}$ that tropicalize to a given set of weight vectors.  In the lemma, we exploit notation and use $\textrm{val}(\cdot)$ to denote the valuation on all fields.

\begin{lemma}\label{lem:ExtendValuationPoints}
  Let $P\in Y^{\an}$ be a generic valuation. For any finite number of tropical points $w_{1},\dots, w_{k}\in\trop(X_{P})\cap \mathrm{val}(k(P))^n$, there exists a finite extension of valued fields $k(P)\to L$ and points $z_1,\dots,z_k\in X_{P}(L)\subseteq L^n$ such that
  \begin{math}
    \mathrm{val}(z_{j})=w_{j},
  \end{math}
  with $\mathrm{val}(\cdot)$ denoting coordinatewise valuation.
\end{lemma}
\begin{proof}
  Extend the valuation of $k(P)$ to an algebraic closure $\overline{k(P)}$. By the Fundamental Theorem of Tropical Geometry \cite[Theorem 3.2.3]{MS15}, we can find points defined over $\overline{k(P)}$ with the desired properties. As these are defined over a finite extension $k(P)\to L\to \overline{k(P)}$, we obtain the desired statement by restricting the chosen valuation to $L$.
\end{proof}

\begin{definition}\label{def:AlgebraicWitnesses}
  Let $P\in Y^{\an}$ be a generic valuation.  Let $w_1,\dots,w_m\in\RR^n$ such that $\Sigma(I_P)=\{C_{w_1}(I_P),\dots,C_{w_m}(I_P)\}$.  Let $z_{1},\dots ,z_{m}\in X_{P}(L)$ be the points over the finite extension $k(P)\to L$ from \cref{lem:ExtendValuationPoints}. We call $Z\coloneqq \{z_1,\dots,z_m\}$ a \emph{witness set} for $\Trop(X_{P})$.
\end{definition}

Note that the coordinates of the witness set $Z\subseteq X_P(L)$ need not be elements of $A$ or even a localization of $A$.  In order to add $Z$ to $\mathcal C$ for the construction of a new $\mathbb{B}_{P,\mathcal{C}}$, we require the following extension:

\begin{assumption}
  \label{assumption:witnessSet}
  For the remainder of the section, fix a witness set $Z=\{z_1,\dots,z_m\}$ $\subseteq X_P(L)$.  Let $A'$ be the integral closure of $A$ in $L$. This defines a finite normalization morphism $\mathrm{norm}\colon Y'\coloneqq \Spec(A')\to Y=\Spec(A)$, and the extended valuation from \cref{lem:ExtendValuationPoints} directly gives a point $P'\in Y'^{\an}$ mapping to $P\in Y^{\an}$.  Moreover, we can find an open neighborhood $V'\subseteq Y'$ such that $z_{i,j}\in A'_{V'}$. %

  Note that, by Grothendieck's generic flatness theorem, there is open subset $V\subseteq Y$ such that $\mathrm{norm}^{-1}(V)\to V$ is flat and thus open.  By restricting to open subsets of $Y'$ and $Y$, we may therefore assume that $z_{i,j}\in A'$.
\end{assumption}

\begin{lemma}\label{lem:fiberwiseTropicalizationLocallyConstant}
  Let $K$ be non-trivially valued, and let $I\subseteq A'[x_1,\dots,x_n]$.  Let $P\in{Y'^{\mathrm{an}}}$ be a generic valuation. Then there is a finite subset $\mathcal{C}\subseteq A'$ such that for all $Q\in \mathbb{B}_{\mathcal{C}}$
  \begin{equation*}
    \mathrm{trop}(V(I)_{P})=\mathrm{trop}(V(I)_{Q}).
  \end{equation*}
\end{lemma}
\begin{proof}
  By \cite[Proposition 3.2.8]{MS15}, we may assume that $I$ is homogeneous. First, let $\mathcal C_\Sigma\subseteq A'$ be the subset from \cref{lem:GroebnerComplexStable2}, so that $\Sigma(I_P)=\Sigma(I_Q)$ for all $Q\in\mathbb{B}_{\mathcal C_\Sigma}$.
  Second, let $Z=\{z_1,\dots,z_m\}\subseteq X_P(L)$ be the witness set from \cref{assumption:witnessSet}, i.e., $z_i=(z_{i,j})_{j=1,\dots,n}$ and $z_{i,j}\in A\subseteq L$.  For each $z_i\in Z$, we distinguish between two cases.

  If $\val(z_i)\in\trop(V(I)_P)$, we define $\mathcal{C}_i\coloneqq\{z_{i,1},\dots,z_{i,n}\}\subseteq A'$ so that $w_i\in \trop(V(I)_Q)$ for all $Q\in\mathbb{B}_{\mathcal C_i}$.
  If $\val(z_i)\notin\trop(V(I)_P)$, then $\mathrm{in}_{w_i}(I_{P})$ contains a monomial, which means there is a $f_{i}\in I$ such that $\mathrm{in}_{w_i}(f_{i,P})$ is monomial. In that case, we define $\mathcal{C_i}\subseteq A'$ to be the set of coefficients of $f_{i}$ so that $w_i\notin \trop(V(I)_Q)$ for all $Q\in\mathbb{B}_{\mathcal C_i}$.
  We then obtain the statement for $\mathcal C\coloneqq \mathcal C_\Sigma\cup \mathcal C_1\cup\dots\cup\mathcal C_m$.
\end{proof}

\subsection*{Generic tropical flatness}
We combine all previous results for the main theorem of the section:

\begin{theorem}\label{thm:TropicalGenericFlatness2}
  Let $X=V(I)$ and $Y$ be as in \cref{con:mainConvention}.
  Then the tropically flat locus of $X$ contains a dense open subset of $Y^{\mathrm{an}}$. If $K$ is non-trivially valued, then this locus moreover contains a dense open subset of $Y(K)\subseteq Y^\mathrm{an}$.
\end{theorem}
\begin{proof}
  Since we allow valued field extensions in \cref{def:TropicallyFlat} of tropically flatness, we can assume that $K$ is non-trivially valued.

  We first show the statement for $Y'=\Spec(A')$ and $X'=X\times_{Y} Y'$ from \cref{assumption:witnessSet}. Consider a basic open set $\mathbb{B}(r_{1},r_{2},h)\subseteq Y'^{\mathrm{an}}$ for some $r_1,r_2\in\RR_{\geq 0}$ and $h\in{A'}$.
  Using \cref{lem:GenericValuations3}, we can find a generic valuation $P\in \mathbb{B}(h,r_{1},r_{2})$.  Let $f_1,\dots,f_m\in A'[x_1,\dots,x_n]$ such that $f_{1,P},\dots,f_{m,P}$ is a tropical basis of $I_P$, and let $\mathcal C_f\subseteq A'$ be the set of coefficients of $f_1,\dots,f_m$.
  By \cref{lem:fiberwiseTropicalizationLocallyConstant}, there is a $\mathcal C_{\trop}\subseteq A'$ such that $\trop(X'_P)=\trop(X'_Q)$ for all $Q\in\mathbb{B}_{\mathcal{C}_{\trop}}$.
  Set $\mathcal C\coloneqq \{h\}\cup\mathcal C_f\cup\mathcal C_{\trop}$.
  By \cref{lem:FindingRationalPoints}, there is a rational point $Q_{0}\in \mathbb{B}_{\mathcal{C}}$ and an open neighborhood $Q_{0}\in U\subseteq \mathbb{B}_{\mathcal{C}}$.
  We find that $X$ is tropically flat around any point $Q\in{U}$. We conclude that the tropically flat locus of $X'$ contains an open and dense subset of $Y'$ as well as an open and dense subset of $Y'(K)$.

  We now treat the general case.  Consider a basic open set $\mathbb{B}\subseteq Y^\an$ containing a generic valuation $P$.  Let $\mathbb{B}'$ be the preimage of $\mathbb{B}$ under the open normalization map $\phi\colon Y'\to Y$ from \cref{assumption:witnessSet}.  Note that the point $P'$ from \cref{assumption:witnessSet} is in~$\mathbb{B}'$. Using what we proved above, we find an open neighborhood $U'\subseteq \mathbb{B}'$ of $P'$ such that $X'$ is tropically flat over $U'$.  Set $U=\phi(U')$, which is open as $\phi$ is open.  For all $Q'\in U'$ and $Q=\phi(Q')$, we have
  \begin{equation*}
\trop(X'_{Q'})=\trop(X_{Q})=\trop(X'_{P'})=\trop(X_{P}).
  \end{equation*}
  We thus find that $X$ is tropically flat over $U\subseteq \mathbb{B}$. In particular, the tropically flat locus contains an open and dense subset.
\end{proof}
\section{Tropical intersections and generic root counts}\label{sec:TropGeneric}

We now use the material from the previous two sections to show how generic properties of morphisms of schemes can be detected using tropical geometry. We will see that many properties of a single tropical fiber over a tropically flat point propagate to a dense open subset of the parameter space. %
In \cref{sec:GRCTropicalIntersectionNumber}, we prove %
\cref{thm:MainThm1}, which expresses the generic root count as a tropical intersection product. This also gives a standalone proof of Bernstein's theorem, see %
\cref{cor:BKK}. In \cref{sec:TorusEquivariant} we study torus-equivariant and parametrically independent systems, and we prove  \cref{thm:TorusEquivariant}. Finally, we discuss extensions of the results given here to analytic families of polynomial equations.

\subsection{Generic root counts as tropical intersection numbers}\label{sec:GRCTropicalIntersectionNumber}

In this section, we show that generic root counts can be expressed in terms of tropical intersection numbers, provided that we have a tropically transverse intersection around a tropically flat point. This extends Bernstein's theorem to possibly overdetermined families of polynomial equations with non-trivial relations among the coefficients.

We first show that the generic (co-)dimension can be deduced from a tropically transverse intersection around a tropically flat point.
\begin{proposition}\label{pro:DimensionsTransverseIntersections}
    Let $X=\bigcap_{i=1}^k X_{i}$ where $X_i$ is of generic codimension $d_{i}$. Let $P\in Y^{\an}$ around which the $X_{i}$'s are tropically flat, and suppose that the $\trop(X_{i,P})$'s intersect transversally. Then $X$ has generic codimension $\sum_{i=1}^{k}d_{i}$.
\end{proposition}
\begin{proof}
  Let $V$ be a dense open subset of $Y$ over which the $X_{i}$'s and $X$'s attain their generic dimensions, see \cref{lem:GenericDimensions}.
By \cref{cor:TropicallyFlatImpliesConstant}, we can find a valued field extension $K\to M$ and an open neighborhood $U_{M}$ of a point $P_{M}$ lying over $P$ such that the tropicalizations of the $X_{i,Q}$'s for $Q\in U_{M}$ are all equal to the tropicalization of $X_{i,P}$. We now take a point $Q_{0}\in V^{\an}_{M}\cap U_{M}$, which exists by \cref{lem:DensityZariski}.

   Recall the Bieri-Groves theorem, which applied to the irreducible components of a variety implies that the dimension of a variety is equal to the dimension of its tropicalization, see \cite[Theorem 3.3.8]{MS15}.
   Since dimensions are stable under field extensions, we conclude that the tropicalization of $X_{i,Q_{0}}$ is of codimension $d_{i}$.
   By \cite[Theorem 1.2]{OssermanPayne2013} and the transversality of the $\trop(X_{i,P})=\Trop(X_{i,Q})$'s, we have that $\Trop(X_{P})=\bigcap_{i=1}^{k}\Trop(X_{i,P})=\bigcap_{i=1}^{k}\Trop(X_{i,Q_{0}})=\Trop(X_{Q_{0}})$, which is of codimension $\sum_{i=1}^{k}d_{i}$. Since $Q_{0}\in V^{\an}_{M}$, we conclude.
\end{proof}

\begin{remark}
  If the generic fiber $X_\eta$ is empty and $X_P$ is non-empty over some $P\in{Y^{\mathrm{an}}}$, then $X$ is not tropically flat around $P$ by \cref{pro:DimensionsTransverseIntersections}. More generally, if the dimension of the fiberwise tropicalization $\mathrm{trop}(X_{P})$ is higher than the generic dimension, then $X$ is not tropically flat around $P$.
\end{remark}

For the next lemma, we recall the notion of higher intersection multiplicities as in \cite[Section 6.8]{OR2013}. %
Let $X_{1}$ and $X_{2}$ be to subvarieties of complementary dimension in an $n$-dimensional torus $T_K$ over an algebraically closed field $K$ that intersect properly in a zero dimensional set. Write $X=X_{1}\cap X_{2}$. We can view the local rings $\mathcal{O}_{X_{1},x}$ and $\mathcal{O}_{X_{2},x}$ as modules over the ring $\mathcal{O}_{T_K,x}$. In particular, this allows us to define the modules $\mathrm{Tor}_{i}(\mathcal{O}_{X_{1},x},\mathcal{O}_{X_{2},x})$ over $\mathcal{O}_{T_K,x}$.
For every $x\in{X}$, we then define %
\begin{equation*}
    i(x,X_{1}\cdot X_{2})=\sum_{i=0}^{n}(-1)^{i}\mathrm{dim}_K(\mathrm{Tor}_{i}(\mathcal{O}_{{X}_{1},x},\mathcal{O}_{X_{2},x})).
\end{equation*}
Here $\dim_K(\mathrm{Tor}_{i}(\mathcal{O}_{{X}_{1},x},\mathcal{O}_{X_{2},x}))$ denotes the dimension of $\mathrm{Tor}_{i}(\mathcal{O}_{{X}_{1},x},\mathcal{O}_{X_{2},x})$ as a vector space over $K$. %
We define the full intersection number of $X_{1}$ and $X_{2}$ as
\begin{equation*}
    i(X_{1}\cdot X_{2})=\sum_{x\in{X}}i(x,X_{1}\cdot X_{2}).
\end{equation*}
\begin{definition}
Let $X_{1}$ and $X_{2}$ be two closed subschemes of complementary codimension in an $n$-dimensional torus $T_K$ over an algebraically closed field $K$. Let $x\in{X_{1}\cap X_{2}}$.
    We say that the higher intersection multiplicities vanish at $x$ if $$\mathrm{Tor}_{i}(\mathcal{O}_{{X}_{1},x},\mathcal{O}_{X_{2},x})=(0)$$ for $i>0$. Similarly, we say that the higher intersection multiplicities vanish for $X_{1}$ and $X_{2}$ if the higher intersection multiplicities at all $x\in{X}$ vanish. Note that if the higher intersection multiplicities vanish at a point $x$, then we have that
    \begin{equation*}
        i(x,X_{1}\cdot X_{2})=\mathrm{dim}_K\Big(\bigslant{\mathcal{O}_{T_K,x}}{I_{1,x}}\otimes_{K}\bigslant{\mathcal{O}_{T_K,x}}{I_{2,x}}\Big)=\mathrm{dim}_K\Big(\bigslant{\mathcal{O}_{X,x}}{(I_{1,x}+I_{2,x})}\Big),
    \end{equation*}
    where $I_{1}$ and $I_{2}$ are the ideals corresponding to $X_{1}$ and $X_{2}$, and the $I_{i,x}$'s are the localizations of these ideals at $x$.

Suppose we are given closed subschemes $X_{1},\dots,X_{k}$ of complementary codimension in an $n$-dimensional torus $T_K$ over an algebraically closed field $K$ that intersect in a zero-dimensional set. %
Let $D\colon T_K\to T_K^{k}$ be the diagonal map. Then we define
\begin{equation*}
    i(\prod_{i=1}^{k}X_{i})=i(D(T_K)\cdot (X_{1}\times \dots\times X_{k})).
\end{equation*}
We say that the higher intersection multiplicities of the $X_{i}$'s vanish if the higher intersection multiplicities of $D(T_K)$ and $X_{1}\times \dots\times X_{k}$ vanish.
\end{definition}

Using this terminology, we can now prove the following well-known lemma and subsequent main theorem.

\begin{lemma}\label{lem:IntersectionMultiplicities}
  Let $X_1,\dots,X_k$ be of complementary dimension, i.e., $\sum_{i=1}^k\codim(X_{i})=n$, such that $X=\bigcap_{i=1}^k{X_{i}}$ is zero-dimensional. If $X$ lies in the Cohen-Macaulay-locus of every $X_{i}$, then the higher intersection multiplicities vanish.
\end{lemma}
\begin{proof}
  We have to show that the higher intersection multiplicities of $D(T_K)$ and $\prod_{i=1}^k X_{i}$ vanish. The support of the intersection corresponds to $\bigcap_{i=1}^k {X_{i}}$ embedded in the product by the map $D\colon T_K\to{T_K^{k}}$. Note that %
  $\prod_{i=1}^k X_{i}$ is Cohen-Macaulay at every $D(z)$ %
  for $z\in{\bigcap_{i=1}^k{X_{i}}}$. Indeed, this follows from \cite[\href{https://stacks.math.columbia.edu/tag/0C0W}{Lemma 0C0W}]{stacks-project} and  \cite[\href{https://stacks.math.columbia.edu/tag/045T}{Lemma 045T(1)}]{stacks-project}. The lemma now follows from \cite[\href{https://stacks.math.columbia.edu/tag/0B02}{Lemma 0B02}]{stacks-project}.
\end{proof}

\begin{theorem}\label{thm:MainThm1}
  Let $X_1,\dots,X_k$ be generically Cohen-Macaulay, pure and of complementary dimension, and let $X=\bigcap_{i=1}^k X_i$.
  Suppose there is a point $P\in Y^{\an}$ over which the $X_{i}$ are tropically flat and the tropical prevariety $\bigcap_{i=1}^{k}\trop(X_{i,P})$ is bounded.
  Then $X$ is generically finite with generic root count $\ell_{X,\eta}=\prod_{i=1}^{k}\trop(X_{i,P})$.
\end{theorem}
\begin{proof}
Throughout the proof, we will freely apply valued field extensions $K\to L$ as needed. One can easily verify that this does not change the validity of our assumptions and results. For instance, the local  %
rank of $p\colon X\to Y$ provided by Grothendieck's theorem \cite[\href{https://stacks.math.columbia.edu/tag/052A}{Proposition 052A}]{stacks-project} is stable under flat base change, and
being generically Cohen-Macaulay is stable under field extensions by   %
\cite[\href{https://stacks.math.columbia.edu/tag/00RJ}{Lemma 00RJ}]{stacks-project}. %

  Since the $X_{i}$'s are tropically flat over $P$, we can find a valued field extension $K\to  L$, a point $P_{L}$ and an open neighborhood $\mathbb{B}$ of $P_{L}$  such that $\trop(X_{i,Q})=\trop(X_{i,P})$ for all $Q\in \mathbb{B}$ by \cref{cor:TropicallyFlatImpliesConstant}. As mentioned before, we can assume without loss of generality that $K=L$, $P_{L}=P$ and $\mathbb{B}\subseteq Y^{\an}$ is an open neighborhood of $P$.  
  Let $U_{1}\subseteq{Y}$ be a dense open subset over which $p$ is free  \cite[\href{https://stacks.math.columbia.edu/tag/052A}{Proposition 052A}]{stacks-project} and let %
  $U_{2}$ be a dense open subset over which $p$ is Cohen-Macaulay and pure, see Lemmas \ref{lem:GenericallyCM} and \ref{lem:LemmaPurity}. %
  The space  $U^{\mathrm{an}}_{1}\cap{U^{\mathrm{an}}_{2}}$ is dense in $Y^{\mathrm{an}}$ by \cref{lem:DensityZariski}, so that $U^{\mathrm{an}}_{1}\cap U^{\mathrm{an}}_{2}\cap \mathbb{B}$ is non-empty. Let $Q$ be an element of this subset. Note that $\trop(X_{i,P})=\trop(X_{i,Q})$ by construction, so that the tropical prevariety $\bigcap_{i=1}^{k}\trop(X_{i,Q})$ is bounded.  %
  As the %
  the tropical prevariety is bounded, the fibers $X_{i,Q}$'s are pure and their codimensions add up to $n$, we can apply \cite[Corollary 6.13]{OR2013} to find that
  the tropical intersection number is equal to the sum of the algebraic intersection numbers. By \cref{lem:IntersectionMultiplicities}, this sum is equal to the sum of the algebraic lengths. But again using the fact that free modules are stable under base change, we find that this sum is the local rank provided by Grothendieck's theorem \cite[\href{https://stacks.math.columbia.edu/tag/052A}{Proposition 052A}]{stacks-project}. This concludes the proof.
\end{proof}

\begin{remark}
 The tropical intersection number in \cref{thm:MainThm1}
 is an algebraic intersection number in a suitable toric variety: %
 Let $X(\Delta)$ be a toric variety such that $\Delta$ is a compatible compactifying fan for the $\trop(X_{i,P})$'s as in \cite[Section~3]{OR2013}. Then by \cite[Proposition 3.12]{OR2013}, we find that the closures of the $X_{i,P}$'s in $X(\Delta)$ only intersect in the dense torus. In particular, the tropical intersection number in \cref{thm:MainThm1} is equal to the algebraic intersection number  $\prod_{i=1}^{k}\overline{X}_{i,P}$. %

\end{remark}

The following example shows the necessity of tropical flatness in \cref{thm:MainThm1}:

\begin{example}\label{exa:TropicallyFlatIllustration}
  Consider $X=X_1\cap X_2$ as well as $P_\lambda\in Y$ from \cref{ex:runningExampleFiberwiseTropicalization}.  In \cref{ex:runningExampleTropicalBasis}, it is shown that $X_{1}$ and $X_{2}$ are tropically flat around $P_\lambda$, hence \cref{thm:MainThm1} states that the generic root count of $X$ equals $\trop(X_{1,P_\lambda})\cdot \trop(X_{2,P_\lambda})$.  Indeed, the first was determined to be $4$ in \cref{ex:runningExampleGenericRootCount}, and the second was determined to be $4$ in \cref{ex:runningExampleFiberwiseTropicalization}.

  Moreover, as $\lambda\rightarrow\infty$, $P_\lambda$ converges to a point $P_\infty$ with $(a_1b_2-a_2b_1)(P_\infty)=0$, where the root count drops to $\ell_{X,P_\infty}=2$.  As $X_2$ remains tropically flat around $P_\infty$, this shows that $X_1$ is not tropically flat around $P_\infty$.
\end{example}

\begin{example}
  \label{ex:genusTwoExample}
  \label{ex:nonSquare}
  For a non-square example, we intersect a family of curves of genus $2$ in $\mathbb{P}^3$ with a family of hyperplanes.  The family of curves of genus $2$ can be constructed by pushing forward the intersection of a cubic and a quadratic under the Segre map $\mathbb{P}^{1}\times \mathbb{P}^{1}\to \mathbb{P}^{3}$ \cite[Section IV, Remark 6.4.1(c))]{Hartshorne1977}.

  Consider the parameter ring
  \begin{equation*}
    A\coloneqq \CC\{\!\{t\}\!\}\Big[a_{i},b_{j},c_{k}\mid 0\leq{i}\leq{3}, 0\leq{j}\leq{2}. 0\leq k\leq 3\Big],
  \end{equation*}
  and the ideal
  \begin{align*}
    \langle a_{0}x_{0}^{3}+a_{1}x_{0}^2x_{1}+a_{2}x_{0}x_{1}^{2}+a_{3}x_{1}^3 ,\quad b_{0}y_{0}^2+b_{1}y_{0}y_{1}+b_{2}y_{1}^2 \rangle \subseteq A[x_{0},x_{1},y_{0},y_{1}].
  \end{align*}
  In other words, the $a_i$'s parametrize a cubic on the first $\PP^1$, the $b_j$'s parametrize a quadratic on the second $\PP^1$, and the ideal is their intersection in $\mathbb{P}^{1}\times \mathbb{P}^{1}$.  The parameters $c_k$'s will be used later.

  Its preimage under the Segre map $A[w_{0},\dots,w_{4}]\to A[x_{0},x_{1},y_{0},y_{1}]$, is generated by the polynomials
  \begin{align*}
    f_{1}&\coloneqq w_{1}w_{2}-w_{0}w_{3},\\
    f_{2}&\coloneqq a_{0}b_{0}w_{0}^2w_{1} + a_{0}b_{1}w_{0}w_{1}^2 + a_{1}b_{0}w_{0}w_{1}w_{2} + a_{1}b_{1}w_{0}w_{1}w_{3}+ a_{0}b_{2}w_{1}^3 + a_{1}b_{2}w_{1}^2w_{3}\\
         & \qquad + a_{2}b_{0}w_{1}w_{2}^2 + a_{2}b_{1}w_{1}w_{2}w_{3} + a_{2}b_{2}w_{1}w_{3}^2 + a_{3}b_{0}w_{2}^2w_{3} + a_{3}b_{1}w_{2}w_{3}^2 + a_{3}b_{2}w_{3}^3,\\
    f_{3}&\coloneqq a_{0}b_{0}w_{0}^3 + a_{0}b_{1}w_{0}^2w_{1} + a_{1}b_{0}w_{0}^2w_{2} + a_{1}b_{1}w_{0}^2w_{3} + a_{0}b_{2}w_{0}w_{1}^2 + a_{1}b_{2}w_{0}w_{1}w_{3} \\
    &\qquad + a_{2}b_{0}w_{0}w_{2}^2 + a_{2}b_{1}  w_{0}w_{2}w_{3} + a_{2}b_{2}w_{0}w_{3}^2 + a_{3}b_{0}w_{2}^3 + a_{3}b_{1}w_{2}^2w_{3} + a_{3}b_{2}w_{2}w_{3}^2.
  \end{align*}
  Set $Y\coloneqq\Spec(A)$ and $X_1\coloneqq\Spec(A[w^\pm]/(f_1,f_2,f_3))$.  Pick $\lambda>0$.  \cref{fig:genusTwoExample} illustrates the fiberwise tropicalization $\Trop(X_{1,P})$ for $P\in Y^{\an}$ randomly chosen with $\mathrm{val}(a_0(P))=\mathrm{val}(b_0(P))=2\lambda$ and $\mathrm{val}(a_i(P))=\mathrm{val}(b_j(P))=\mathrm{val}(c_k(P))=0$ otherwise.  It consists of four vertices arranged in a quadrilateral, and each vertex is connected to two rays.  As $f_1,f_2,f_3$ are homogeneous, the fiberwise tropicalization is invariant under translation in direction of the all-ones vector $(1,1,1,1)$.

  Consider further the polynomials
  \begin{equation*}
    g_1\coloneqq c_0w_0-c_1\quad \text{and}\quad g_2\coloneqq c_2w_0w_1+t\cdot c_3w_2w_3,
  \end{equation*}
  and $X_2\coloneqq\Spec(A[w^\pm]/(g_1,g_2))$. Then $\Trop(X_{1,P})$ and $\Trop(X_{2,P})$ intersect in two points, of which one diverges as $\lambda\rightarrow\infty$, see \cref{fig:genusTwoExample} also (the lower intersection point diverges, the upper intersection point stays fixed).

  As $P$ was chosen randomly, \cref{thm:TropicalGenericFlatness2} implies that $X_1$ is tropically flat around $P$ with high probability. Additionally, as $X_{2}$ is independent of the parameters, $X_2$ is also tropically flat around $P$.  After verifying by direct computation that $\Trop(X_{1,P})$ and $\Trop(X_{2,P})$ intersect transversally, \cref{thm:MainThm1} implies that the generic root count of $X$ is their tropical intersection product with high probabiliy.

  In general, verifying that $\Trop(X_{1,P})$ and $\Trop(X_{2,P})$ intersect transversally using direct computation can be quite difficult. This is one of the motivations we will introduce the notion of torus equivariance and parametric independence in \cref{sec:TorusEquivariant}.
\end{example}

\begin{figure}[t]
  \centering
  \begin{tikzpicture}[x={(10mm,0mm)},y={(0mm,10mm)}]
    \coordinate (v1) at (0,0);
    \coordinate (v2) at (2,0);
    \coordinate (v3) at (2,2);
    \coordinate (v4) at (0,2);
    \coordinate (r5) at (-1,0);
    \coordinate (r6) at (0,-1);
    \coordinate (r7) at (2,0);
    \coordinate (r8) at (0,1);
    \draw[thick]
    (v1) -- (v2) -- (v3) -- (v4) -- cycle;
    \draw[thick,->]
    (v1) -- ++(r5) node[anchor=east] {$u_3$};
    \draw[thick,->]
    (v1) -- ++(r6) node[anchor=north] {$u_4$};
    \draw[thick,->]
    (v2) -- ++(r6) node[anchor=north] {$u_4$};
    \draw[thick,->]
    (v2) -- node[midway] (i1) {} ++(r7) node[anchor=west] {$u_1$};
    \draw[thick,->]
    (v3) -- node[midway] (i2) {} ++(r7) node[anchor=west] {$u_1$};
    \draw[thick,->]
    (v3) -- ++(r8) node[anchor=south] {$u_2$};
    \draw[thick,->]
    (v4) -- ++(r5) node[anchor=east] {$u_3$};
    \draw[thick,->]
    (v4) -- ++(r8) node[anchor=south] {$u_2$} node[anchor=north east] {$\Trop(X_{1,P})$};
    \draw[thick,blue]
    (3,-1.5) -- (3,3.5) node[anchor=north west] {$\Trop(X_{2,P})$};
    \fill (v1) circle (2pt);
    \fill (v2) circle (2pt);
    \fill (v3) circle (2pt);
    \fill (v4) circle (2pt);
    \fill[white,draw=black] (i1) circle (2pt);
    \fill[white,draw=black] (i2) circle (2pt);
    \node[anchor=south west] at (v1) {$v_3$};
    \node[anchor=south east] at (v2) {$v_4$};
    \node[anchor=north east] at (v3) {$v_1$};
    \node[anchor=north west] at (v4) {$v_2$};
    \node[anchor=west,inner sep=0pt] at (6,1)
    {
      \begin{math}
        \begin{aligned}
          v_1 &=\lambda\cdot (0,0,0,0)=0\\
          v_2 &=\lambda\cdot (1,1,-1,-1)\\
          v_3 &=\lambda\cdot (2,0,0,-2)\\
          v_4 &=\lambda\cdot (1,-1,1,-1)\\
          u_1 &=(-1,-1,1,1)\\
          u_2 &=(-1,1,-1,1)\\
          u_3 &=(1,1,-1,-1)\\
          u_4 &=(1,-1,1,-1)\\
        \end{aligned}
      \end{math}
    };
  \end{tikzpicture}\vspace{-3mm}
  \caption{The fiberwise tropicalizations of \cref{ex:genusTwoExample} and their intersection.}
  \label{fig:genusTwoExample}
\end{figure}

We close this subsection with a few easy corollaries of \cref{thm:TropicalGenericFlatness2} and \cref{thm:MainThm1} combined.
\cref{cor:CombineGenericandMain} states that tropical flatness for \cref{thm:MainThm1} is not necessary, as long as the other condition of having a bounded prevariety holds in an open set.  This is significant, as tropical flatness can be difficult to test.

\begin{corollary}\label{cor:CombineGenericandMain}
  Let $X_1,\dots,X_k$ be generically Cohen-Macaulay, pure and of complementary dimension, and let $X=\bigcap_{i=1}^k X_i$.
  Suppose that $\bigcap_{i=1}^{k}\trop(X_{i,P})$ is bounded for $P$ in a non-empty open subset of $Y^{\mathrm{an}}$.
  Then there is a non-empty open subset $U$ of $Y^{\mathrm{an}}$ such that the generic root count $\ell_{X,\eta}$ is $\prod_{i=1}^{k}\trop(X_{i,Q})$ for all $Q\in{U}$.
\end{corollary}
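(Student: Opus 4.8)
The plan is to combine the generic tropical flatness theorem \cref{thm:TropicalGenericFlatness} with \cref{thm:MainThm1} in a straightforward way, the only subtlety being the bookkeeping of which open subsets of $Y^{\mathrm{an}}$ we intersect. First I would reduce, as in the proof of \cref{thm:MainThm1}, to the case where $K$ is complete, algebraically closed and non-trivially valued; this is harmless since all the relevant notions (generic root count, being generically Cohen--Macaulay, being generically pure) are stable under valued field extension, and the hypotheses of the corollary are preserved.

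Next, for each $i=1,\dots,k$, \cref{thm:TropicalGenericFlatness} provides an open dense subset $U_{i}\subset{Y^{\mathrm{an}}}$ over which $X_{i}\to{T_{Y}}$ is tropically flat. Let $W\subset{Y^{\mathrm{an}}}$ be the non-empty open subset on which $\bigcap_{i=1}^{k}\mathrm{trop}(X_{i,P})$ is bounded, which exists by hypothesis. Since $Y^{\mathrm{an}}$ is irreducible ($Y$ being integral) and each $U_{i}$ is open dense, the intersection $U \coloneqq W\cap\bigcap_{i=1}^{k}U_{i}$ is non-empty and open. For any $Q\in{U}$, all three hypotheses of \cref{thm:MainThm1} are satisfied with $P$ replaced by $Q$: hypotheses (1) and (2) are part of the standing assumptions inherited from \cref{thm:MainThm1}, and hypothesis (3) holds because $Q$ lies in every $U_{i}$ and in $W$. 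Applying \cref{thm:MainThm1} at the point $Q$ then yields that $X\to{Y}$ is generically finite with $\ell_{X/Y}=\prod_{i=1}^{k}\mathrm{trop}(X_{i,Q})$.

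Finally, I would remark that the value $\prod_{i=1}^{k}\mathrm{trop}(X_{i,Q})$ is automatically independent of the choice of $Q\in{U}$, since it equals the intrinsic quantity $\ell_{X/Y}$; no separate argument is needed. The main point to be careful about is simply ensuring all the relevant loci are dense open in the irreducible space $Y^{\mathrm{an}}$ so that their finite intersection with the boundedness locus $W$ is non-empty, and that the reduction to $K$ complete and algebraically closed does not disturb the boundedness hypothesis --- but boundedness of a tropical prevariety is preserved under base change since fiberwise tropicalizations are, by \cite[Theorem 3.2.4]{MS15}. There is no serious obstacle here; the corollary is a formal consequence of the two theorems once the open sets are lined up correctly.
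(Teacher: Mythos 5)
Your approach is exactly what the paper intends (it offers no proof beyond the remark ``by combining Theorems~\ref{thm:TropicalGenericFlatness} and~\ref{thm:MainThm1}''), and the core argument---intersect the dense open tropically flat loci furnished by \cref{thm:TropicalGenericFlatness} for each $X_{i}$ with the open boundedness locus $W$, then apply \cref{thm:MainThm1} at any point of the resulting non-empty open set---is sound. Two small things are worth tightening. First, the justification ``since $Y^{\mathrm{an}}$ is irreducible'' is off: the Berkovich topology on $Y^{\mathrm{an}}$ is not irreducible when $K$ is non-trivially valued (disjoint non-empty opens abound). What actually carries the argument is that each $U_{i}$ is \emph{dense} open, and in any topological space a finite intersection of dense open sets is dense open, hence meets the non-empty open $W$. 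You do invoke density, so this is only a mislabelled reason, not a broken step. Second, the initial global reduction to $K$ complete, algebraically closed and non-trivially valued is both unnecessary and mildly hazardous: \cref{thm:TropicalGenericFlatness} and \cref{thm:MainThm1} are each stated for an arbitrary non-archimedean base field, and \cref{thm:MainThm1} performs its own internal base change when needed. If you replace $K$ by $L$ at the outset, you end up producing an open $U_{L}\subset Y_{L}^{\mathrm{an}}$, and the corollary asks for an open subset of $Y^{\mathrm{an}}$; descending $U_{L}$ to an open subset of $Y^{\mathrm{an}}$ is an extra step you never address (openness of $Y_{L}^{\mathrm{an}}\to Y^{\mathrm{an}}$ is not something you should take for granted). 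Simply drop the reduction and work over $K$ throughout, letting each cited theorem manage its own extensions, and the argument is clean.
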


The next \cref{cor:NoBoundedTrop} shows that if the generic root count does not attain the tropical intersection number, then fiberwise tropicalizations will generically not %
intersect in a bounded set.
This why we will be turning to tropical modifications in \cref{sec:modifications}.

\begin{corollary}\label{cor:NoBoundedTrop}
  Let $X_1,\dots,X_k$ be generically Cohen-Macaulay, pure and of complementary dimension, and let $X=\bigcap_{i=1}^k X_i$.  Suppose that $\ell_{X,\eta}\neq \prod_{i=1}^{k}\trop(X_{i,Q})$ for $P$ in a dense open subset $W$ of $Y^{\mathrm{an}}$. Then there is a dense open subset $U\subseteq Y^{\mathrm{an}}$ such that the tropical prevariety $\bigcap_{i=1}^{k}\trop(X_{i,Q})$ is unbounded for $Q\in U$.
\end{corollary}
\begin{proof}
  Consider the intersection $V=(\bigcap_{i=1}^{k} V_{i})\cap W$ of the dense open loci from \cref{thm:TropicalGenericFlatness2} and $W$. This is again dense, and the tropical fibers of $X$ over $V$ are necessarily unbounded by \cref{thm:MainThm1}.
\end{proof}

Finally, as an easy consequence of \cref{thm:MainThm1}, we obtain another proof of the Bernstein-Koushnirekno Theorem:

\begin{corollary}\label{thm:SquareBounded}
Let $X_{i}=V(f_{i})$ be $n$ hypersurfaces in an $n$-dimensional relative torus given by polynomials $f_{i}=\sum_\alpha c_{i,\alpha}x^{\alpha}$, and let $X=\bigcap_{i=1}^{n} X_i$.
Suppose that the tropical prevariety $\bigcap_{i=1}^{n}\trop(V(f_{i})_P)$ is bounded for some $P\in{Y^{\mathrm{an}}\backslash\bigcup_{\alpha,i}{V(c_{i,\alpha})}}$.
Then the generic root count of $X$ is the normalized mixed volume $\mathrm{MV}(f_{1},\dots,f_{n})$.
\end{corollary}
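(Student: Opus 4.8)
The plan is to obtain this as the hypersurface case of \cref{thm:MainThm1} and then to recognize the resulting tropical intersection number as a mixed volume. First I would verify the hypotheses of \cref{thm:MainThm1} at the given point $P$. For a single Laurent polynomial $f_i$, the singleton $\{f_i\}$ is automatically a tropical basis for the principal ideal $(f_i)$ --- by Kapranov's theorem $\trop(V(f_i)_Q)$ is the tropicalization of $f_{i,Q}$ for every $Q$ --- so the only substantive requirement for a local tropical basis is the non-degeneracy of the coefficients. Since $P\in Y^{\mathrm{an}}\setminus\bigcup_{\alpha,i}V(c_{i,\alpha})$, each $V(f_i)$ is therefore tropically flat at $P$; this is precisely \cref{rem:HypersurfaceGlobalTropicalBasis}. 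Together with the assumed boundedness of $\bigcap_{i=1}^n\trop(V(f_i)_P)$, condition (3) of \cref{thm:MainThm1} holds.

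For condition (2): since $c_{i,\alpha}(P)\neq 0$ for each $\alpha$ in the support of $f_i$, no such coefficient is zero in the domain $A$, so the fiber $V(f_i)_\eta$ over the generic point $\eta$ of $Y$ is cut out in the torus over $k(\eta)$ by a non-unit non-zerodivisor. This relative hypersurface is a complete intersection, hence Cohen--Macaulay, and it is pure of dimension $n-1$ because the torus is irreducible and only one equation is imposed. By \cref{lem:GenericallyCM} and, for the purity, \cref{lem:LemmaPurity} applied to $y=\eta$, the morphism $V(f_i)\to Y$ is generically Cohen--Macaulay and generically pure of relative dimension $k_i=n-1$; with $n$ hypersurfaces one has $\sum_{i=1}^n(n-k_i)=n$. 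Condition (1) holds by hypothesis. Hence \cref{thm:MainThm1} applies and gives that $X\to Y$ is generically finite with $\ell_{X/Y}=\prod_{i=1}^n\trop(V(f_i)_P)$.

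It remains to identify the right-hand side with $\mathrm{MV}(f_1,\dots,f_n)$. The tropical hypersurface $\trop(V(f_i)_P)$ has Newton polytope the convex hull of the support of $f_i$, which is independent of $P$ since every coefficient of $f_i$ is non-zero at $P$. The tropical intersection number of $n$ tropical hypersurfaces of complementary codimension in $\RR^n$ equals the normalized mixed volume of their Newton polytopes --- the tropical Bernstein theorem, a standard fact in tropical intersection theory (see, e.g., \cite{MS15}). Combining the two identities yields $\ell_{X/Y}=\mathrm{MV}(f_1,\dots,f_n)$.

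The one point requiring care is this final identification: one must check that the normalization of the tropical intersection product used in \cref{thm:MainThm1} is exactly the one for which a zero-dimensional stable intersection of tropical hypersurfaces is counted by the normalized mixed volume, rather than by a combinatorial multiple of it. The remaining ingredients are routine --- tropical flatness of a hypersurface away from its coefficient locus is \cref{rem:HypersurfaceGlobalTropicalBasis}, and the Cohen--Macaulayness and equidimensionality of a relative hypersurface cut out by a non-vanishing equation are standard.
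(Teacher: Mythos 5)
Your proof is correct and follows essentially the same route as the paper: verify that the $V(f_i)$ are tropically flat at the non-degenerate point $P$ (via \cref{rem:HypersurfaceGlobalTropicalBasis}), generically Cohen--Macaulay, and pure of relative dimension $n-1$, apply \cref{thm:MainThm1} to express $\ell_{X/Y}$ as a tropical intersection number, and then identify that number with the normalized mixed volume --- the paper invokes the tropical Bernstein theorem in the form \cite[Theorem 4.6.8]{MS15}, which is exactly the normalization-matching citation your final caveat calls for. Your write-up is a more explicit unpacking of the same argument.
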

\begin{proof}
  The $V(f_{i})$'s are tropically flat (\cref{cor:TropConstant}), %
  generically Cohen-Macaulay and %
  pure of relative dimension $n-1$ over the given locus. %
  We can thus use \cref{thm:MainThm1} to conclude that the generic root count is the tropical intersection number. By \cite[Theorem 4.6.8]{MS15}, %
  this is the mixed volume, so we obtain the statement of the corollary.
\end{proof}

\begin{corollary}[Bernstein-Kushnirenko]\label{cor:BKK}
Let $X$ be a square universal family with fixed monomial supports. Then the generic root count equals $\ell_{X,\eta}=\mathrm{MV}(f_{1},\dots,f_{n})$.
\end{corollary}
\begin{proof}
  One easily finds a point $P\in{Y^{\mathrm{an}}}$ for which the tropicalizations intersect in finitely many points.   The statement then follows from \cref{thm:SquareBounded}.

  The generic finiteness of the tropical prevariety will be explained in greater generality in \cref{sec:TorusEquivariant} using the notion of torus-equivariance, see \cref{prop:TranslationProperCM}. Here, we note that the $X_{i}$'s are indeed all torus-equivariant, since the coefficients in front of the monomials are all free and independent.
\end{proof}

\subsection{Torus-equivariant systems and generic root counts}\label{sec:TorusEquivariant}
In this section, we introduce the notion of torus-equivariance and parametric independence, which give a natural condition under which \cref{thm:MainThm1} holds. It will be used in Sections \ref{sec:modifications} and \ref{sec:linearDependencies} to obtain generic root counts for certain classes of systems.

\begin{definition}

  Recall that $T$ is an $n$-dimensional torus over $Y$, i.e., $T=T_K\times Y$ where $T_K$ is the $n$-dimensional torus over $K$ with $K$-valued points $T_{K}(K)=(K^\ast)^n$.
  Let
  \begin{math}
    m\colon T_K\times T_K\to T_K
  \end{math}
  be the natural multiplication map on $T_K$.
  We say that $X\subseteq T$ is \emph{torus-equivariant}, if there is a group action on the parameter space $\rho\colon T_K\times Y\to Y$ such that, under the two morphisms
  \begin{center}
    \begin{tikzpicture}
      \node (Z1) {$T_K\times T_K\times Y$};
      \node[anchor=base west, xshift=20mm] (Z2) at (Z1.base east) {$ T_K\times T_K\times Y$};
      \draw[->] (Z1.north east) to[out=30,in=150] node[above] {$h_1$} (Z2.north west);
      \draw[->] (Z1.south east) to[out=330,in=210] node[below] {$h_2$} (Z2.south west);
      \node[anchor=south,yshift=10mm] (txPtop) at (Z1.north) {$(t,x,P)$};
      \node[anchor=south,yshift=10mm] (txPtargettop) at (Z2.north) {$(t,m(t,x),P)$};
      \draw[|->] (txPtop) -- (txPtargettop);
      \node[anchor=north,yshift=-10mm] (txPbot) at (Z1.south) {$(t,x,P)$};
      \node[anchor=north,yshift=-10mm] (txPtargetbot) at (Z2.south) {$(t,x,\rho(t,P))$};
      \draw[|->] (txPbot) -- (txPtargetbot);
    \end{tikzpicture}
  \end{center}
  we have $h_{1}(T_K\times X)=h_{2}(T_K\times X)$ as closed subschemes of $T_K\times T_K\times Y$.   Here the top and bottom maps are defined on $R$-valued points of $T_{K}\times T_{K}\times Y$, where $R$ is a $K$-algebra\footnote{This uniquely determines the desired morphisms $h_{i}$ by the Yoneda lemma. Alternatively, we can define it on $K$-valued points and use that this defines morphisms uniquely for maps of varieties over $K$. }.   If the actions are clear from context, we will also simply write $t\cdot x:=m(t,x)$ and $t\cdot P\coloneqq\rho(t,P)$.
\end{definition}

\begin{example}
  Consider the parameter ring $A=K[a_{0},a_{1},a_{2}]$ and the polynomial $f=a_{0}x_1+a_{1}x_{1}x_{2}+a_{2}x_{2}\in A[x_{1}^{\pm},x_{2}^{\pm}]=A_{0}$. We write $C=A_{0}[t_{1}^{\pm},t_{2}^{\pm}]$ for the coordinate ring of $Z\coloneqq T_K\times T_K\times Y$. The map $h_{1}\colon Z\to Z$ corresponds to the following map on the coordinate rings
  \begin{equation*}
    h^{*}_{1}\colon C\to C,\quad a_i\mapsto a_i, \quad t_i\mapsto t_i, \quad x_i\mapsto t_{i}x_i.
  \end{equation*}
  Let
  \begin{equation*}
    g\coloneqq \dfrac{a_{0}}{t_{1}}x_1+\dfrac{a_{1}}{t_{1}t_{2}}x_{1}x_{2}+\dfrac{a_{2}}{t_{2}}x_{2},
  \end{equation*}
  so that $h^{*}_{1}(g)=f$ and thus $V(g)=h_{1}(T_K\times X)$. We now similarly define a map $h^{*}_{2}:C\to C$  on the level of coordinate rings by
  \begin{align*}
    h^{*}_{2}\colon C\to C,\quad a_{i}\mapsto
    \begin{cases}
      t_{1}a_{0}&\text{for }i=0\\
      t_{1}t_{2}a_{1}&\text{for }i=1\\
      t_{2}a_{2}&\text{for }i=2\\
    \end{cases}
    , \quad t_i\mapsto t_i, \quad x_i\mapsto x_i.
  \end{align*}
  Note that this is induced by a group action $\rho$ of $T$ on $Y$. We then similarly have $h^{*}_{2}(g)=f$, so that $h_{1}(T\times X)=h_{2}(T\times X)$, and thus $X$ is torus-equivariant with respect to $\rho$. %
\end{example}

\begin{remark}
  Let $K\to L$ be a field extension and let $(t,P)$ be an $L$-valued point of $T_K\times Y$. We then have an equality of closed subschemes
  \begin{equation*}
    t\cdot X_{P}=X_{t\cdot P}.
  \end{equation*}
  In other words, we can obtain toric translates of the fiber $X_P$ through an appropriate action on the parameter $P$. In particular, if we represent
  a point in $T^{\an}\times Y^{\an}$ by an $L$-valued point $(t,P)\in T_{K}(L)\times Y(L)$, then both $X_{P}$ and $X_{t\cdot P}$ can be considered as schemes over $L$. This allows us to consider $(t\cdot X_{P})^{\an}$, which can be identified in a natural way with $t\cdot X^{\an}_{P}$ and $X^{\an}_{t\cdot P}$.   %
\end{remark}

To prove the main result of this subsection, we will need the following \cref{prop:TranslationProperCM} that essentially states that toric translations are sufficient to guarantee the requirement for \cref{thm:MainThm1}.

\begin{lemma}\label{lem:GenericStableIntersection}
  Let $\Sigma_{1}$ and $\Sigma_{2}$ be two balanced  polyhedral complexes in $\mathbb{R}^{n}$. There is a dense open subset $U\subseteq{\mathbb{R}^{n}\times\mathbb{R}^{n}}$ such that for all $(\lambda_{1},\lambda_{2})\in{U}$ the translates $\lambda_{1}+\Sigma_{1}$ and $\lambda_{2}+\Sigma_{2}$ intersect transversally. This set moreover contains $\{0\}\times{U_{1}}$ and $U_{2}\times{\{0\}}$ for suitable dense open subsets $U_{i}\subseteq \mathbb{R}^{n}$. %
\end{lemma}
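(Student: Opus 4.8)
The plan is to reduce the statement to the standard fact that the stable intersection of two balanced polyhedral complexes in $\mathbb{R}^n$ can be computed by generic translation (see \cite[Section 3.6]{MS15} or \cite{FS97}). First I would recall that for polyhedra $\sigma_1\in\Sigma_1$ and $\sigma_2\in\Sigma_2$, the translates $\lambda_1+\sigma_1$ and $\lambda_2+\sigma_2$ meet transversally — meaning either they do not meet, or they meet in a point with $\operatorname{aff}(\sigma_1)$ and $\operatorname{aff}(\sigma_2)$ spanning a translate of $\mathbb{R}^n$ of the expected dimension — precisely when $(\lambda_2-\lambda_1)$ avoids a certain proper affine subspace $W_{\sigma_1,\sigma_2}\subseteq\mathbb{R}^n$ (the ``bad'' set where the affine hulls fail to be in general position relative to the offset). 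Concretely, $\lambda_1+\operatorname{aff}(\sigma_1)$ and $\lambda_2+\operatorname{aff}(\sigma_2)$ fail to intersect generically iff $\lambda_2-\lambda_1$ lies in the linear subspace $\operatorname{lin}(\sigma_1)+\operatorname{lin}(\sigma_2)+(\text{offset})$, and when that sum is not all of $\mathbb{R}^n$ this is a proper affine subspace $W_{\sigma_1,\sigma_2}$; when the sum is all of $\mathbb{R}^n$ the intersection is automatically transversal. Since $\Sigma_1,\Sigma_2$ are finite complexes, there are finitely many such pairs, hence finitely many proper affine subspaces $W_{\sigma_1,\sigma_2}\subseteq\mathbb{R}^n$, and I would set
\begin{equation*}
  U=\{(\lambda_1,\lambda_2)\in\mathbb{R}^n\times\mathbb{R}^n : \lambda_2-\lambda_1\notin W_{\sigma_1,\sigma_2}\text{ for all pairs }(\sigma_1,\sigma_2)\}.
\end{equation*}
This is the complement in $\mathbb{R}^n\times\mathbb{R}^n$ of a finite union of proper affine subspaces (each of the form $\{(\lambda_1,\lambda_2):\lambda_2-\lambda_1\in W\}$, which is proper since $W$ is proper), hence dense and open.

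Next I would verify the two containments $\{0\}\times U_1\subseteq U$ and $U_2\times\{0\}\subseteq U$. Taking $\lambda_1=0$, the condition defining $U$ becomes $\lambda_2\notin W_{\sigma_1,\sigma_2}$ for all pairs, so $U_1:=\mathbb{R}^n\setminus\bigcup W_{\sigma_1,\sigma_2}$ works and is a dense open subset of $\mathbb{R}^n$ as a complement of finitely many proper affine subspaces; symmetrically, taking $\lambda_2=0$ gives the condition $\lambda_1\notin -W_{\sigma_1,\sigma_2}$ and hence $U_2:=\mathbb{R}^n\setminus\bigcup(-W_{\sigma_1,\sigma_2})$. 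This also makes transparent why the lemma is phrased with a two-parameter family rather than a single translation: only the difference $\lambda_2-\lambda_1$ matters, so the ``bad'' locus in $\mathbb{R}^n\times\mathbb{R}^n$ is a union of ``diagonal'' affine subspaces, and restricting either coordinate to $0$ recovers the usual one-sided generic-translation statement.

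The only genuine content — and the step I expect to require the most care — is pinning down the correct notion of ``intersect transversally'' for polyhedral complexes (as opposed to for single polyhedra or for linear subspaces) and checking that the finitely many affine-subspace conditions above really do guarantee it cell-by-cell: one must ensure that for every pair of cells the relative interiors, not just the affine hulls, are in general position, and handle lower-dimensional cells and boundary incidences so that no unexpected intersection appears on a face. This is routine polyhedral bookkeeping once the affine hulls are controlled, since general position of affine hulls forces any intersection of cells to occur in their relative interiors and to have the expected dimension; I would cite \cite[Section 3.6]{MS15} for the precise transversality setup and the statement that the resulting intersection, with the product multiplicities, computes the stable intersection. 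The remark following the statement that ``a similar result also holds for $k$ polyhedral complexes'' is handled by the same argument applied to the finitely many $k$-tuples of cells, intersecting the corresponding finitely many proper affine subspace conditions.
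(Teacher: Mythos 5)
Your proof is correct and takes essentially the same route as the paper, which simply cites the proof of \cite[Proposition 3.6.12]{MS15}: the bad locus is a finite union of proper affine subspaces, one per pair of cells whose linear spans fail to fill $\mathbb{R}^n$, and since these conditions depend only on $\lambda_2-\lambda_1$, the two-parameter family and its slices $\{0\}\times U_1$ and $U_2\times\{0\}$ come for free. Your unpacking of that proof (including the observation that conditions on all face pairs, not just maximal cells, force intersections into relative interiors) is the intended argument; the paper leaves it implicit in the citation.
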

\begin{proof}
  This follows from the proof of \cite[Proposition 3.6.12]{MS15}.
\end{proof}

\begin{lemma}\label{prop:TranslationProperCM}
  Let $X_{1}, \dots, X_{k}$ be pure closed subschemes of complementary dimension in $T_K$.
  Then there is a non-empty open subset $U\subseteq (T^{\an}_K)^{k-1}$ such that for $t_1\coloneqq 1\in T^{\an}_K$ and $t=(t_{2},\dots ,t_{k})\in{U}$, we have
  \begin{enumerate}
  \item the intersection $\bigcap_{i=1}^k t_{i}X^{\an}_{i}$ is finite and lies in the Cohen-Macaulay locus of each $t_iX^{\an}_i$,
  \item the tropicalizations of the $t_{i}X^{\an}_{i}$'s intersect transversally.
  \end{enumerate}
\end{lemma}
\begin{proof}
  Let $Z_i$ be the non-Cohen-Macaulay locus of $X_i$, which are proper closed subsets of the $X_{i}$'s by \cite[\href{https://stacks.math.columbia.edu/tag/00RG}{Lemma 00RG}]{stacks-project}. Consider the stable intersection of the tropicalizations of $X_{1}, X_{2}, \dots, X_{k-1}$ and $Z_{k}$. By \cite[Theorem 3.6.10]{MS15} and the assumption on the codimensions, this is empty. The other $k-2$ combinations where $X_{i}$ is replaced by $Z_{i}$ also give empty stable intersections. For each of these, we obtain a dense open subset $V_{j}\subseteq\mathbb{R}^{n(k-1)}=(\mathbb{R}^n)^{k-1}$ such that
  \begin{equation*}
   \trop(X_{1})\cap  (\lambda_{j}+\trop(Z_{j}))\cap\bigcap_{i\neq{1,j}}(\lambda_{i}+  \trop(X_{i}))=\emptyset
  \end{equation*}
  for $(\lambda_{2},\dots,\lambda_{k-1})\in V_{j}$ by \cref{lem:GenericStableIntersection}. %
We intersect these dense open subsets to obtain a dense open subset $V$. Now let $V'$ be the dense open subset obtained from \cref{lem:GenericStableIntersection} applied to $\trop(X_{1}),\dots,\trop(X_{k})$. Then $V'\cap V$ is again a dense open subset and   %
we immediately find that $U=\trop^{-1}(V\cap V')\subseteq (T^{\an}_{K})^{k-1}$ has the desired properties.
\end{proof}

Finally, we require the notion parametric independence, which will ensure that we can torically translate the fibers independently of each other.

\begin{definition}\label{def:ParametricallyIndependent}
  Let $X_1,\dots,X_k$ be closed subschemes of $T$. We say that $X_1,\dots,X_k$ are \emph{parametrically independent} if there are parameter spaces $Y_i$ and closed subschemes $X_i'\subseteq T_{Y_i}\coloneqq T_K\times Y_i$ such that $Y = \prod_{i=1}^k Y_i$ and $X_i = X'_{i}\times_{Y_{i}}Y$.
\end{definition}

\begin{proposition}\label{thm:TorusEquivariant}
   Let $X_1,\dots,X_k$ be closed subschemes of $T$ and let $X=\bigcap_{i=1}^k X_i$. Suppose the $X_{i}$'s %
  are parametrically independent, generically pure of complementary dimension, and that $X_2,\dots,X_k$ are torus-equivariant. Then $X$ is generically finite with generic root count $\ell_{X,\eta}=\prod_{i=1}^{r}\trop(X_{i,P})$ for $P\in U^{\an}$, where $U\subseteq Y$ is a Zariski dense open subset of $Y$. %
\end{proposition}
\begin{proof}
  For every $X_{i}$, we take a set of generators $f_{i,j}$ of the corresponding ideal. We write $U_{0}$ for the open subset of $Y$ over which the coefficients of the monomials of the $f_{i,j}$'s are non-zero. Let $U_{1}\subseteq{Y}$ be the open subset provided by Grothendieck's generic freeness theorem and let $U_{2}$ be the dense open subset over which the $X_{i}$ are pure.

  Let $U=U_{0}\cap{}U_{1}\cap{U_{2}}$. We will show as in the proof of \cref{thm:MainThm1} that the root count of $X$ over any $Q\in U^{\an}$ is equal to the tropical intersection number of the $\trop(X_{i,Q})$'s.  %
  Let $Q\in U^{\an}$. %
  By taking a non-archimedean field extension, we can assume that $Q$ is rational and consider $Q$ as a closed point of $U$. Since the family is parametrically independent, we can find points $Q_{i}\in{Y^{\mathrm{an}}_{i}}$ that give rise to $Q$. As above, we will consider the $Q_{i}$'s as closed points of $Y_{i}$.
  We write $X_{i,Q_{i}}$ for the fibers of the families and $Z_{i,Q_{i}}$ for their non-CM-loci. We note here that the $Q_{i}$'s (and thus the $X_{i,Q_{i}}$'s) are fixed for the remainder of the proof.

Recall that $T_{K}$ is the $n$-dimensional torus over $K$.
  By \cref{prop:TranslationProperCM}, there is an open subset $V$ of $(t_{1},\dots,t_{k})\in(T_{K}^{\mathrm{an}})^k$ such that %
  the $t_{i}X^{\an}_{i,Q}$'s meet in the CM-locus of each. Moreover, their tropicalizations meet transversally in finitely many points.
  Consider the torus-action
  \begin{equation*}
    T_{K}\times{Y_{i}}\to{Y_{i}}
  \end{equation*}
  for each $i$. Applying the projection maps $Y\to{Y_{i}}$ to the dense open subset $U$, we obtain dense open subsets $U_{i}\subseteq{Y_{i}}$.

  Consider the map $T_{K}\to{Y_{i}}$ sending $(t_{i})\mapsto (t_{i}\cdot Q_{i})$. Here we used the action of $T_{K}$ on every $Y_{i}$.   %
  The inverse image of $U_{i}$ under this map is open and non-empty, and thus dense by \cref{lem:DensityZariski}. Its analytification thus intersects the projection $V^{\an}_{i}$ of $V^{\an}$. By doing this for all $i$, we obtain a new point $Q'\in V^{\an}$ with induced $Q'_{i}\in Y^{\an}_{i}$ such that the $X_{i,Q'_{i}}$'s meet in the CM-locus of each.

  The $\trop(X_{i,Q'_{i}})$'s are translates of the $\trop(X_{i,Q_{i}})$'s so that the tropical intersection number is unchanged.  Using \cref{lem:IntersectionMultiplicities}, we then see that the root count of $X$ over $Q'$ is the same as the global intersection multiplicity of the $X_{i,Q'}$'s. But by \cite[Corollary 6.13]{OR2013}, this is the tropical intersection number of the $\trop(X_{i,Q'})$'s, which is the tropical intersection number of the $\trop(X_{i,Q})$'s, as desired.  %
\end{proof}

\begin{example}
  All examples so far satisfy the requirements for \cref{thm:TorusEquivariant}:
  Both $X_1$ and $X_2$ from \cref{ex:runningExampleFiberwiseTropicalization} as well as $X_1$ and $X_2$ from \cref{ex:nonSquare} are parametrically independent because the parameters in the definitions of the ideals of $X_1$ and $X_2$ are disjoint.  Moreover, in both cases, $X_2$ is torus-equivariant.
\end{example}

We conclude this section with two remarks on \cref{thm:TorusEquivariant}.

\begin{remark}
  Torus-equivariance is needed in \cref{thm:TorusEquivariant} for avoiding the non-Cohen-Macaulay loci.  For systems that are Cohen-Macaulay everywhere, such as square systems, it already suffices if the tropicalizations are torus-equivariant, i.e., if for generic $P\in Y$ and every $t\in T_K$, there is a point $Q\in Y$ such that $\trop(X_{Q})=\trop(t\cdot X_{P})=\trop(t)+\trop(X_{P})$.
\end{remark}

\begin{remark}\label{rem:TropicallyFlatTorusEquivariant}
In this remark, we explain how the Zariski dense open subset from  \cref{thm:TorusEquivariant} can be interpreted in terms of tropically flat points, and in terms of ordinary $K$-valued points, even when $K$ is trivially valued.

Suppose the $X_{i}$'s are tropically flat around $P$. Then the generic root count is the tropical intersection number of the $\trop(X_{i,P})$. Indeed, the tropicalizations of the $X_{i}$'s are locally constant by \cref{cor:TropConstant}, and thus they will intersect the dense open subset from \cref{thm:TorusEquivariant}. We thus see that any tropically flat point $P$ automatically gives rise to the generic root count.

    Note that \cref{thm:TorusEquivariant} does not require $K$ to be non-trivially valued. In particular, we find the following. Let $U\subseteq Y$ be the Zariski open dense subset of \cref{thm:TorusEquivariant}. Then for any $P\in U(K)\subseteq U^{\an}$, we have that the conclusion of \cref{thm:TorusEquivariant} holds. These $P$ form an open dense subset of the variety $Y$ in the classical sense, so that the generic root count is realized as a tropical intersection product by a generic set of classical points.
\end{remark}

\begin{remark}\label{rem:AnalyticFamilies}
  We note here that the results in Section \ref{sec:TropGeneric} can be generalized to analytic families of polynomial equations. For many applications in practice this is important, since the functions in the parameters naturally contain analytic functions. For instance, one can consider the system
  \begin{align*}
    f_{1}&=\sin(a_{1}+a_{2})x^2+\sin(a_{1}a_{2})y^2+\cos(a_{1})x+a_{4}y+a_{5},\\
    f_{2}&=\cos(b_{1})x^2+\cos(b_{1}+b_{2})y^2+b_{3}x+b_{4}y+b_{5}
  \end{align*}
  over the ring $A=\mathbb{C}[[a_{i},b_{i}]]$, where $\mathbb{C}$ is trivially valued. This is a $\mathbb{C}$-affinoid domain, so the material in \cite[Section 2]{Berkovich1993} is again applicable. Note that the coefficients of the monomials in these types of equations can satisfy algebraic relations that are not always apparent. The tropical material presented here is however directly applicable, and we can find a ring homomorphism $A\to\mathbb{C}[[t]]$ with corresponding point $P$ such that the fiberwise tropical prevariety over $P$ is finite. This then implies that the generic root count is the mixed volume of the Newton polytopes of the polynomials, which is $4$.
  More generally, one can consider polynomial equations defined over affinoid $K$-algebras $\mathcal{A}$, where $K$ is any (complete) non-archimedean field. The material in this paper directly extends to this more general scenario.
\end{remark}
\section{Generic root counts of square systems}\label{sec:modifications}
In this section, we focus on square systems for which we can express the generic root count in terms of tropical intersection numbers.  These systems include the steady-state equations of chemical reaction networks and the birational intersection indices $[L_{1},\dots,L_{n}]$ studied by Kaveh and Khovanskii in \cite{KavehKhovanskii2010,KavehKhovanskii2012}, two applications which will be discussed in more detail in \cref{sec:linearDependencies}.

As seen in \cref{cor:NoBoundedTrop}, tropical hypersurfaces of square systems whose generic root count is below the mixed volume will not intersect each other transversally. This shows that the individual equations are bad for our tropical techniques, which is why we turn to appropriate reembeddings, also referred to as \emph{tropical modifications} \cite{Mikhalkin2006,Kalinin15}.

Recall \cref{con:mainConvention}, namely that we work with an integral affine parameter space $Y=\Spec(A)$, a subscheme $X=\Spec(A[x_1^\pm,\dots,x_n^\pm]/I)$ of a relative torus $T=\Spec(A[x_1^\pm,\dots,x_n^\pm])$ over $Y$. Similar to the works of Kaveh and Khovanskii \cite{KavehKhovanskii2010,KavehKhovanskii2012}, we may need to restrict to a dense open subset $U\subseteq T$ and consider the generic root count of $X\cap U$ instead of $X$.

\begin{definition}\label{def:UniversalModification}
  Let $f_1,\dots,f_n\in A[x_1^\pm,\dots,x_n^\pm]$. Choose $p_{i,j}\in{A}$ and $q_{j}\in K[x_{1}^{\pm},\dots,x_{n}^{\pm}]$ such that
  \begin{equation}\label{eq:polynomialRepresentation}
    f_i = \sum_{j=1}^m p_{i,j}\cdot q_{j}.
  \end{equation}
  Note that some $p_{i,j}$ may be zero and the $q_j$'s are not necessarily pairwise distinct.

  For any open subset $U$ of $T$, we write $X\cap U\coloneqq X\cap{U}$, which we again consider as a $Y$-scheme. Let
  \begin{equation*}
    \widehat C \coloneqq A\Big[x_i^\pm, w_{j}^\pm \mid i\in[n], j\in[m]\Big],
  \end{equation*}
  and consider in $\widehat C$
  \begin{equation*}
    \hat{f}_{i}\coloneqq\sum_{j=1}^{m} p_{i,j}w_{j} \text{ for } i\in[n]\qquad\text{and}\qquad \hat{h}_{j}\coloneqq w_{j}-q_{j} \text{ for } j\in[m].
  \end{equation*}
  Let $\widehat T\coloneqq\Spec(\widehat C)$ be a larger relative torus over $Y$, and consider the subscheme $\widehat X\coloneqq\Spec(\widehat B)\subseteq \widehat T$ where $\widehat B\coloneqq \widehat C/\langle\hat f_{i}, \hat h_{j}\mid i\in[n],j\in[m]\rangle$.  We have a natural decomposition $\widehat X=\widehat X_\lin\cap\widehat X_\nlin$ for $\widehat X_{\lin} = V(\widehat I_{\lin})$ and $\widehat X_{\nlin} = V(\widehat I_{\nlin})$ where $\widehat I_{\lin}=\langle \hat f_1,\dots,\hat f_n\rangle$ and $\widehat I_{\nlin} = \langle \hat h_{1},\dots,\hat h_m\rangle$. Note that $\widehat X_{\lin}$ is linear, and that $\widehat X_{\nlin}$ is constant over $Y$, which makes $\widehat X_{\lin}$ and $\widehat X_{\nlin}$ parametrically independent. We refer to $\widehat{X}$ as the \emph{modification} of $X$ derived from the representation in \cref{eq:polynomialRepresentation}.
\end{definition}

Note that the modification $\widehat X$ depends on the chosen $p_{i,j}$'s and $q_j$'s. If the subschemes $\hat{X}_{\mathrm{lin}}$ and $\hat{X}_{\mathrm{nlin}}$ satisfy the conditions of \cref{thm:TorusEquivariant}, then we immediately obtain a formula for its generic root count:

\begin{corollary}\label{cor:genericRootCountModification}
  Suppose $\widehat{X}_{\mathrm{lin}}$ is torus-equivariant and of generic codimension $n$. Then $\widehat{X}$ is generically finite and $\ell_{\widehat X,\eta}=\trop(\widehat{X}_{\mathrm{lin},P})\cdot\trop(\widehat{X}_{\mathrm{nlin},P})$ for $P\in Y^{\mathrm{an}}$ generic.
\end{corollary}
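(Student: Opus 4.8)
The plan is to deduce the statement directly from \cref{thm:TorusEquivariant}, applied to the decomposition $\widehat X_{\lin},\widehat X_{\nlin}$ of $\widehat X\to\widehat T_Y$ built in \cref{def:UniversalModification}. Here there are $k=2$ pieces, the ambient relative torus $\widehat T_Y=\Spec(\widehat C)$ has relative dimension $n+m$ over $Y$, the morphism $\widehat X=\Spec(\widehat B)\to Y=\Spec(A)$ is affine and of finite type, and $\widehat X$ is the scheme-theoretic intersection $\widehat X_{\lin}\cap\widehat X_{\nlin}$ inside $\widehat T_Y$ because the ideal of $\widehat X$ is $\widehat I_{\lin}+\widehat I_{\nlin}$. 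Thus condition~(1) of \cref{thm:MainThm1} and the fact that $\widehat X\to\widehat T_Y$ is an intersection of the $\widehat X_{\lin},\widehat X_{\nlin}$ are immediate, and it remains to verify the relative dimensions, generic Cohen-Macaulayness, generic purity, algebraic independence, and torus-equivariance required by \cref{thm:TorusEquivariant}.

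I would first handle the two pieces separately. Since $\widehat h_j=w_j-q_j$ with $q_j\in K[x_1^\pm,\dots,x_n^\pm]$, eliminating the $w_j$ gives an isomorphism $\widehat X_{\nlin}\cong\Spec(A[x_1^\pm,\dots,x_n^\pm])=T_{Y,n}$, so $\widehat X_{\nlin}$ is smooth over $K$; in particular it is Cohen-Macaulay and pure of relative dimension $n$ over $Y$, hence of codimension $m$ in $\widehat T_Y$. For the linear part I would examine the generic fiber of $\widehat X_{\lin}=V(\widehat f_i\mid 1\le i\le n)$ over the generic point $\eta$ of $Y$. The forms $\widehat f_i=\sum_j p_{i,j}w_j$ involve only the variables $w_j$, so this fiber has the shape $(L\cap\mathbb{G}_m^m)\times\mathbb{G}_m^n$ inside $\widehat T_{k(\eta)}$, where $L\subseteq\A^m_{k(\eta)}$ is the linear subspace cut out by the $\widehat f_i$ and $\mathbb{G}_m^n$ is the torus in the $x$-coordinates. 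The assumption that $\widehat X_{\lin}$ has generic codimension $n$ says exactly that this fiber is non-empty of dimension $m$, which forces $\dim L=m-n$ and $L\cap\mathbb{G}_m^m\neq\emptyset$; the generic fiber is then a smooth, geometrically irreducible variety of dimension $m$. By \cref{lem:GenericallyCM} the morphism $\widehat X_{\lin}\to Y$ is therefore generically Cohen-Macaulay, and by \cref{lem:LemmaPurity} applied at $y=\eta$ (so that $\overline{\{\eta\}}=Y$) it is generically pure of relative dimension $m$, i.e.\ of codimension $n$ in $\widehat T_Y$. The two codimensions $n$ and $m$ sum to $n+m=\dim\widehat T_Y$, so the decomposition is of complementary codimension.

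Finally, algebraic independence was already noted in \cref{def:UniversalModification}: with $Y_1=Y$, $Y_2=\Spec(K)$, $X'_1=\widehat X_{\lin}\subseteq\widehat T_{Y_1}$ and $X'_2=V(w_j-q_j)\subseteq\Spec(K[x_i^\pm,w_j^\pm])$ one has $Y_1\times_K Y_2=Y$ with $X'_1\times_{Y_1}Y=\widehat X_{\lin}$ and $X'_2\times_{Y_2}Y=\widehat X_{\nlin}$. Since $k=2$, only $k-1=1$ of the two families must be torus-equivariant, and $\widehat X_{\lin}$ is so by hypothesis. All hypotheses of \cref{thm:TorusEquivariant} are now met, so its conclusion applies: $\widehat X\to Y$ is generically finite and, for $P$ in an open dense subset of $Y^{\mathrm{an}}$, the generic root count $\ell_{\widehat X/Y}$ equals the tropical intersection number $\trop(\widehat X_{\lin,P})\cdot\trop(\widehat X_{\nlin,P})$. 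The only step that asks for genuine (if modest) thought rather than bookkeeping is the identification of the generic fiber of $\widehat X_{\lin}$ and the extraction of its generic Cohen-Macaulayness and purity from the codimension hypothesis; everything else is already packaged into \cref{thm:TorusEquivariant} and \cref{thm:MainThm1}.
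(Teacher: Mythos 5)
Your proof is correct and follows the same route as the paper, namely reducing to \cref{thm:TorusEquivariant} and verifying its hypotheses for the decomposition $\widehat X_\lin,\widehat X_\nlin$; the paper compresses this into a single line ("we only need to verify that $\widehat X_\nlin$ is generically pure and of the right codimension"), whereas you spell out all the verifications explicitly. The one small slip is that $\widehat X_\nlin$ is not all of $T_{Y,n}$ but the open subset where $q_1\cdots q_m\neq 0$ (since the $w_j$ are invertible in $\widehat T_Y$), which is exactly what the paper says ("isomorphic to an open subset of the $n$-dimensional torus over $Y$") and does not affect the Cohen--Macaulayness or purity argument.
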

\begin{proof}
  To use \cref{thm:TorusEquivariant}, we only have to verify that $\widehat{X}_{\mathrm{lin}}$ and $\widehat{X}_{\mathrm{nlin}}$ are generically pure and of complementary dimensions.  Generic purity is straightforward for the linear $\widehat{X}_{\mathrm{lin}}$ and it follows for $\widehat{X}_{\mathrm{nlin}}$ from the fact that is isomorphic to an open subset of the $n$-dimensional torus over $Y$.  As for the complimentary dimensions, we have $\codim(\widehat{X}_{\mathrm{lin}})+\codim(\widehat{X}_{\mathrm{nlin}})=n+m=\dim(\widehat T)$.
\end{proof}

\noindent
We now focus on three aspects of \cref{cor:genericRootCountModification}
which will serve as guides for the remainder of this section:
\begin{enumerate}
\item[\customlabel{enumitem:genericCodimension}{(a)}] The assumption that $\widehat{X}_{\mathrm{lin}}$ is of generic codimension $n$.
\item[\customlabel{enumitem:torusEquivariance}{(b)}] The assumption that $\widehat{X}_{\mathrm{lin}}$ is torus-equivariant.
\item[\customlabel{enumitem:compatibleSubset}{(c)}] The fact that \cref{cor:genericRootCountModification} gives a formula for $\ell_{\widehat X,\eta}$ and not $\ell_{X,\eta}$.
\end{enumerate}
In general, Assumptions \ref{enumitem:genericCodimension} and \ref{enumitem:torusEquivariance} need not be satisfied. %
However, \cref{lem:GenFiniteImpliesGenCI} shows that Assumption \ref{enumitem:genericCodimension} is satisfied in all cases of interest and \cref{lem:TorusEquivariantModifications} shows that Assumption \ref{enumitem:torusEquivariance} is guaranteed by a set of natural algebraic conditions on the $p_{i,j}$. For \ref{enumitem:compatibleSubset}, we will see that $\ell_{X\cap U,\eta}=\ell_{\widehat{X},\eta}$ for a dense open subset $U\subseteq{T}$. This open subset also plays an important role in \cite{KavehKhovanskii2010,KavehKhovanskii2012}.

\begin{lemma}\label{lem:GenFiniteImpliesGenCI}
  Suppose that $X\cap U$ is generically finite for some dense open $U$.  Then $\widehat{X}_{\mathrm{lin}}$ is of generic codimension $n$.
\end{lemma}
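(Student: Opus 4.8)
The plan is to relate the generic fiber of $\widehat X_{\lin}\to Y$ directly to the generic fiber of $X_U\to Y$ and transport finiteness (hence the codimension statement) along this relation. First I would pass to the generic point $\eta$ of $Y$ and work over the function field $k(\eta)=K(A)$. Over $\eta$, the scheme $\widehat X_{\nlin}$ is cut out by the equations $w_j = q_j(x)$, so the projection $\widehat T_Y\to T_Y$ forgetting the $w_j$-coordinates restricts to an isomorphism $\widehat X_{\nlin,\eta}\xrightarrow{\sim}T_{Y,\eta}$; under this isomorphism $\widehat X_\eta = \widehat X_{\lin,\eta}\cap\widehat X_{\nlin,\eta}$ is identified with $V(f_1,\dots,f_n)_\eta = X_\eta$, since substituting $w_j = q_j$ into $\hat f_i = \sum_j p_{i,j}w_j$ recovers $f_i = \sum_j p_{i,j}q_j$ by \cref{eq:polynomialRepresentation}. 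Because $X_U\to Y$ is generically finite and $U$ is dense open, $X_\eta$ is a finite (hence zero-dimensional) $k(\eta)$-scheme; therefore $\widehat X_\eta$ is zero-dimensional as well.

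Next I would use this to bound the dimension of $\widehat X_{\lin,\eta}$. The scheme $\widehat X_{\lin,\eta}$ sits in the $(n+m)$-dimensional torus $\widehat T_{Y,\eta}$ and is cut out by the $n$ linear equations $\hat f_i$, so every irreducible component has codimension at most $n$, i.e. dimension at least $m$. On the other hand, $\widehat X_{\nlin,\eta}$ has dimension exactly $n$ (it is a torus), and $\widehat X_{\lin,\eta}\cap\widehat X_{\nlin,\eta} = \widehat X_\eta$ is zero-dimensional. Since the intersection of the two subschemes inside the $(n+m)$-dimensional ambient torus has dimension $0$, the dimension inequality for intersections in a smooth variety gives $\dim\widehat X_{\lin,\eta} + \dim\widehat X_{\nlin,\eta} - (n+m) \le \dim\widehat X_\eta = 0$, hence $\dim\widehat X_{\lin,\eta}\le m$. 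Combined with the lower bound $\dim\widehat X_{\lin,\eta}\ge m$ coming from it being cut out by $n$ equations, we get $\dim\widehat X_{\lin,\eta} = m$, i.e. $\widehat X_{\lin}\to Y$ has generic codimension exactly $n$. Purity of dimension of the generic fiber then follows from the fact that each component meets the lower bound, so there can be no embedded or lower-dimensional components; this in turn shows the $\hat f_i$ form a regular sequence at the generic point, so $\widehat X_{\lin}\to Y$ is generically a complete intersection.

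The main obstacle I anticipate is making the dimension-of-intersection argument rigorous over the non-algebraically-closed field $k(\eta)$ and, more importantly, ensuring that "dimension $n$" upgrades to "complete intersection": a codimension-$n$ subscheme cut out by $n$ elements need not a priori be a complete intersection if the scheme is non-reduced or has embedded components. The cleanest route is to argue at the level of the local ring of $\widehat X_{\lin,\eta}$ at each of its (finitely many, by the above, actually positive-dimensional but finitely many components intersecting $\widehat X_{\nlin,\eta}$) generic points: the ideal is generated by $n$ elements and has height $n$ in a regular local ring of dimension $n+m$, so by Krull's height theorem these $n$ elements form a regular sequence there, giving a local complete intersection; shrinking $Y$ to the open locus where this holds yields the statement. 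I would also double-check that "generically a complete intersection of generic codimension $n$" is exactly the hypothesis needed to feed back into \cref{cor:genericRootCountModification} via \cref{thm:TorusEquivariant}, so that no stronger global statement is required.
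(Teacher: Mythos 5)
Your plan is genuinely different from the paper's, which goes through a one-line linear-algebra argument: because the $\hat f_i$ are \emph{linear} in the $w_j$ over $A$, ``$\widehat X_\lin$ is generically a complete intersection of codimension $n$'' is equivalent to ``the $\hat f_i$ are $k(\eta)$-linearly independent,'' and a $k(\eta)$-linear relation $\sum_i \lambda_i \hat f_i = 0$ forces $\sum_i \lambda_i p_{i,j}=0$ for each $j$, hence $\sum_i \lambda_i f_i = \sum_j (\sum_i \lambda_i p_{i,j})q_j = 0$, killing the equations $f_i$ down to fewer than $n$ and contradicting generic finiteness. No intersection with $\widehat X_\nlin$ is needed at all.

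Your dimension-counting route has a real gap in its first step. First, the stated isomorphism $\widehat X_{\nlin,\eta}\xrightarrow{\sim}T_{Y,\eta}$ is not quite right: $\widehat X_\nlin$ lives inside the torus where the $w_j$ are invertible, so forgetting the $w_j$ identifies $\widehat X_{\nlin,\eta}$ with the open locus $\bigcap_j D(q_j)_\eta \subsetneq T_{Y,\eta}$, and correspondingly $\widehat X_\eta$ is identified with $X_\eta\cap\bigcap_j D(q_j)_\eta$, not with $X_\eta$. Second, and more seriously, the inference ``$X_U\to Y$ generically finite with $U$ dense open $\Rightarrow X_\eta$ finite'' is false: a dense open $U$ can miss a positive-dimensional component of $X_\eta$ entirely (e.g.\ $X = V(x-1)$ in $\mathbb{G}_m^2$ with $U = D(x-1)$). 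Even with the corrected identification you would need $X_\eta\cap\bigcap_j D(q_j)_\eta$ to be finite, and the hypothesis only gives you finiteness of $X_\eta\cap U_\eta$ for one unspecified dense open $U$, which is not the same set. (Additionally, the dimension-of-intersection inequality you invoke is only meaningful when the intersection is nonempty, which is not guaranteed in the degenerate case.) So without further assumptions the ``$\dim\widehat X_\eta=0$'' input to your argument is not available, and the proof as written does not go through; the paper's linear-dependence argument sidesteps all of this by never passing through $\widehat X_\nlin$.
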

\begin{proof}
  Suppose that $\widehat{X}_{\mathrm{lin}}$ is not of generic codimension $n$. Then there is a linear relation over the function field of $Y$ among the $\hat{f}_{i}$'s. But this implies that there is a linear relation among the $f_{i}$'s, contradicting the fact that $X\cap U$ is generically finite.
\end{proof}

The following lemma offers an easy criterion under which the resulting $\widehat{X}_{\mathrm{lin}}$ is torus-equivariant.

\begin{lemma}\label{lem:TorusEquivariantModifications}
  Suppose that $Y=\mathrm{Spec}(A)$, where $A=K[a_{1},\dots,a_{m}]$. Suppose that the $p_{i,j}$'s in \cref{def:UniversalModification} are linear and that there are subrings $A_{1},\dots,A_m\subseteq{A}$ generated by linear polynomials with $A=A_{1}\otimes_K\dots\otimes_KA_m$ and $p_{i,j}\in{A_{j}}$.  Then $\widehat{X}_{\mathrm{lin}}$ is torus-equivariant.
\end{lemma}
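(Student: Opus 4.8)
The plan is to exhibit an explicit torus action on $\widehat T_Y$ together with a compatible action on $Y$ and verify the equivariance condition $(\widehat X_{\lin})_{\lambda\cdot P}=\lambda\cdot(\widehat X_{\lin})_P$ fiberwise. The key structural input is the tensor decomposition $\bigotimes_K A_j=A$: write the variables of $A_j$ as $a_{j,1},\dots,a_{j,n_j}$, so the $p_{i,j}$ are linear forms purely in the $a_{j,\bullet}$. First I would note that $\widehat X_{\lin}=V(\hat f_i)$ with $\hat f_i=\sum_{j=1}^m p_{i,j}w_j$ lives in the torus $\widehat T_Y=\Spec(A[x_1^\pm,\dots,x_n^\pm,w_1^\pm,\dots,w_m^\pm])$, and crucially the $\hat f_i$ involve only the $w_j$ coordinates and the parameters $a_{j,\bullet}$, not the $x_i$. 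So the $x_i$ part of the torus acts trivially and we only need the $w_j$-coordinates of $\widehat T_Y$ to act.

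The action I would use is: let $\lambda=(\lambda_{w_1},\dots,\lambda_{w_m})$ (the $w$-part of a point of $\widehat T_Y$) act on $\widehat T_Y$ by scaling $w_j\mapsto \lambda_{w_j}w_j$, and act on $Y$ by the Hopf-algebra comultiplication sending each linear form $p_{i,j}\in A_j$ to $\lambda_{w_j}^{-1}\cdot p_{i,j}$ — more precisely, since the $p_{i,j}$ for fixed $j$ are linear forms in the variables $a_{j,1},\dots,a_{j,n_j}$, we scale every variable $a_{j,k}$ by $\lambda_{w_j}^{-1}$. This is well-defined precisely because $A=\bigotimes_K A_j$ means the variable $a_{j,k}$ sits in a single tensor factor, so there is no ambiguity in assigning it the weight $\lambda_{w_j}^{-1}$; this is where the hypothesis $\bigotimes_K A_j=A$ is used in an essential way. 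Homogeneity and linearity of the $p_{i,j}$ guarantee that under this scaling $p_{i,j}\mapsto \lambda_{w_j}^{-1}p_{i,j}$, hence $\hat f_i=\sum_j p_{i,j}w_j\mapsto \sum_j \lambda_{w_j}^{-1}p_{i,j}\cdot\lambda_{w_j}w_j=\hat f_i$, so the defining equations of $\widehat X_{\lin}$ are preserved as a set (in fact individually). Translating this invariance of the ideal into the fiberwise statement $(\widehat X_{\lin})_{\lambda\cdot P}=\lambda\cdot(\widehat X_{\lin})_P$ for every $L$-valued point $(\lambda,P)$ is then a formal check: a point $z$ in the fiber over $P$ maps under $\lambda$ to a point over $\lambda\cdot P$ lying on the same equations, and conversely.

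The main obstacle, and the step I would be most careful about, is checking that the two actions are genuinely compatible as an action of the group scheme $\widehat T_Y\times_Y Y\to Y$ in the sense of the definition of torus-equivariance — i.e.\ that the comultiplication on $A$ is a coaction (associativity and counit axioms) and that it is compatible with the structure map $A\to \widehat C$. Because the weights assigned to the $a_{j,k}$ depend only on $j$ and the factorization $A=\bigotimes_K A_j$ is into subrings generated by the relevant linear forms, these axioms reduce to the corresponding statements for the standard scaling action of a torus on affine space, so no real difficulty arises; but one must phrase it carefully so that the identification $A[x^\pm,w^\pm]\otimes_A A = A[x^\pm,w^\pm]$ used to define the Hopf maps is consistent. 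Once the action is in place, everything else is the routine fiberwise verification above, and I would conclude that $\widehat X_{\lin}$ is torus-equivariant.
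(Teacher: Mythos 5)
Your construction is exactly the paper's: scale $w_j\mapsto\lambda_j w_j$ and, using that each $p_{i,j}$ is linear and homogeneous in the generators of the tensor factor $A_j$ alone, scale those generators by $\lambda_j^{-1}$ so that each $\hat f_i$ is fixed; the tensor decomposition $A=\bigotimes_K A_j$ is what makes the weight assignment well-defined. This matches the paper's proof, which you have simply spelled out in more detail (including the correct observation that the $x$-coordinates act trivially on $Y$).
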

\begin{proof}
  To define the action, we consider a set of independent linear generators $z_{j,k}$ of $A_{j}$.
  The torus action sends a vector $(w_{i})$ to $(\lambda_{i}w_{i})$. We then send $z_{j,k}$ to $z_{j,k}/\lambda_{k}$. One easily checks that $\widehat{X}_{\mathrm{lin}}$ is torus-equivariant with respect to this action.
\end{proof}

\begin{definition}\label{rem:TropicallyModifiable}
  If there exist a presentation as in \cref{eq:polynomialRepresentation} such that the resulting $\widehat{X}_{\mathrm{lin}}$ is torus-equivariant and of generic codimension $n$, then we say that $X$ is \emph{tropically rectifiable}.   %
\end{definition}

We now address point \ref{enumitem:compatibleSubset}.
As with the previous two, there is no guarantee that $\ell_{\widehat{X},\eta}=\ell_{X,\eta}$.
Instead, we will see that generic root count of the modification $\widehat X$ is the generic root count of $X\cap U$ for a dense open $U\subseteq T$. In the following the we will show what $U$ is, why $U$ is necessary, and when $X\cap U=X$.

\begin{theorem}\label{thm:GRCModification}
  Suppose that $X$ is tropically rectifiable and let $U\coloneqq \bigcap_{i=1}^{m} D(q_{i})$ for $q_{i}$ as in \cref{eq:polynomialRepresentation}. Then $X\cap U$ is generically finite and  $\ell_{X\cap U,\eta}=\ell_{\widehat{X},\eta}=\trop(\widehat{X}_{\mathrm{lin},P})\cdot \trop(\widehat{X}_{\mathrm{nlin},P})$ for $P\in Y^{\mathrm{an}}$ generic.
\end{theorem}
\begin{proof}
  Let $k(P)\to L$ be an algebraic closure of $k(P)$. Then the
  $L$-valued points of $\widehat{X}_P$ correspond exactly to the $L$-valued points of $(X\cap U)_P$ since the $q_{j}$'s are nonzero. Moreover, the multiplicities are preserved by the linear nature of the modification, hence the generic root counts coincide.
\end{proof}

\begin{example}\label{exa:LocalizeGRC}
  Note that without passing to $X\cap U$, \cref{thm:GRCModification} is not true in general. For instance, consider the system
  \begin{align*}
    f_{1}=a_{1}(x-1)+a_{2}(y-1) \qquad \text{and} \qquad
    f_{2}=a_{3}(x-1)+a_{4}(y-1)
  \end{align*}
  over $Y=\mathrm{Spec}(A)$ with $A=K[a_{1},a_{2},a_{3},a_{4}]$. The unique solution of this system for every choice of parameters with $a_{1}a_{4}-a_{2}a_{3}\neq{0}$ is $(1,1)$. The tropical intersection number of the modification is however $0$, which gives the number of solutions with $x\neq{1}$ or $y\neq{1}$.
\end{example}

We would now like to extend the open subset from \cref{thm:GRCModification} so that the tropical intersection number derived from $\widehat{X}_{\mathrm{lin}}$ and $\widehat{X}_{\mathrm{nlin}}$ still gives a valid generic root count. We start with a basic tool in extending this root count.

\begin{lemma}\label{lem:CombineOpenSetsGRC}
  Suppose that $U_{i}\supseteq{U_{0}}\coloneqq {\bigcap_{j=1}^{m}D(q_{j})}$ are open dense subsets of $T$ such that the $X\cap U_i$'s are generically finite with  $\ell_{X\cap U_i,\eta}=\ell_{X\cap U_0,\eta}$. Let $U\coloneqq\bigcup_{i} U_{i}$. Then $X\cap U$ is generically finite with $\ell_{X\cap U,\eta}=\ell_{X\cap U_0,\eta}$.
\end{lemma}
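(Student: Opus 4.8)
The plan is to reduce the whole statement to an elementary fact about finite schemes over the residue field $k(\eta)$ of the generic point $\eta$ of $Y$ (which exists since $Y$ is integral). First I would record that, for each $i$, the inclusion $U_{0}\subseteq U_{i}$ of opens in $T_{Y}$ is an open immersion, hence so is $X_{U_{0}}=X\cap U_{0}\hookrightarrow X\cap U_{i}=X_{U_{i}}$; since open immersions are stable under base change, $(X_{U_{0}})_{\eta}$ is an open subscheme of the finite $k(\eta)$-scheme $(X_{U_{i}})_{\eta}$.

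The first key step is to show that $(X_{U_{0}})_{\eta}=(X_{U_{i}})_{\eta}$ for every $i$. Here I would use that a finite $k(\eta)$-scheme is the spectrum of a finite product of local Artinian $k(\eta)$-algebras, so its underlying space is finite and discrete; consequently an open subscheme is the disjoint union of a subset of its connected components, and its length is the sum of the lengths of those components. By hypothesis $(X_{U_{0}})_{\eta}$ and $(X_{U_{i}})_{\eta}$ both have length $\ell_{X_{U_{0}}/Y}$, so the open-and-closed complement of $(X_{U_{0}})_{\eta}$ in $(X_{U_{i}})_{\eta}$ has length zero and is therefore empty; hence $(X_{U_{0}})_{\eta}=(X_{U_{i}})_{\eta}$.

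The second step assembles the fibers. Since $U=\bigcup_{i}U_{i}$ we have $X_{U}=X\cap U=\bigcup_{i}X_{U_{i}}$ as open subschemes of $X$, and forming the fiber over $\eta$ commutes with this union, so
\[
(X_{U})_{\eta}=\bigcup_{i}(X_{U_{i}})_{\eta}=\bigcup_{i}(X_{U_{0}})_{\eta}=(X_{U_{0}})_{\eta}.
\]
In particular $(X_{U})_{\eta}$ is a finite $k(\eta)$-scheme, and as $X_{U}\to Y$ is of finite type (being a locally closed subscheme of $T_{Y}$ over $Y$, or simply an open subscheme of $X$), it follows that $X_{U}\to Y$ is generically finite with generic root count $\ell_{X_{U}/Y}$ equal to the length of the $k(\eta)$-scheme $(X_{U})_{\eta}$, namely $\ell_{X_{U_{0}}/Y}$.

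I expect the only genuinely delicate point to be the first step: making precise that an open subscheme of a finite scheme over a field with the same length must be the whole scheme. This is exactly where the hypothesis $\ell_{X_{U_{i}}/Y}=\ell_{X_{U_{0}}/Y}$ is consumed — dropping it would leave only the inclusion $(X_{U_{0}})_{\eta}\subseteq(X_{U_{i}})_{\eta}$ and the conclusion would fail — but since it follows at once from the structure theory of Artinian rings, no serious obstacle arises.
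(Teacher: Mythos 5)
Your proof is correct and is essentially the same argument as the paper's: the paper's one-line proof asserts that every point of the generic fiber of some $X_{U_i}$ already lies in the generic fiber of $X_{U_0}$, which is exactly your key step (an open subscheme of a finite $k(\eta)$-scheme with the same length must be the whole scheme), and then the union argument finishes. You merely make explicit the Artinian structure theory that the paper leaves implicit.
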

\begin{proof}
  Any point in the generic fiber of $X\cap U\to{Y}$ lies in the generic fiber of $X\cap U_i\to Y$ for some $i$. But these all lie in the generic fiber of $X\cap U_0\to Y$, which quickly gives the desired equality.
\end{proof}

It now follows that there is a largest open subset $U\supseteq{\bigcap_{j=1}^{m}D(q_{j})}$ such that $X\cap U$ has generic root count $\ell_{\widehat{X}/{Y}}$.
\begin{definition}
  Let $U$ be the union of all open subsets $U_{i}\supseteq \bigcap_{j=1}^{m}D(q_{j})$ such that $\ell_{X\cap U_i,\eta}=\ell_{\widehat{X},\eta}$. We call this the \emph{(maximal) rectifiable locus} of the modification.
\end{definition}

\cref{exa:LocalizeGRC} showed that $U$ can be a strict subset of $T$. In \cref{pro:ExtendGRCOpenSubset}, we will give a criterion that allows us to extend ${\bigcap_{j=1}^{m}D(q_{j})}$ to a larger open set $U$ with $\ell_{X\cap U,\eta}=\ell_{\widehat{X},\eta}$. We first give some preliminary definitions.

\begin{definition}
  Let $Z\coloneqq \mathrm{Spec}(A[x_{i}^{\pm},w_{j}\mid i\in[n],j\in[m]])$. Any subset $J\subseteq [m]$ gives rise to a toric stratum $H_J = \bigcap_{j\notin J}D(w_{j})\cap \bigcap_{j\in J} V(w_{j})$ of $Z$.
  Note that the $H_{J}$'s are mutually disjoint and  isomorphic to standard tori over $Y$. Let $\widehat{I}_{\mathrm{lin},a}$ and $\widehat{I}_{\mathrm{nlin},a}$ be the ideals generated by the $\widehat{f}_{i}$'s and $\widehat{h}_{j}$'s in $A[x_{i}^{\pm},w_{j}\mid 1\leq{i}\leq{n},1\leq{j}\leq{m}]$. We write $\widehat{X}_{\mathrm{lin},a}=V(\widehat{I}_{\mathrm{lin},a})$ and $\widehat{X}_{\mathrm{nlin},a}=V(\widehat{I}_{\mathrm{nlin},a})$ for the corresponding subschemes in $Z$.
\end{definition}

\begin{proposition}\label{pro:ExtendGRCOpenSubset}
  Suppose that $X$ is tropically rectifiable. Let $J\subseteq [m]$ be a subset and suppose that
  \begin{equation}
    \codim(\widehat{X}_{\mathrm{lin},a,\eta}\cap H_{J',\eta},H_{J',\eta})+\codim(\widehat{X}_{\mathrm{nlin},a,\eta}\cap{H}_{J',\eta},H_{J',\eta})>\mathrm{dim}(H_{J',\eta})
  \end{equation}
  for every $J'$ with $J'\cap J=\emptyset$. %
  Then we have that $\ell_{\widehat{X},\eta}=\ell_{X\cap U_J,\eta}$ for $U_J=\bigcap_{j\in J} D(q_j)$.
\end{proposition}
\begin{proof}
  We first note that every $\widehat{X}_{\mathrm{lin},a}\cap{H_{J'}}$ is torus-equivariant.  We now use \cite[Theorem 3.6.10]{MS15}, \cref{pro:DimensionsTransverseIntersections} and \cref{thm:TropicalGenericFlatness2} to conclude that $(\widehat{X}_{\mathrm{lin},a}\cap{H}_{J'})\cap (\widehat{X}_{\mathrm{nlin},a}\cap{H}_{J'})$ is generically empty.
  Suppose that there is a point $x$ in $(X\cap U_J)_{\eta}$ with $q_{j}(x)=0$ for $j\notin J$. By definition, we have $q_{i}(x)\neq{0}$ for all $i\in{J}$.  %
  Let $J'=\{i\in[m]:q_{i}(x)=0\},$ which is a non-empty subset of $[m]$ with $J'\cap J=\emptyset$. %
  Then $x$ gives rise to a point of %
  $(\widehat{X}_{\mathrm{lin},a,\eta}\cap{H}_{J',\eta})\cap (\widehat{X}_{\mathrm{nlin},a,\eta}\cap{H}_{J',\eta})$. This contradicts the fact that $(\widehat{X}_{\mathrm{lin},a}\cap{H}_{J'})\cap (\widehat{X}_{\mathrm{nlin},a}\cap{H}_{J'})$ is generically empty.
\end{proof}

\begin{remark}
  Note that $\codim(\widehat{X}_{\mathrm{lin},a,\eta}\cap{H_{J,\eta}},H_{J,\eta})$ is relatively easy to compute.
  Computing $\codim(\widehat{X}_{\mathrm{nlin},a,\eta}\cap{H_{J,\eta}},H_{J,\eta})$ on the other hand generally involves calculating a Gr\"{o}bner basis, which can be unfeasible.
  In certain key cases we can still easily give a good lower bound for $\codim(\widehat{X}_{\mathrm{nlin},a,\eta}\cap{H_{\eta}},H_{J,\eta})$ however, so that we can again apply \cref{pro:ExtendGRCOpenSubset}.
\end{remark}

\begin{corollary}\label{cor:MonomialMCS}
  Suppose $X$ is tropically rectifiable and that $q_{j}$ is monomial for all~$j$. Then $\ell_{X,\eta}=\ell_{\widehat{X},\eta}$.
\end{corollary}
\begin{proof}
  In this case $\widehat{X}_{\nlin,a}\cap H_{J'}$ is empty for all $J'$, so that the statement follows from \cref{pro:ExtendGRCOpenSubset}.  %
\end{proof}

\begin{corollary}\label{cor:OpenSetExtendGRC}
  Suppose $X$ is tropically rectifiable. Let $J\subseteq [m]$ be a subset such that the $n\times{|J|}$-matrix $(p_{i,j})$ for $j\in{J}$ is of row rank $n$ over $K(A)$.  Then we have that $\ell_{\widehat{X},\eta}=\ell_{X\cap U_J,\eta}$ for $U_J=\bigcap_{j\in J}D(q_{j})$.
\end{corollary}
\begin{proof}
  Let $J'$ be a subset with $J\cap J'=\emptyset$. The condition implies that the codimension of the linear space $\widehat X_{\lin,a}\cap H_{J'}$ is $n$. %
  We now note that $\mathrm{dim}(H_{J'})=n+m-|J'|$, and that $\codim(\widehat{X}_{\mathrm{nlin},a}\cap{H}_{J'},H_{J'})>m-|J'|+1$. Indeed, the latter follows since the polynomials $\hat{h}_{j}=w_{j}-q_{j}$'s for $j\notin{J'}$ give a torus of codimension $m-|J'|$, and the remaining polynomials give a space of codimension at least $1$ by Krull's Hauptidealsatz.
\end{proof}

\begin{example}
  Consider the system $X=V(f_{1},f_{2})$ over $Y=\mathrm{Spec}(\mathbb{C}[a_{1},a_{2},a_{3},a_{4}])$ given by the polynomials
  \begin{align*}
    f_{1}=a_{1}(x-1)+a_{2}(y-2)\qquad \text{and}\qquad
    f_{2}=a_{3}(x-1)+a_{4}(y-4).
  \end{align*}
  The open subsets we obtain from \cref{cor:OpenSetExtendGRC} are $D(x-1)$ and $D(y-2)\cap{D(y-4)}$. Moreover, their union $U$ is not the entire torus.
  Using \cref{pro:ExtendGRCOpenSubset} with $J=\emptyset$, we however easily see that the root count is also valid over $T$. We thus see that the maximal rectifiable locus can be strictly larger than the ones obtained from \cref{cor:OpenSetExtendGRC}.
\end{example}

We conclude this section with a few comments on the linear scheme $\widehat{X}_{\mathrm{lin}}$. By combining \cref{thm:MainThm1} with \cref{lem:FlatnessMatroids}, we obtain the following explicit root count formula: %
\begin{lemma}\label{lem:PlueckerCriterion}
  Consider the non-zero Pl\"{u}cker vectors $p_{I}$ and $q_{I}$ of $\widehat{X}_{\mathrm{lin}}$ as elements of the parameter ring $A$, see \cref{def:PlueckerVectors}. For any $P\in{Y^{\mathrm{an}}}$ such that $p_{I}(P)q_{I}(P)\neq{0}$ for all $I\subseteq [n]$, we have that %
  \begin{equation*}
    \ell_{\widehat{X},\eta}=\trop(\widehat X_{\mathrm{lin},P})\cdot\trop(\widehat X_{\mathrm{nlin},P}).
  \end{equation*}
\end{lemma}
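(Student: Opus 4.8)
The plan is to deduce this from \cref{thm:GRCModification} (or equivalently \cref{cor:genericRootCountModification}) together with the generic tropical flatness theorem \cref{thm:TropicalGenericFlatness}, by checking that the non-vanishing of the Pl\"ucker coordinates $z_{i,j}(P)$ is exactly the condition that makes $P$ a point over which $\widehat X_{\lin}$ is tropically flat and over which the tropical intersection computation is valid. Since $X\to Y$ is assumed tropically modifiable (\cref{rem:TropicallyModifiable}), we already know $\widehat X_{\lin}$ is torus-equivariant and of generic codimension $n$, and that $\widehat X_{\lin}$ and $\widehat X_{\nlin}$ are algebraically independent and of complementary codimension; so the hypotheses of \cref{thm:TorusEquivariant} hold, and the conclusion $\ell_{\widehat X/Y}=\trop(\widehat X_{\lin,P})\cdot\trop(\widehat X_{\nlin,P})$ holds for $P$ in \emph{some} open dense subset of $Y^{\mathrm{an}}$. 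What remains is to identify this open dense subset concretely with the locus where the $z_{i,j}$ are nonzero.

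First I would recall that $\widehat X_{\lin}=V(\hat f_1,\dots,\hat f_n)$ is cut out by the linear forms $\hat f_i=\sum_j p_{i,j}w_j$ with $p_{i,j}\in A$; the Pl\"ucker coordinates $z_{i,j}$ of this linear space are, up to sign, the maximal minors of the coefficient matrix $(p_{i,j})$, hence elements of $A$. For a point $P\in Y^{\mathrm{an}}$ with $z_{i,j}(P)\neq 0$ for all nonzero $z_{i,j}$, the fiber $\widehat X_{\lin,P}$ is a linear subspace of $T_P$ with the same (nonzero) Pl\"ucker coordinates, so its matroid — and therefore its tropicalization $\trop(\widehat X_{\lin,P})$, which is the Bergman fan of that matroid — is independent of $P$ and agrees with the generic one. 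This is the key point: a linear space is tropically flat over exactly the locus where its nonzero Pl\"ucker coordinates stay nonzero, because the circuits of the matroid give a tropical basis (the linear forms indexed by circuits), and their coefficients are precisely products/combinations of the $z_{i,j}$, so non-degeneracy holds there. Meanwhile $\widehat X_{\nlin}=V(w_j-q_j)$ is a \emph{constant} scheme over $Y$, so it is tropically flat over every point and $\trop(\widehat X_{\nlin,P})$ does not depend on $P$ at all.

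Next I would invoke \cref{thm:MainThm1} (via \cref{cor:CombineGenericandMain}, or directly): over any $P$ with all $z_{i,j}(P)\neq 0$ at which the fiberwise tropical prevariety $\trop(\widehat X_{\lin,P})\cap\trop(\widehat X_{\nlin,P})$ is bounded, the generic root count $\ell_{\widehat X/Y}$ equals the tropical intersection number. By the torus-equivariance of $\widehat X_{\lin}$ and \cref{lem:GenericStableIntersection} / \cref{prop:TranslationProperCM}, one can always translate one factor so that the intersection is transverse and bounded; combined with the fact that $\trop(\widehat X_{\lin,P})$ and $\trop(\widehat X_{\nlin,P})$ are both independent of the choice of such $P$, the tropical intersection number $\trop(\widehat X_{\lin,P})\cdot\trop(\widehat X_{\nlin,P})$ is the same for every $P$ in the non-degenerate locus, and equals $\ell_{\widehat X/Y}$ as computed in \cref{cor:genericRootCountModification}. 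Finally, by \cref{thm:GRCModification}, $\ell_{\widehat X/Y}=\ell_{X_U/Y}$ for the maximal compatible open $U$, which yields the displayed identity $\ell_{\widehat X/Y}=\trop(\widehat X_{\lin,P})\cdot\trop(\widehat X_{\nlin,P})$.

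The main obstacle I anticipate is cleanly justifying that $P$ lies in the tropically flat locus of $\widehat X_{\lin}$ \emph{as soon as} the Pl\"ucker coordinates are nonzero at $P$ — i.e., producing the required local tropical basis from the circuit polynomials and checking their coefficients do not vanish near $P$. This is essentially the statement that tropical linear spaces are well-behaved on the non-degenerate locus; it follows from \cref{rem:HypersurfaceGlobalTropicalBasis} applied to each circuit form together with the fact that circuits form a tropical basis for a linear ideal, but it requires being careful that the coefficients of the circuit forms are polynomial combinations of the $z_{i,j}$ (after clearing denominators), so their non-vanishing at $P$ is implied by $z_{i,j}(P)\neq 0$. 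Once that identification is in place, the rest is an assembly of \cref{thm:MainThm1}, \cref{thm:GRCModification}, and the constancy of both tropicalizations on the non-degenerate locus.
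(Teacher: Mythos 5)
Your proposal is correct in spirit and arrives at the same conclusion by essentially the same route: establish $\ell_{\widehat{X}/Y}=\trop(\widehat X_{\lin,Q})\cdot\trop(\widehat X_{\nlin,Q})$ on a dense open subset of $Y^{\mathrm{an}}$ via \cref{thm:TorusEquivariant}, then argue that the intersection number is constant on the whole non-degenerate locus $\{z_{i,j}\neq 0\}$. The only substantive difference is the mechanism for the constancy step. The paper applies \cref{lem:ApproximationLemma} to each $z_{i,j}$ directly: this produces an open neighborhood $V$ of $P$ on which every $|z_{i,j}(\cdot)|$ is constant, hence the actual tropical linear space $\trop(\widehat X_{\lin,Q})$ (not just its matroid) is constant on $V$, and $V$ meets the dense locus. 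You instead invoke matroidal invariance of the intersection product: on the non-degenerate locus the underlying matroid of $\widehat X_{\lin,Q}$ is constant, and by the remarks preceding \cref{cor:GRCMatroidalDegree} the intersection number only depends on that matroid. Both arguments are valid; yours is slightly more global in flavor, while the paper's is a one-line local approximation.

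A few small corrections. The assertion that $\trop(\widehat X_{\lin,P})$ itself ``is the Bergman fan of that matroid --- is independent of $P$'' is not right: over a non-trivially valued field, $\trop(\widehat X_{\lin,P})$ is the tropical linear space of the \emph{valuated} matroid determined by $\big(\mathrm{val}\,z_{i,j}(P)\big)_{i,j}$, which does vary with $P$. What is constant on the non-degenerate locus is the \emph{underlying} matroid, and that is exactly what suffices for the intersection number. Second, the discussion of tropical flatness and local tropical bases via circuit forms is a detour: the paper's proof of this lemma never invokes tropical flatness, since \cref{thm:TorusEquivariant} already supplies the dense-subset equality and the remaining step is pure approximation. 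Third, the final appeal to \cref{thm:GRCModification} to pass to $\ell_{X_U/Y}$ is not needed here --- the lemma is a statement about $\ell_{\widehat X/Y}$, and what you have already established closes the loop.
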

\begin{proof}
  By \cref{lem:FlatnessMatroids}, $\hat{X}_{\lin,P}$ is tropically flat over $P$. Moreover, $\hat{X}_{nlin,P}$ is automatically tropically flat over $P$. We can then find a valued field extension $K\to L$, a point $P_{L}$ lying over $P$ and an open neighborhood $U_{L}$ of $P_{L}$ such that the tropicalizations of these schemes are constant and equal to the tropicalization over $P$ by \cref{cor:TropicallyFlatImpliesConstant}. But the open subset $U_{L}$ intersects the dense open subset
  \cref{thm:TorusEquivariant}, which gives the statement of the lemma.
\end{proof}

\begin{remark}
  \label{rem:matroidalDegree}
  Let $\Sigma$ be a fixed pure balanced polyhedral complex.  Let $\Sigma_\lin$ be a tropical linear space of complementary dimension, and let $\text{rec}(\Sigma_\lin)$ denote its recession fan, which is the Bergman fan of the underlying matroid of $\Sigma_\lin$ by \cite[Theorem 4.4.5]{MS15}.  As $\Sigma\cdot\Sigma_\lin=\Sigma\cdot\mathrm{rec}(\Sigma_\lin)$ by \cref{lem:tropicalIntersectionNumberRecessionFan}, we have $\Sigma\cdot\Sigma_\lin = \Sigma\cdot\Sigma_\lin'$ whenever the two tropical linear spaces $\Sigma_\lin, \Sigma_\lin'$ share the same underlying matroid.

  Consequently, the tropical intersection number $\trop(\widehat X_{\lin,P})\cdot\trop(\widehat X_{\nlin,P})$ in \cref{thm:GRCModification} can be regarded as a \emph{matroidal degree}.  Whereas the classically the degree of $\widehat X_{\nlin,P}$ equals the number of intersection points of $\widehat X_{\nlin,P}$ with a generic linear space of complementary dimension, the tropical intersection number equals the number of intersection points of $\widehat X_{\nlin,P}$ with a generic linear space with a fixed matroid.

  Hence, whether the generic root count of \cref{thm:GRCModification} decreases upon further specialization as discussed in \cref{sec:degeneracyGraph} depends primarily on whether or not it changes the matroid of $\widehat X_{\lin,P}$ for $P$ generic.
\end{remark}

\section{Applications to systems with linear dependencies}\label{sec:linearDependencies}
In this section, we explore square systems with linear dependencies between the coefficients of their polynomials. We will %
focus on two special types of dependencies which we call  vertical and horizontal dependencies. The first is inspired by the steady-state equations of chemical reaction networks \cite{Dickenstein16}, and the second is inspired by equations with fixed polynomial supports \cite{KavehKhovanskii2012}. %

\begin{assumption}\label{ass:SquareSystemsParameterSpace}
  Throughout this section, the parameter space $Y$ will be the $m$-dimensional affine space $\mathbb{A}^{m}=\Spec(K[a_1,\dots,a_m])$  %
  and the coefficients of the polynomials  $f_1,\dots,f_n\in K[a_1,\dots,a_m][x_1^\pm,\dots,x_n^\pm]$ will be linear and homogeneous in  $a_1,\dots,a_m$.
\end{assumption}

\subsection{Square systems with vertical parameter dependencies}\label{sec:verticalDependencies}
In this section, we consider a class of parametrized polynomial systems inspired by the steady-state equations of chemical reaction networks \cite{Dickenstein16}.

\begin{definition}\label{def:verticalDependencies}
  Let $x^{\alpha_1},\dots,x^{\alpha_m}$ be the monomials of $f_1,\dots,f_n$.
  We say $f_1,\dots,f_n$ have \emph{vertical parameter dependencies}, if there is a decomposition $A=A_{1}\otimes_K\dots\otimes_KA_m$ such that each $f_i$ is of the form
  \[ f_i=\sum_{j=1}^m p_{i,j}\cdot x^{\alpha_j} \qquad \text{with } p_{i,j}\in A_j. \]
  In other words, the coefficient matrix $(p_{i,j})_{i\in[n],j\in[m]}\in A^{n\times m}$ has algebraic dependencies along its columns, but not rows.
\end{definition}

\begin{definition}\label{def:verticalDependenciesModification}
  Let $f_1,\dots,f_n$ have vertical parameter dependencies. Choosing $q_j\coloneqq x^{\alpha_j}$ in \cref{def:UniversalModification} we obtain in $\widehat C \coloneqq A[x_i^\pm, w_j^\pm \mid i\in[n], j\in[m]]$:
  \begin{equation*}
    \hat{f}_{i}\coloneqq\sum_{j=1}^m p_{i,j}w_j \text{ for } i\in[n] \qquad \text{and}\qquad
    \hat{h}_{j}\coloneqq w_{j}-x^{\alpha_j} \text{ for } j\in[m].
  \end{equation*}
  Let $\widehat X_\lin \coloneqq V(\langle \hat f_1,\dots, \hat f_n\rangle)$ and $\widehat X_\nlin\coloneqq V(\langle \hat h_1,\dots, \hat h_m\rangle)$.  By \cref{ass:SquareSystemsParameterSpace} and \cref{lem:TorusEquivariantModifications}, $\widehat X_\lin$ is torus-equivariant. Moreover, $\trop(\widehat X_{\nlin,P})$ is a linear subspace in $\RR^n$ independent of $P$ since the $q_{j}$'s are independent of the parameters, by construction. We will refer to $\widehat X\coloneqq \widehat X_\lin\cap\widehat X_{\nlin}$ as the \emph{modification} for vertical dependencies.
\end{definition}

Before we turn to generic root counts, we would like to highlight a particular class of nice tropical varieties:
\begin{definition}
  \label{def:tropicalCompleteIntersection}
  Let $\Sigma_0$ be a balanced polyhedral complex in $\RR^n$.  We say $\Sigma_0$ is a \emph{tropical complete intersection}, if there are tropical hypersurfaces $\Sigma_1,\dots,\Sigma_k$ with
  \begin{equation*}
    \Sigma_k=\Sigma_1\cap_\st\dots\cap_\st\Sigma_k.
  \end{equation*}
\end{definition}

For vertical dependencies, $\trop(\widehat X_{\nlin,P})$ is a tropical complete intersection, which is easier to work with than an arbitrary tropical variety:

\begin{proposition}\label{prop:verticalDependenciesRootCount}
  Let $f_1,\dots,f_n$ have vertical parameter dependencies. Then, for generic $P\in Y^\an$ as described in \cref{lem:PlueckerCriterion} we have
  \begin{equation*}
      \ell_{X,\eta}=\trop(\widehat X_{\lin,P})\cdot \prod_{j=1}^m \trop(V(\hat{h}_{j})_{P}).
  \end{equation*}
\end{proposition}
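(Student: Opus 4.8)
The plan is to chain the results of \cref{sec:modifications} together with an elementary verification that $\widehat X_{\nlin}$ is, fiberwise, a tropical complete intersection. First I would record that the modification for vertical dependencies of \cref{def:verticalDependenciesModification} uses $q_j=x^{\alpha_j}$, which is monomial, and that every (nonzero) $f_i$ has some $p_{i,j}\neq 0$; hence \cref{cor:MonomialMCS} applies and gives $\ell_{X/Y}=\ell_{\widehat X/Y}$. Next, \cref{lem:TorusEquivariantModifications} shows that $\widehat X_{\lin}$ is torus-equivariant, and (assuming, as we may, that $X\to Y$ is generically finite, since otherwise $\ell_{X/Y}=\infty$ and $\widehat X_{\lin}$ fails to have generic codimension $n$) \cref{lem:GenFiniteImpliesGenCI} shows it is of generic codimension $n$, so $X\to Y$ is tropically modifiable in the sense of \cref{rem:TropicallyModifiable}. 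Then \cref{lem:PlueckerCriterion}, applied to a point $P$ with $z_{i,j}(P)\neq 0$ for all Plücker coordinates $z_{i,j}$ of $\widehat X_{\lin}$, yields $\ell_{\widehat X/Y}=\mathrm{trop}(\widehat X_{\lin,P})\cdot\mathrm{trop}(\widehat X_{\nlin,P})$.

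The heart of the argument is the identity $\mathrm{trop}(\widehat X_{\nlin,P})=\mathrm{trop}(V(\hat h_{1})_{P})\cap_{\st}\cdots\cap_{\st}\mathrm{trop}(V(\hat h_{m})_{P})$, i.e.\ that $\widehat X_{\nlin}$ is a tropical complete intersection over $P$. To prove this I would apply the invertible monomial change of coordinates $\psi_j\coloneqq w_j x^{-\alpha_j}$ on the relative torus $\widehat T_{Y}$, under which $\hat h_{j}=w_j-x^{\alpha_j}$ becomes $x^{\alpha_j}(\psi_j-1)$, so that $V(\hat h_{j})$ is carried to the coordinate hypersurface $V(\psi_j-1)$. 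The $m$ hypersurfaces $V(\psi_j-1)$ form a complete intersection whose tropicalizations $\{u:u_{\psi_j}=0\}$ are coordinate hyperplanes in $\mathbb{R}^{n+m}$ meeting transversally with weight one; hence their stable intersection is the coordinate subspace $\{u:u_{\psi_j}=0 \text{ for all } j\}$ with weight one, which is exactly $\mathrm{trop}(V(\psi_1-1,\dots,\psi_m-1)_P)$. Since an invertible monomial map on the torus acts by an invertible linear map on $\mathbb{R}^{n+m}$, commutes with tropicalization, and preserves stable intersections and weights, pulling back along $\psi$ gives the claimed identity. (This also recovers the remark in \cref{def:verticalDependenciesModification} that $\mathrm{trop}(V(\hat h_{j})_{P})$ is independent of $P$, as the $\hat h_{j}$ have constant coefficients.)

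Finally I would assemble the pieces using associativity and commutativity of the tropical intersection product (e.g.\ \cite[Section 3.6]{MS15}):
\begin{align*}
\ell_{X/Y}=\ell_{\widehat X/Y} &= \mathrm{trop}(\widehat X_{\lin,P})\cdot\mathrm{trop}(\widehat X_{\nlin,P})\\
&= \mathrm{trop}(\widehat X_{\lin,P})\cdot\Big(\mathrm{trop}(V(\hat h_{1})_{P})\cap_{\st}\cdots\cap_{\st}\mathrm{trop}(V(\hat h_{m})_{P})\Big)\\
&= \mathrm{trop}(\widehat X_{\lin,P})\cdot\prod_{j=1}^m \mathrm{trop}(V(\hat h_{j})_{P}).
\end{align*}
The proposition is thus mostly a matter of correctly invoking already-established results; the only genuine work is the transversality and weight-one bookkeeping in the middle paragraph, and once the problem is transported to the coordinate hyperplanes $V(\psi_j-1)$ this is immediate.
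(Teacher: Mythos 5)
Your proof is correct and tracks the paper's intended argument: the paper's one-line proof invokes \cref{cor:MonomialMCS} to get $\ell_{X/Y}=\ell_{\widehat X/Y}$ and relies on \cref{lem:PlueckerCriterion} together with the tropical–complete–intersection property of $\widehat X_{\nlin}$ asserted in the preceding paragraph, which is exactly the chain you reproduce. Your monomial change of coordinates $\psi_j=w_jx^{-\alpha_j}$ is a clean way to make explicit the complete-intersection claim that the paper leaves as an unproved remark, but it is a detail-filling step rather than a genuinely different route.
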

\begin{proof}
  Follows from \cref{thm:GRCModification} and \cref{cor:MonomialMCS}.
\end{proof}

\cref{prop:verticalDependenciesRootCount} allows us to compute the generic root count via mixed volumes, eliminating the need to compute any set-theoretic intersections of tropical varieties:

\begin{remark}\label{rem:verticalDependenciesRootCount}
  As explained in the first paragraph of \cref{rem:matroidalDegree}, the intersection product with $\trop(\widehat X_{\lin,P})$ equals the intersection product with $\trop(M)$, where $\trop(M)$ denotes the Bergman fan of the underlying matroid of $\widehat X_{\lin,P})$.  For some matroids $M$, we further have $\trop(M)=\trop(V(\hat \ell_1))\cap_\st\dots\cap_\st\trop(V(\hat \ell_n))$ for linear polynomials $\hat \ell_i$, making $\ell_{X,\eta}=\MV(\hat \ell_1,\dots, \hat \ell_n,\hat h_1,\dots, \hat h_m)$ by \cite[Theorem 4.6.9]{MS15}.  These matroids are dual to the so-called \emph{transversal matroids} \cite[Proposition~3.4 and Corollary~5.6]{FinkRinconStiefel2015}. If $M$ is not dual to a transversal matroid, then it can be expressed as a signed linear combination of cotransversal matroids, see \cite{Hampe2017}.   Consequently, the generic root count coincides with a signed linear combination of mixed volumes.
\end{remark}

We conclude this section with an example that showcases how our techniques can be applied to steady-state equations of reaction networks.

\begin{example}\label{ex:CRN}
  Consider the following reaction network:
  \[\small X_{i-1}+E \uset{d_{1,i}}{\oset{a_{1,i}}{\xrightleftharpoons{\hspace{7mm}}}} Y_{1,i} \oset{k_{1,i}}{\xrightarrow{\hspace{7mm}}} X_{i}+E \quad X_{i}+F \uset{d_{2,i}}{\oset{a_{2,i}}{\xrightleftharpoons{\hspace{7mm}}}} Y_{2,i} \oset{k_{2,i}}{\xrightarrow{\hspace{7mm}}} X_{i-1}+F \quad\text{for } i\in[n]. \]
  It describes the standard model of $n$-sites phosphorylation, see for example \cite[Section 2]{FeliuRendallWiuf2020}. Under the laws of mass-action kinematics, the corresponding evolution equations are given by
  \allowdisplaybreaks
  \begin{align*}
    \dot{x}_{i}&=f_i\coloneqq -a_{1,i+1}x_i x_{ E} -a_{2,i}x_ix_{ F}+d_{1,i+1}y_{1,i+1}+d_{2,i}y_{2,i}  +k_{1,i}y_{1,i}+k_{2,i+1}y_{2,i+1}\\ &\hspace{90mm}\text{for }i\in \{0\}\cup [n], \\
    \dot{y}_{1,i}&=f_{1,i}\coloneqq a_{1,i}x_{i-1} x_{ E}-(d_{1,i}+k_{1,i})y_{1,i} \hspace{24mm}\text{for }i\in [n],\\
    \dot{y}_{2,i}&=f_{2,i}\coloneqq a_{2,i}x_i x_{ F}-(d_{2,i}+k_{2,i})y_{2,i} \hspace{28mm}\text{for } i\in [n], \\
    \dot{x}_{E}&=f_E\coloneqq \sum_{i=1}^{n} -a_{1,i}x_{i-1} x_{ E} + (d_{1,i}+k_{1,i})y_{1,i}, \\
    \dot{x}_{F}&=f_F\coloneqq \sum_{i=1}^n -a_{2,i}x_i x_{ F} + (d_{2,i}+k_{2,i})y_{2,i},
  \end{align*}
  where $a_{2,0}=d_{2,0}=k_{1,0}=0$ and $a_{1,n+1}=d_{1,n+1}=k_{2,n+1}=0$, the remaining $a_{j,i}$'s, $d_{j,i}$'s, $k_{j,i}$'s are parameters, and the $x$'s and the $y$'s are variables.
  The solutions of the evolution equations describe a three-dimensional set of steady states. In order to obtain finitely many solutions, observe that the following quantities are conserved:
  \begin{align*}
    E_{tot}&=g_E\coloneqq x_{E}+\sum_{i=1}^{n} y_{1,i}-c_E, \qquad F_{tot}=g_F\coloneqq x_{F}+\sum_{i=1}^n y_{2,i}-c_F,\\
    X_{tot}&=g_X\coloneqq \sum_{i=0}^n x_i+\sum_{i=1}^{n} (y_{1,i}+ y_{2,i})-c_F
  \end{align*}
  To make the system square, we can omit a suitable subset of the evolution equations depending on the conservation equations. In the system above, we may omit $f_0,f_E,f_F$ and consider the system consisting of $f_i,f_{1,i},f_{2,i}$ for $i=1,\dots,n$ and $g_E,g_F,g_X$.  In other words, we have $Y=\Spec(A)$ and $X=\Spec(A[x^\pm,y^\pm]/I)$ where
  \begin{align*}
    A&\coloneqq \CC\big[a_{j,i},d_{j,i},k_{j,i}\mid j\in[2], i\in [n]\big],\\
    A[x^\pm,y^\pm]&\coloneqq A\big[x_0,x_i,x_E,x_F,y_{j,i}\mid j\in[2], i\in [n]\big], \quad \text{and }\\
    I&\coloneqq \langle f_i,f_{1,i},f_{2,i},g_E,g_F,g_X\mid i\in [n]\rangle.
  \end{align*}

  If the conservation equations $g_{E}$, $g_{F}$ and $g_{X}$ were generic, in the sense that each of its monomials comes with a unique parameter as in the evolution equations, then the modifications in \cref{def:verticalDependenciesModification} would be immediately applicable, so that the generic root count is given by the tropical intersection number in \cref{prop:verticalDependenciesRootCount}.

  To address non-generic conservation equations, we can adjust the construction \cref{def:verticalDependenciesModification} as follows:

  Let the modification on the $f$'s be exactly as in \cref{def:verticalDependenciesModification}, namely for $i=1,\dots,n$ and using $z$ to denote the newly introduced variables:
  \begin{equation*}
    \begin{array}{c}
      \hat f_i\coloneqq -a_{1,i+1}z_{iE} -a_{2,i}z_{iF}+d_{1,i+1}y_{1,i+1}+d_{2,i}y_{2,i}  +k_{1,i}y_{1,i}+k_{2,i+1}y_{2,i+1}, \\[2mm]
      \hat f_{1,i}\coloneqq a_{1,i}z_{i-1,E}-(d_{1,i}+k_{1,i})y_{1,i}, \hspace{11mm}
      \hat f_{2,i}\coloneqq a_{2,i}z_{i,F}-(d_{2,i}+k_{2,i})y_{2,i},\\[2mm]
      \hat h_{0E}\coloneqq z_{0E} - x_0x_E, \hspace{13mm}
      \hat h_{iE}\coloneqq z_{iE} - x_ix_E, \hspace{13mm}
      \hat h_{iF}\coloneqq z_{iF} - x_ix_F\\[2mm]
      \in A[x^\pm,y^\pm,z^\pm]\coloneqq A\big[x_0,x_i,x_E,x_F,y_{j,i},z_{0E},z_{iE},z_{iF}\mid j\in[2], i\in [n]\big]
    \end{array}
  \end{equation*}
  and keep $\hat g_E\coloneqq g_E$, $\hat g_F\coloneqq g_F$, $\hat g_X\coloneqq g_X \in A[x^\pm,y^\pm,z^\pm]$. Set
  \begin{align*}
    \widehat{X}_\lin\coloneqq V(\hat f_i,\hat f_{1,i},\hat f_{2,i}\mid i\in[n]), \quad
    \widehat{X}_{\text{con}}\coloneqq V(\hat g_E,\hat g_F,\hat g_X),\quad
    \widehat{X}_\nlin\coloneqq V(\hat h_1,\dots,\hat h_m).
  \end{align*}
  As $\trop(\widehat{X}_{\nlin,P})$ and $\trop(\widehat{X}_{\text{con},P})$ intersect transversally for generic~$P$, and $\widehat X_\lin$ is equivariant and parametrically independent to both $\widehat{X}_{\nlin}$ and $\widehat{X}_{\text{con}}$, we obtain for generic $P$
  \begin{equation*}
    \label{eq:crn}
    \begin{array}{rcl}
      \ell_{X,\eta}&\overset{\text{Prop. \ref{thm:TorusEquivariant}}}{=}& \trop(\widehat{X}_{\mathrm{lin},P})\cdot \trop(\widehat{X}_{\mathrm{nlin},P}\cap \widehat{X}_{\mathrm{con},P})\\[2mm]
                   &=& \trop(\widehat{X}_{\mathrm{lin},P})\cdot \trop(\widehat{X}_{\mathrm{nlin},P})\cdot \trop(\widehat{X}_{\mathrm{con},P}).
    \end{array}
  \end{equation*}
  We thus see that the generic root count is expressible as a tropical intersection product between a tropical linear space $\trop (\widehat{X}_{\mathrm{lin},P})\cap \trop (\widehat{X}_{\mathrm{con},P})$ and a linear space $\trop (\widehat{X}_{\mathrm{nlin},P})$. The above technique works for arbitrary chemical reaction networks, provided that the generic codimension of $\widehat{X}_{\mathrm{lin}}$ is as expected as in \cref{lem:GenFiniteImpliesGenCI}.

  Note however that obtaining the actual tropical intersection numbers, such as $\trop(\widehat{X}_{\mathrm{lin},P})\cdot \trop(\widehat{X}_{\mathrm{nlin},P})\cdot \trop(\widehat{X}_{\mathrm{con},P})=3$ for $n=2$ and $\trop(\widehat{X}_{\mathrm{lin},P})\cdot \trop(\widehat{X}_{\mathrm{nlin},P})\cdot \trop(\widehat{X}_{\mathrm{con},P})=5$ for $n=3$, remains a challenging computational task in its own right.  Expressing the generic root count as a tropical intersection numbers thus does not outright solve the difficult task of computing the generic root count, rather it gives us a new combinatorial approach for tackling it %
  \cite{HelminckHenrikssonRen2024}.
\end{example}

\subsection{Square systems with horizontal parameter dependencies}\label{sec:horizontalDependencies}
In this section, we consider a class of parametrized square systems inspired by the work of Kaveh and Khovanskii~\cite{KavehKhovanskii2012}.

\begin{definition}\label{def:horizontalDependencies}
  Let $x^{\alpha_1},\dots,x^{\alpha_s}$ be the monomials of $f_1,\dots,f_n$.
  We say $f_1,\dots,f_n$ have \emph{horizontal parameter dependencies}, if there is a decomposition of the parameter ring $A=\bigotimes_{i=1}^{n} A_{i}$, such that each $f_i$ is of the form
  \[ f_i=\sum_{j=1}^s p_{i,j}\cdot x^{\alpha_j} \qquad \text{with } p_{i,j}\in A_{i}. \]
  In other words, the coefficient matrix $(p_{i,j})_{i\in[n],j\in[m]}\in A^{n\times s}$ has algebraic dependencies along its rows, but not columns. We will assume that the individual $A_{i}$ are again free polynomials rings over $K$ with a fixed choice of linear generators. %
  By expressing the $p_{i,j}$'s in terms of these generators and grouping together monomials in the $f_{i}$'s, we can then write %
\begin{equation}\label{eq:HorizontalDependenciesEquation} f_i=\sum_{j=1}^m a_{i,j}\cdot q_{j} \qquad \text{with } a_{i,j}\in A_{i}.
  \end{equation}
  Note that the non-zero $a_{i,j}$'s are algebraically independent by construction.
\end{definition}

\begin{definition}\label{def:horizontalDependenciesModification}
  Let $f_1,\dots,f_n$ have horizontal parameter dependencies. Choosing the $q_j$'s in \cref{def:UniversalModification} as in \cref{eq:HorizontalDependenciesEquation}, we obtain in $\widehat C \coloneqq A[x_i^\pm, w_{j}^\pm \mid i\in [n], j\in[m]]$:
  \begin{equation*}
    \hat{f}_{i}\coloneqq\sum_{j=1}^m a_{i,j}w_{j} \quad \text{for } i\in[n]\qquad\text{and}\qquad \hat{h}_{j}\coloneqq w_{j}-q_{j} \quad \text{for } j\in[m].
  \end{equation*}
  By \cref{ass:SquareSystemsParameterSpace} and \cref{lem:TorusEquivariantModifications}, $\widehat X_\lin\coloneqq V(\langle \hat f_1,\dots, \hat f_n\rangle)$ is torus-equivariant. Moreover, $\widehat X_{\nlin,P}\coloneqq V(\langle \hat h_1,\dots, \hat h_m\rangle)$ is independent of $P$. We will refer to $\widehat X\coloneqq \widehat X_\lin\cap\widehat X_{\nlin}$ as the \emph{modification} for horizontal dependencies.
\end{definition}

Recall that for vertical dependencies, $\trop(\widehat X_\lin)$ is a general tropical linear space while  $\trop(\widehat X_\nlin)$ is a tropical complete intersection. For horizontal dependencies, the situation is reversed and $\trop(\widehat X_\lin)$ is a tropical complete intersection while $\trop(\widehat X_\nlin)$ can be a more general tropical variety. That is, the $\hat{h}_{j}$'s need not give a tropical basis, and the stable intersection of the $\trop(V(\hat{f}_{i}))$'s might give different tropical intersection numbers.  %

\begin{proposition}\label{prop:horizontalDependenciesRootCount}
  Let $f_1,\dots,f_n$ have horizontal parameter dependencies and let $U$ be the rectifiable locus of the modification. Then, for $P\in Y^\an$ such that %
  $a_{i,j}(P)\neq 0$ unless $a_{i,j}=0$, we have
  \begin{equation*}
    \ell_{X\cap U,\eta} = \trop(\widehat X_{\nlin,P})\cdot \prod_{i=1}^n \trop(V(\widehat{f}_{i})_{P}).
  \end{equation*}
\end{proposition}
\begin{proof}
    Note that these $P$ lie in the tropically flat locus by \cref{cor:TropConstant}. Moreover, the $\trop(V(\widehat{f}_{i}))$'s are torus-equivariant and independent by construction, so that the result follows from \cref{thm:TorusEquivariant}.
\end{proof}

\begin{remark}[Comparison to the works of Kaveh and Khovanskii]
  Let $\mathcal X$ be an $n$-dimensional irreducible variety over the complex numbers, and let $L_{1},\dots,L_n\subseteq\mathbb{C}(\mathcal X)$ be linear subspaces of the function field.
  In \cite{KavehKhovanskii2010}, Kaveh and Khovanskii define the \emph{birational intersection index} $[L_{1},\dots ,L_{n}]$, which records the generic number of solutions of $h_{1}=\dots=h_{n}=0$ in $\mathcal X$ for $h_{i}\in L_i$ generic.
  In \cite{KavehKhovanskii2012}, a suitable higher-rank valuation $\mathbb{C}(X)\backslash\{0\}\to\mathbb{Z}^{n}$ is used to  attach a convex body $\Delta_{L}$, the \emph{Newton-Okounkov body}, to any subspace $L$. In \cite[Theorem 4.9]{KavehKhovanskii2012} it is then proved that
  \begin{equation*}
    [L,\dots ,L]=\dfrac{n! \cdot  \mathrm{deg}(\Phi_{L})}{\mathrm{ind}(A_{L})}\cdot \mathrm{vol}(\Delta_{L}),
  \end{equation*}
  where $\Phi_{L}$ is the Kodaira map and $\mathrm{ind}(A_{L})$ is the index of a certain subgroup. %
  If the Kodaira map is a birational, then both $\mathrm{deg}(\Phi_{L})$ and $\mathrm{ind}(A_{L})$ are equal to $1$, so that the formula reduces to $[L,\dots ,L]=n! \cdot \mathrm{vol}(\Delta_{L})$.

  In comparison, our paper always assumes that the ambient variety $\mathcal X$ is a torus, though it may be over any field $K$. Let $q_{i,1},\dots,q_{i,k_i}$ be a basis of $L_{i}$ and consider
  \begin{equation*}
    f_{i}=\sum_{j=1}^{k_i} a_{i,j}q_{i,j} \qquad \text{for } i=1,\dots,n.
  \end{equation*}
   These give a square system with horizontal parameter dependencies. The open subset used in \cite[Definition 4.5 (1)]{KavehKhovanskii2012} is a subset of our rectifiable locus. Namely, if we write $Z_{i}=\bigcap_{j=1}^m V(q_{i,j})$, then $Z_{\mathbf{L}}$ from \cite[Section 4.2]{KavehKhovanskii2012} is $\bigcup_{i=1}^{n} Z_{i}$. Its complement is then easily seen to be a union of open subsets $U$ as in \cref{cor:OpenSetExtendGRC}.

  In \cite[Definition 4.5 (2)]{KavehKhovanskii2012}, it is furthermore required that the solutions are non-degenerate in the sense that the generic fiber of the morphism $X\to{Y}$ is \'{e}tale. This will generally not be the case if the characteristic of the base field $K$ is positive.
  For example, the parametrized polynomial $f=a_{1}+a_{2}x^{p}$ over the ring $A=\mathbb{F}_{p}[a_{1},a_{2}]$ has generic root count $p$, but the number of generic solutions where the morphism is \'{e}tale is zero. If $K=\mathbb{C}$, then the proof of \cite[Proposition 5.7]{KavehKhovanskii2010} implies that $X\cap U\to{Y}$ is generically \'{e}tale for the rectifiable locus $U$, from which we obtain
  \[ [L_1,\dots,L_n] = \trop(\widehat X_{\nlin,P})\cdot \prod_{i=1}^n \trop(V(\widehat{f}_{i})_{P}) = \ell_{X\cap U,\eta} \]
  using \cref{prop:horizontalDependenciesRootCount} and our earlier observation on the open subset used in the definition of $[L_{1},\dots ,L_{n}]$.
\end{remark}

We end this section with two examples from the literature, in which we highlight two different ideas to simplify the tropical intersection product in \cref{prop:horizontalDependenciesRootCount}:
\begin{enumerate}
    \item In \cref{ex:Kuramoto}, $\widehat X_\nlin$ is quasi-linear in the sense that it is the preimage of a linear space under a finite toric morphism.
    \item In \cref{exa:DuffingOscillators}, $\widehat X_\nlin$ is a tropical complete intersection and we showcase how one can simplify the resulting mixed volume.
\end{enumerate}

\begin{example}[Kuramoto model]\label{ex:Kuramoto}
Consider the following polynomials from \cite[Equation F3]{Kuramoto2019}, which describe the stationary equations of the Kuramoto model for a simple graph $G$ with vertex set $[N]$ and edge set $E(G)$:
\begin{equation}\label{eq:Kuramoto}
  f_{i}\coloneqq \sum_{\{ij\}\in E(G)} a_{i,j}(x_{i}x_{j}^{-1}-x_{j}x_{i}^{-1})-b_{i}\quad \text{for } i\in 1,\dots,N-1 \text{ and } x_N\coloneqq 1.
\end{equation}
Note that the parameters were renamed to $a_{i,j}$ and $b_i$, and some constants were omitted that do not change the generic root count.  The modification in \cref{def:horizontalDependenciesModification} yields in $\widehat C \coloneqq A[x_i^\pm, w_{i,j}^\pm \mid i\in [N-1], \{ij\}\in E(G)]$:
\allowdisplaybreaks
  \begin{align*}
    \hat{f}_{i}&\coloneqq\sum_{\{ij\}\in E(G)} a_{i,j}w_{i,j}-b_i &&\text{for } i\in [N-1],\\
    \hat{h}_{ij}&\coloneqq w_{i,j}-(x_{i}x_{j}^{-1}-x_{j}x_{i}^{-1}) &&\text{for }\{ij\}\in E(G).
  \end{align*}
  By \cref{prop:horizontalDependenciesRootCount}, the generic root count equals $\trop(\widehat X_{\nlin,P})\cdot \prod_{i=1}^{N-1} \trop(V(\widehat{f}_{i})_{P})$.  We will now describe the tropical intersection product in terms of the graph $G$, using $\trop(\Gamma)$ to denote the Bergman fan of a graphic matroid of a graph $\Gamma$.  For more information on Bergman fans of graphic matroids, see \cite[Example 4.2.14]{MS15}.

  First note that $\trop(V(\widehat{f}_{i})_{P}) = \trop(\Star(G,i))$, where $\Star(G,i)$ is the subgraph of $G$ consisting vertex $i$ as well as all vertices and edges adjacent to it.

  Moreover, let $\hat{T}\coloneqq \Spec(\hat{C})$, and consider the automorphism $\kappa_{1}\colon \hat T\rightarrow\hat T$ and the Kummer map $\kappa_2\colon\hat T\rightarrow\hat T$ that are defined by the following maps on the level of coordinate rings:
  \begin{align*}
      \kappa^{*}_{1}\colon& \hat C\rightarrow\hat C,\qquad a_{i,j}\mapsto a_{i,j}, \quad x_{i}\mapsto x_{i}, \quad w_{i,j}\mapsto  x_{i}x_{j}w_{i,j},\\
      \kappa^{*}_{2}\colon& \hat C\rightarrow\hat C,\qquad a_{i,j}\mapsto a_{i,j}, \quad x_{i}\mapsto x_{i}^2, \quad w_{i,j}\mapsto  w_{i,j}.
  \end{align*}
  Their composition $\kappa=\kappa_{2}\circ \kappa_{1}$ is a finite map of tori of degree $2^{N-1}$ and $\hat X_{\mathrm{nlin}}$ is the inverse image of the linear space $\bigcap_{\{ij\}\in E(G)} V(w_{i,j}-(x_{i}-x_{j}))$ under $\kappa$.  One can show that $\trop(\bigcap_{\{ij\}\in E(G)} V(w_{i,j}-(x_{i}-x_{j})))$ is the Bergman fan $\trop(\hat G)$, where $\hat G$ is the cone graph over $G$.  As $\kappa$ is monomial, we then have $\trop(\widehat X_{\nlin,P})=\kappa^{\trop}(\trop(\hat G))$, where $\kappa^{\trop}\colon \RR^{|w|+|x|}\rightarrow\RR^{|w|+|x|}$ is the map which scales all $x$-coordinates by $2$.

  The rectifiable locus is the entirety of $T$, so that we obtain the formula
  \begin{equation*}
    \ell_{X,\eta} = \kappa^{\trop}(\trop(\hat G))\cdot \prod_{i\in[N-1]} \trop(\Star(G,i)).
  \end{equation*}
  As in \cref{ex:CRN}, obtaining the actual intersection number, such as $\kappa^{\trop}(\trop(\hat G))\cdot \prod_{i\in[N-1]} \trop(\Star(G,i))=6$ for $G$ the complete graph on $N=3$ vertices, is a challenging task in its own right.  However, the formula allows for a new combinatorial approach for computing the generic root count.
\end{example}

\begin{example}\label{exa:DuffingOscillators}
  Fix $N>0$, and consider the following polynomials for $i\in[N]$:
  \begin{align*}
    f_{i}&=a_{1,i}u_{i}(u_{i}^{2}+v_{i}^{2})+a_{2,i}u_{i}+a_{3,i}v_{i}+a_{4,i}+\sum_{j\neq{i}} c_{j,i}v_{j},\\
    g_{i}&=b_{1,i}v_{i}(u_{i}^{2}+v_{i}^{2})+b_{2,i}u_{i}+b_{3,i}v_{i}+b_{4,i}+\sum_{j\neq{i}} d_{j,i}u_{j}.
  \end{align*}
  Here, the $a_{j,i}$'s, $b_{j,i}$'s, $c_{j,i}$'s, $d_{j,i}$'s are the parameters and the $u_{i}$'s, $v_{i}$'s are the variables. Note that there are $2N$ equations in $2N$ variables.   This polynomial system describes the steady states of coupled Duffing oscillators. In \cite{BMMT2022}, Breiding, Michałek, Monid and Telen used %
  Newton-Okounkov bodies and Khovanskii bases to show that the generic root count of this system is $5^{N}$. %
  We will show here how the same root count can be obtained from our results. %

  Applying our modifications in \cref{def:horizontalDependenciesModification} yields:
  \allowdisplaybreaks
  \begin{align*}
    \hat{f}_{i}=&a_{1,i}w_{i,1}+a_{2,i}u_{i}+a_{3,i}v_{i}+a_{4,i}+\sum_{j\neq{i}} c_{j,i}v_{j},& \hat h_{i,1}=&w_{i,1}-u_{i}(u_{i}^{2}+v_{i}^{2}),\\
    \hat{g}_{i}=&b_{1,i}w_{i,2}+b_{2,i}u_{i}+b_{3,i}v_{i}+b_{4,i}+\sum_{j\neq{i}} d_{j,i}u_{j},& \hat h_{i,2}=&w_{i,2}-v_{i}(u_{i}^{2}+v_{i}^{2}),
  \end{align*}
  which we can reformulate to the following generating set
  \begin{equation}\label{eq:duffingOscilattors}
    \begin{aligned}
      \hat{f}_{i}=&a_{1,i}w_{i,1}+a_{2,i}u_{i}+a_{3,i}v_{i}+a_{4,i}+\sum_{j\neq{i}} c_{j,i}v_{j},& \hat h_{i}=&w_{i,1}-u_{i}(u_{i}^{2}+v_{i}^{2}),\\
      \hat{g}_{i}=&b_{1,i}w_{i,2}+b_{2,i}u_{i}+b_{3,i}v_{i}+b_{4,i}+\sum_{j\neq{i}} d_{j,i}u_{j},& \hat \rho_{i}=&v_{i}w_{i,1}-u_{i}w_{i,2}.
    \end{aligned}
  \end{equation}
  Note that there are now $4N$ equations in $4N$ variables.
  As before, the rectifiable locus is $T$, so that $\ell_{X,\eta}=\ell_{\widehat{X},\eta}$.

  We will first show that the $\trop(V(\hat h_i)_P)$'s and $\trop(V(\hat \rho_i)_P)$'s intersect transversally, which combined with \cref{prop:horizontalDependenciesRootCount} implies that the generic root count is the mixed volume of the polynomials in System~\eqref{eq:duffingOscilattors}. We then use a result by Bihan and Soprunov \cite{BihanSoprunov2019} to show that this mixed volume is $5^N$.

  To see that the aforementioned tropical hypersurfaces intersect transversally, observe that $\trop(V(\hat h_i)_P)$ consists of three maximal cells, while $\trop(V(\hat \rho_i)_P)$ consists of only one. Letting $\sigma_i$ denote a maximal cell of $\trop(V(\hat h_i)_P)$ and $\tau_i$ %
  the maximal cells of $\trop(V(\hat \rho_i)_P)$, we have
  \begin{equation*}
    \sigma_i\subseteq
    \begin{cases}
      (e_{w_{i,1}}-3e_{u_i})^\perp \text{ or}\\
      (e_{w_{i,1}}-e_{u_i}-2e_{v_i})^\perp \text{ or}\\
      (2e_{u_i}-2e_{v_i})^\perp,
    \end{cases}
    \qquad\text{and}\qquad \tau_i=(e_{v_i}+e_{w_{i,1}}-e_{u_i}+e_{w_{i,2}})^\perp.
  \end{equation*}

  It is straightforward to check that, regardless of the choice of $\sigma_i$'s, the normal vectors of $\sigma_1,\tau_1,\dots,\sigma_{N},\tau_{N}$ specified above will always be linearly independent, which in turn implies that the cells intersect transversally. This can for example be done by constructing a matrix of normal vectors, where the rows are indexed by the maximal cells and the columns are indexed by the unit vectors in the following ordering:
  \begin{center}
    \begin{tikzpicture}[add paren/.style={left delimiter={(},right delimiter={)}}]
      \matrix (m) [matrix of math nodes, row sep=1mm, column sep=3mm]
      { & & e_{w_{1,2}} & \cdots & e_{w_{N,2}} & e_{w_{1,1}}\; e_{u_1}\; e_{v_1} & \cdots & e_{w_{N,1}}\; e_{u_N}\; e_{v_N} \\
                 & &   & &   &   & & \\
        \tau_1   & & 1 & &   &   & & \phantom{1} \\
        \vdots   & &   & &   &   & & \\
        \tau_N   & &   & & 1 &   & & \phantom{1} \\
        \sigma_1 & &   & &   & 1 & & \phantom{1} \\
        \vdots   & &   & &   &   & & \\
        \sigma_N & &   & &   &   & & 1 \\ };
      \node[anchor=base west] at (m-3-3.base east) {$\ast$};
      \node[anchor=base west] (eol3) at (m-3-8.base east) {$\ast$};
      \node[anchor=base west] at (m-5-5.base east) {$\ast$};
      \node[anchor=base west] (eol5) at (m-5-8.base east) {$\ast$};
      \node[anchor=base west] at (m-6-6.base east) {$\ast$};
      \node[anchor=base west] (eol6) at (m-6-8.base east) {$\ast$};
      \node[anchor=base west] (eol8) at (m-8-8.base east) {$\ast$};
      \draw[loosely dotted] (m-3-3.south) -- (m-5-5.north);
      \draw[loosely dotted] (eol3.south) -- (eol5.north);
      \draw[loosely dotted] (m-6-6.south) -- (m-8-8.north);
      \draw[loosely dotted] (eol6.south) -- (eol8.north);
      \draw[dashed] (m-3-3.south west) rectangle (eol3.north east);
      \draw[dashed] (m-5-5.south west) rectangle (eol5.north east);
      \draw[dashed] (m-6-6.south west) rectangle (eol6.north east);
      \draw[dashed] (m-8-8.south west) rectangle (eol8.north east);
      \node[fit=(m-3-3) (eol8), add paren, inner sep=0pt] (submatrix) {};
    \end{tikzpicture}
  \end{center}
  Regardless of the choice of $\sigma_i$'s, the matrix of normal vectors will always be in row-echelon form, and hence of full rank.

  To show that the mixed volume of the Newton polytopes of the polynomials in Equation~\eqref{eq:duffingOscilattors} is $5^N$, recall \cite[Proposition 3.2]{BihanSoprunov2019} which states that for polytopes $Q_1,\dots,Q_n$ and $P_1\subseteq Q_1$ in $\RR^n$:
  \begin{align}
      &\MV(P_1,Q_2,\dots,Q_n) = \MV(Q_1,Q_2,\dots,Q_n) \nonumber \\
      &\quad \Longleftrightarrow \quad \forall u\in\RR^n \text{ with } \MV(Q_2^u,\dots,Q_n^u)>0:\; P_1\cap Q_1^u\neq\emptyset. \label{eq:BihanSoprunov}
  \end{align}
  Here $P^u$ denotes the face of $P$ minimizing scalar product by $u$ and the polytopes in $\{Q_2^u,\dots,Q_n^u\}$ are considered to be polytopes of $u^\perp\cong\RR^{n-1}$. We will use this result to show that the Newton polytopes of $\hat f_i$ and $\hat g_i$ can be replaced by the Newton polytopes of
  \begin{equation*}
    a_{1,i}w_{i,1}+a_{2,i}u_{i}+a_{3,i}v_{i}+a_{4,i}\qquad \text{and}\qquad
    b_{1,i}w_{i,2}+b_{2,i}u_{i}+b_{3,i}v_{i}+b_{4,i}
  \end{equation*}
  without changing the mixed volume. A quick computation then reveals the mixed volume to be $5^N$.

  Let $(\dutchcal{u},\dutchcal{v},\dutchcal{w})$ be a vector whose minimum on the Newton polytope of $\hat f_i$ is uniquely attained at a vertex corresponding to a monomial in $\sum_{j\neq i}c_{i,j}v_j$. Here, $\dutchcal{u}_i$'s, $\dutchcal{v}_i$'s, $\dutchcal{w}_{i,j}$'s represent weights on the variables $u_i$'s, $v_i$'s, $w_{i,j}$'s. Without loss of generality, we may assume that $i=1$ and that the monomial is $v_2$, so that the assumption implies:
  \begin{equation*}\label{eq:f1assumption}
    \dcv_2<\dcw_{1,1},\quad \dcv_2<\dcu_1,\quad \dcv_2<0,\quad \text{and} \quad \dcv_2<\dcv_j \text{ for } j\neq 2.
  \end{equation*}
  We will show that the mixed volume in Expression \eqref{eq:BihanSoprunov} is zero. This is done by assuming that none of the polytopes in it are vertices and proving that it either leads to a contradiction or to mixed volume $0$.

    In the following, %
    we will use ($f$) as a shorthand for a tropical equation derived from polynomial $f$. In other words, ($f$) means that the face of the Newton polytope of $f$ minimizing $(\dutchcal{u},\dutchcal{v},\dutchcal{w})$ is not a vertex, or equivalently that the minimum of $\trop(f)(\dutchcal{u},\dutchcal{v},\dutchcal{w})$ is attained at least twice.
  We distinguish between three cases:
  \begin{description}[leftmargin=5mm]
  \item[$\dutchcal{u}_2<\dutchcal{v}_2$] From $(\hat h_2)$ and $(\hat f_2)$ we obtain $\dcw_{2,1}=3\dcu_{2}$ and $\dcw_{2,1}=\dcu_2$, respectively. Together, they imply $\dcu_2=0$, which contradicts $\dcu_2<\dcv_2<0$.
  \item[$\dutchcal{u}_2>\dutchcal{v}_2$] From $(\hat \rho_2)$ we obtain $\dcw_{2,2}=3\dcv_2<\dcv_2$, the last inequality simply following from $\dcv_2<0$. By $(\hat g_2)$ we then get that $\dcw_{2,2}=\dcu_{j}$ for some $j\neq 2$. We therefore have $\dcu_{j}<\dcv_2$. Considering $(\hat f_j)$, we thus obtain $\dcw_{j,1}=\dcu_j<\dcv_j$. From $\dcu_j<\dcv_j$ and $(\hat h_j)$ we get $\dcw_{j,1}=3\dcu_j$, which together with the previous $\dcw_{j,1}=\dcu_j$ implies $\dcu_j=0$. This contradicts $\dcu_j<\dcv_2<0$.
  \item[$\dutchcal{u}_2=\dutchcal{v}_2$] From $(\hat \rho_2)$ and $(\hat h_2)$ we obtain $\dcw_{2,1}=\dcw_{2,2}$ and $\dcw_{2,1}\geq 3\dcu_2=\dcu_2+2\dcv_2$ respectively. We again distinguish between three cases:
    \begin{description}[leftmargin=3mm]
    \item[$\dcu_2=\dcv_2>\dcw_{2,1}=\dcw_{2,2}$] The minimum of $\trop(f_2)(\dutchcal{u},\dutchcal{v},\dutchcal{w})$ is attained uniquely at $\dcw_{2,1}$, hence the mixed volume is $0$.
    \item[$\dcu_2=\dcv_2<\dcw_{2,1}=\dcw_{2,2}$] Suppose that $\dcv_2<\dcu_i$ for all $i\neq 2$. Then %
      the minimum of both tropical polynomials $\trop(f_2)$ and $\trop(g_2)$ evaluated at $(\dutchcal{u},\dutchcal{v},\dutchcal{w})$ is attained at the monomials $u_2$ and $v_2$, hence the mixed volume is $0$.

      Suppose that $\dcv_2\geq\dcu_i$ for some $i\neq 2$. Then $\dcu_i\leq \dcv_2<\dcv_i$ and from $(\hat h_i)$ we obtain $\dcw_{i,1}=3\dcu_{i}<\dcu_{i}$. This implies that the minimum of $\trop(\hat f_i)$ evaluated at $(\dutchcal{u},\dutchcal{v},\dutchcal{w})$ is uniquely attained at the monomial $w_{i,1}$, contradicting $(\hat f_i)$.

    \item[$\dcu_2=\dcv_2=\dcw_{2,1}=\dcw_{2,2}$] From $(\hat h_2)$ we obtain $\dcw_{2,1}\geq 3\dcu_2$. Combined with the assumptions $\dcu_2=\dcw_{2,1}$ and $\dcw_{2,1}=\dcv_2<0$, this implies $\dcw_{2,1}>3\dcu_2$. Hence the minimum of $\trop(\hat h_2)(\dutchcal{u},\dutchcal{v},\dutchcal{w})$ is attained uniquely at $u_2^3$ and $u_2v_2^2$. Our assumptions on $\hat f_1$ imply that the minimum in $\trop(\hat f_2)(\dcu,\dcv,\dcw)$ is attained at $w_{2,1}$, $u_2$, and $v_2$. We again distinguish between three cases:

    If the minimum in $\trop(\hat{g}_{2})(\dcu,\dcv,\dcw)$ is attained at $v_2$ and $u_{j}$ for $j\neq{2}$, then from our initial assumption, we obtain $\dcu_{j}<\dcv_{j}$ and thus $\dcw_{j,1}=3\dcu_{j}<\dcu_{j}$. But then the minimum in $\trop(\hat f_j)(\dcu,\dcv,\dcw)$ is uniquely attained at $w_{j,1}$, contradicting $(\hat f_j)$.

    If the minimum in $\trop(\hat{g}_{2})(\dcu,\dcv,\dcw)$ is attained at $u_{j}$ and $u_{k}$ for $j,k\neq{2}$ and $j\neq k$, then the minimum in $\trop(\hat f_j)(\dcu,\dcv,\dcw)$ is attained at $w_{j,1}$ and $u_{j}$. As before, this implies that $\dcw_{j,1}=3\dcu_{j}=0$, which contradicts $(\hat f_j)$.

    The remaining case is where the minimum in $\trop(\hat{g}_{2})(\dcu,\dcv,\dcw)$ is attained at $w_{2,2}$, $u_2$, and $v_2$. But then the mixed volume of the Newton polytopes of  $\initial(h_{i,1})=u_{2}(u_{2}^2+v_{2}^2)$, $\initial(\rho_{2})$, $\initial(f_{2})$ and  $\initial(g_{2})$ is zero, since they contain a common lineality space.
 \end{description}
 \end{description}
\end{example}
 
\renewcommand*{\bibfont}{\small}
\printbibliography

\end{document}